\newtheorem{theorem}{Theorem}[section]
\newtheorem{corollary}[theorem]{Corollary}
\newtheorem{lemma}[theorem]{Lemma}
\newtheorem{remark}[theorem]{Remark}
\newtheorem{assumption}{Assumption}
\newcommand{\dx}{\dot{x}}
\newcommand{\ddx}{\ddot{x}}
\newcommand{\gf}{\nabla f}
\newcommand{\Hf}{\nabla^2 f}
\newcommand{\N}{\mathbb{N}}
\newcommand{\R}{\mathbb{R}}
\newcommand{\eps}{\varepsilon}
\newcommand{\norm}[1]{\Vert #1\Vert}
\newcommand*\diff{\mathop{}\!\mathrm{d}}
\newcommand{\xlm}{x_{LM}}
\newcommand{\dxlm}{\dot{x}_{LM}}
\title{Continuous Newton-like Methods featuring Inertia and Variable Mass}
\author{\textbf{Camille Castera}$^\ast$\\
	Department of Mathematics\\
	University of Tübingen\\
	Germany
	\and
	\textbf{Hedy Attouch}\\
	IMAG \\
	Universit\'e Montpellier\\
	CNRS, France
	\and
	\textbf{Jalal Fadili}\\
	ENSICAEN
	\\ Normandie Universit\'e
	\\CNRS, GREYC, France
	\vspace{0.2cm}
	\and
	\textbf{Peter Ochs}\\
	Departments of Computer Science and Mathematics\\
	Saarland University\\
	Germany
}
\begin{document}

	\maketitle

	\vspace{-1cm}
	\begin{center}
	\textit{Dedicated to the memory of Hedy Attouch, outstanding mathematician and beloved collaborator.}
	\end{center}
	\vspace{0.3cm}

	\begin{abstract}
		We introduce a new dynamical system, at the interface between second-order dynamics with inertia and Newton's method.
		This system extends the class of inertial Newton-like dynamics by featuring a time-dependent parameter in front of the acceleration, called \emph{variable mass}.
		For strongly convex optimization, we provide guarantees on how the Newtonian and inertial behaviors of the system can be non-asymptotically controlled by means of this variable mass.
		A connection with the Levenberg--Marquardt (or regularized Newton's) method is also made.
		We then show the effect of the variable mass on the asymptotic rate of convergence of the dynamics, and in particular, how it can turn the latter into an accelerated Newton method.
		We provide numerical experiments supporting our findings. This work represents a significant step towards designing new algorithms that benefit from the best of both first- and second-order optimization methods.
	\end{abstract}

	\renewcommand*{\thefootnote}{$^\ast$}
	\footnotetext[1]{Corresponding author: \texttt{camille.castera@protonmail.com}\\ \textit{Published in SIAM Journal on Optimization 34(1):251–277}}
	\renewcommand*{\thefootnote}{\arabic{footnote}}
	\setcounter{footnote}{0} 

	\section{Introduction}
	\subsection{Problem Statement}\label{sec::pbstatement}
	A major challenge in modern unconstrained convex optimization consists in building fast algorithms while maintaining low computational cost and memory footprint.
	This plays a central role in many key applications such as large-scale machine learning problems or data processing. The problems we are aiming to study are of the form
	\begin{equation*}
		\min_{x\in\R^n} f(x).
	\end{equation*}
	Large values of $n$ demand for algorithms at the interface of first- and second-order optimization.
	Limited computational capabilities explain why gradient-based (first-order) algorithms remain prominent in practice.
	Unfortunately, they often require many iterations, which is true even for the provably best algorithms for certain classes of optimization problems; for example that of convex and strongly convex functions with Lipschitz continuous gradient \citep{polyak1964some,nesterov1983method,nesterov2003introductory}. On the other hand, algorithms using second-order information (the Hessian of $f$)---with Newton's method as prototype---adapt locally to the geometry of the objective, allowing them to progress much faster towards a solution. However, each iteration comes with high computational and memory costs, which highlights a challenging trade-off.
	It is therefore essential to develop algorithms that take the best of both worlds.
	Several quasi-Newton algorithms partly address this issue, for example BFGS methods \citep{broyden1970convergence,fletcher1970new,goldfarb1970family,shanno1970conditioning,liu1989limited}, yet, in very large-scale applications, first-order algorithms often remain the preferred choice.

	In order to reach a new level of efficiency, deep insights into the mechanism and relations between algorithms are required. To that aim, an insightful approach is to see optimization algorithms as discretization of ordinary differential equations (ODEs): for small-enough step-sizes, iterates can be modeled by a continuous-time trajectory \citep{ljung1977analysis,benaim2005stochastic}.
	Obtaining a fast algorithm following this strategy depends on two ingredients: choosing an ODE for which rapid convergence to a solution can be proved, and discretizing it with an appropriate scheme that preserves the favorable properties of the ODE.

	Both steps are highly challenging, our work focuses on the ODE matter. We study the following second-order dynamical system in a general setting:
	\begin{equation}\tag{VM-DIN-AVD}\label{eq::VMDINAVD}
		\varepsilon(t) \ddot{x}(t) + \alpha(t) \dot x(t) + \beta \Hf(x(t))\dot{x}(t)+\gf(x(t))=0,
		\quad t\geq 0,
	\end{equation}
	where $f\colon \R^n\to\R$ is a smooth function, with gradient $\gf$ and Hessian $\Hf$ defined on $\R^n$ equipped with scalar product $\langle\cdot,\cdot\rangle$, and induced norm $\norm{\cdot}$.
	The system is controlled by two functions $\eps,\alpha\colon\R_+\to\R_+$ (where $\R_+=[0,+\infty[$) and a parameter $\beta> 0$ that define the type of dynamics that drives the trajectory (or solution) $x\colon\R_+\to\R^n$, whose first- and second-order derivatives are denoted $\dx$ and $\ddx$ respectively.
	We call the above dynamics \eqref{eq::VMDINAVD}, which stands for ``Variable Mass Dynamical Inertial Newton-like system with Asymptotically Vanishing Damping'' since it generalizes a broad class of ODEs whose original member is DIN \citep{alvarez2002second}, where $\eps$ and $\alpha$ were constant. DIN was then extended to the case of non-constant \emph{asymptotically vanishing dampings} (AVD) $\alpha$ \citep{attouch2016fast}. In this work we introduce the non-constant parameter $\eps$ called \emph{variable mass} (VM) in front of the acceleration $\ddx$, in the same way that $\alpha$ is called (viscous) \emph{damping} by analogy with classical mechanics.
	A key feature of these ODEs, that positions them at the interface of first- and second-order optimization, is that they possess equivalent forms involving only $\nabla f$ but not $\nabla^2f$, significantly reducing computational costs, hence enabling the design of practical algorithms, see e.g., \cite{castera2021inertial,attouch2020first,chen2019first}.
	The key idea behind this is the relation $\Hf(x(t))\dx(t) = \frac{\diff}{\diff t}\gf(x(t))$, see Section~\ref{sec::prelim} for an equivalent formulation of \eqref{eq::VMDINAVD} exploiting this.

	This paper emphasizes the relation between \eqref{eq::VMDINAVD} and special cases.
	Indeed, taking $\eps=\alpha=0$, one obtains\footnote{\ref{eq::CN} is usually considered with $\beta=1$, we put $\beta$ in the system to ease the discussions below.} the Continuous Newton (CN) method \citep{gavurin1958nonlinear}
	\begin{equation}\tag{CN}\label{eq::CN}
		\beta \Hf(x_N(t))\dot{x}_N(t)+\gf(x_N(t))=0, \quad t\geq 0,
	\end{equation}
	known notably for being invariant to affine transformations and yielding fast vanishing of the gradient (see Section~\ref{sec::Control}).
	In fact, this observation shows that \eqref{eq::VMDINAVD} is a singular perturbation of \eqref{eq::CN}, which also justifies the terminology ``Newton-like''  in DIN.
	When $\alpha\neq 0$ but $\eps=0$, we recover the Levenberg--Marquardt (LM) method,
	\begin{equation}\tag{LM}\label{eq::LM}
		\alpha(t) \dot x_{LM}(t) + \beta \Hf(x_{LM}(t))\dot{x}_{LM}(t)+\gf(x_{LM}(t))=0, \quad t\geq 0,
	\end{equation}
	also known as regularized Newton method since it stabilizes \eqref{eq::CN}.
	In the rest of the paper, the solutions of \eqref{eq::CN} and \eqref{eq::LM} will always be denoted by $x_N$ and $x_{LM}$ respectively. Since the introduction of DIN, it is known (see \cite{alvarez2002second}) that for $\alpha=0$, $\beta=1$, and $\eps$ constant and small, \eqref{eq::VMDINAVD} is a ``perturbed'' Newton method since the distance between the solutions of \eqref{eq::VMDINAVD} and \eqref{eq::CN}  is at most proportional to $\sqrt{\eps}$ at all times.
	Yet, despite the benefits of this class of ODEs, such as stabilization properties \citep{attouch2016fast,attouch2020first}, no improvement\footnote{DIN-like systems were thought to yield faster vanishing of the gradient compared to inertial first-order dynamics, until recently \citep{attouch2022ravine}.} of the rate of convergence (in values) has been shown compared to inertial first-order dynamics \citep{polyak1964some,su2014differential}. This raises the question:
	\begin{center}
		``\emph{are these ODEs really of Newton type?}'',
	\end{center}
	which is crucial in view of designing faster algorithms from them.

	\begin{figure}[t]
		\centering
		\begin{minipage}{0.53\linewidth}
			\centering
			\hspace{0.05\linewidth}\includegraphics[width=.8\linewidth]{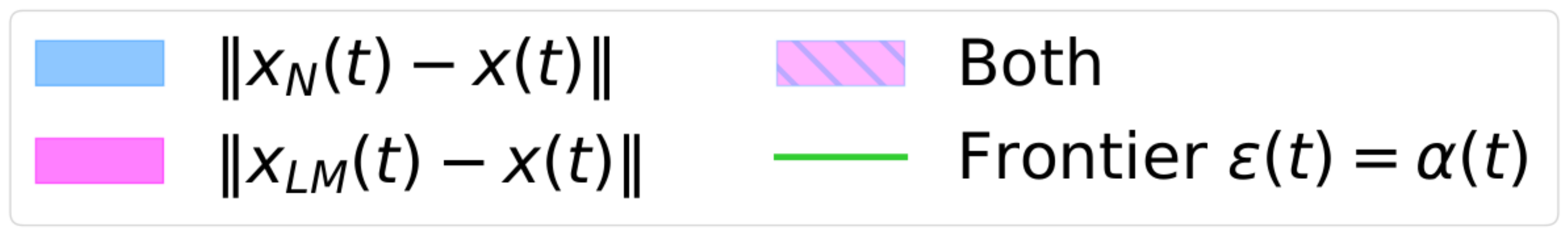}
			\includegraphics[width=\linewidth]{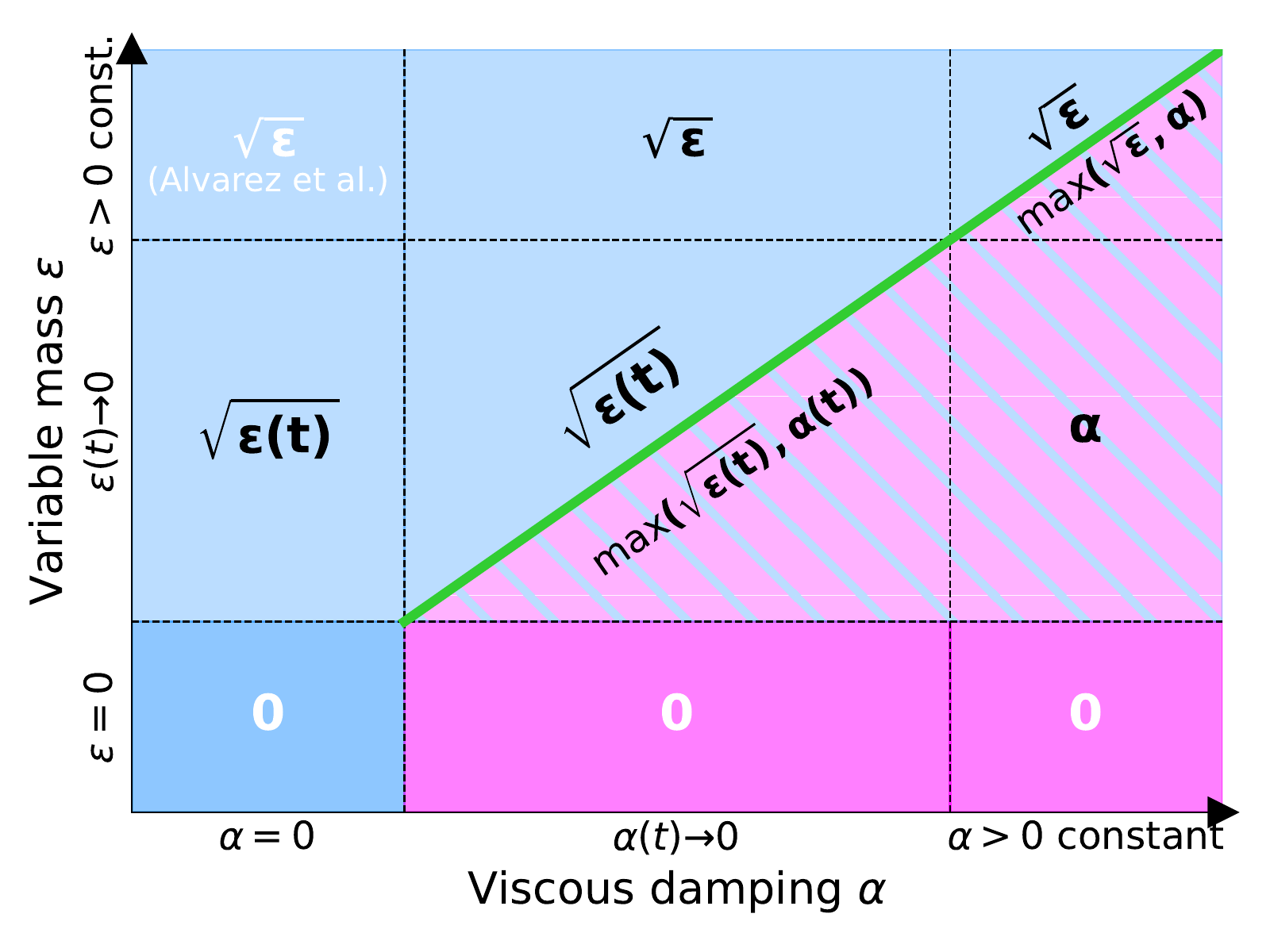}
		\end{minipage}
		\begin{minipage}{0.46\linewidth}
			\centering
			\includegraphics[width=\linewidth]{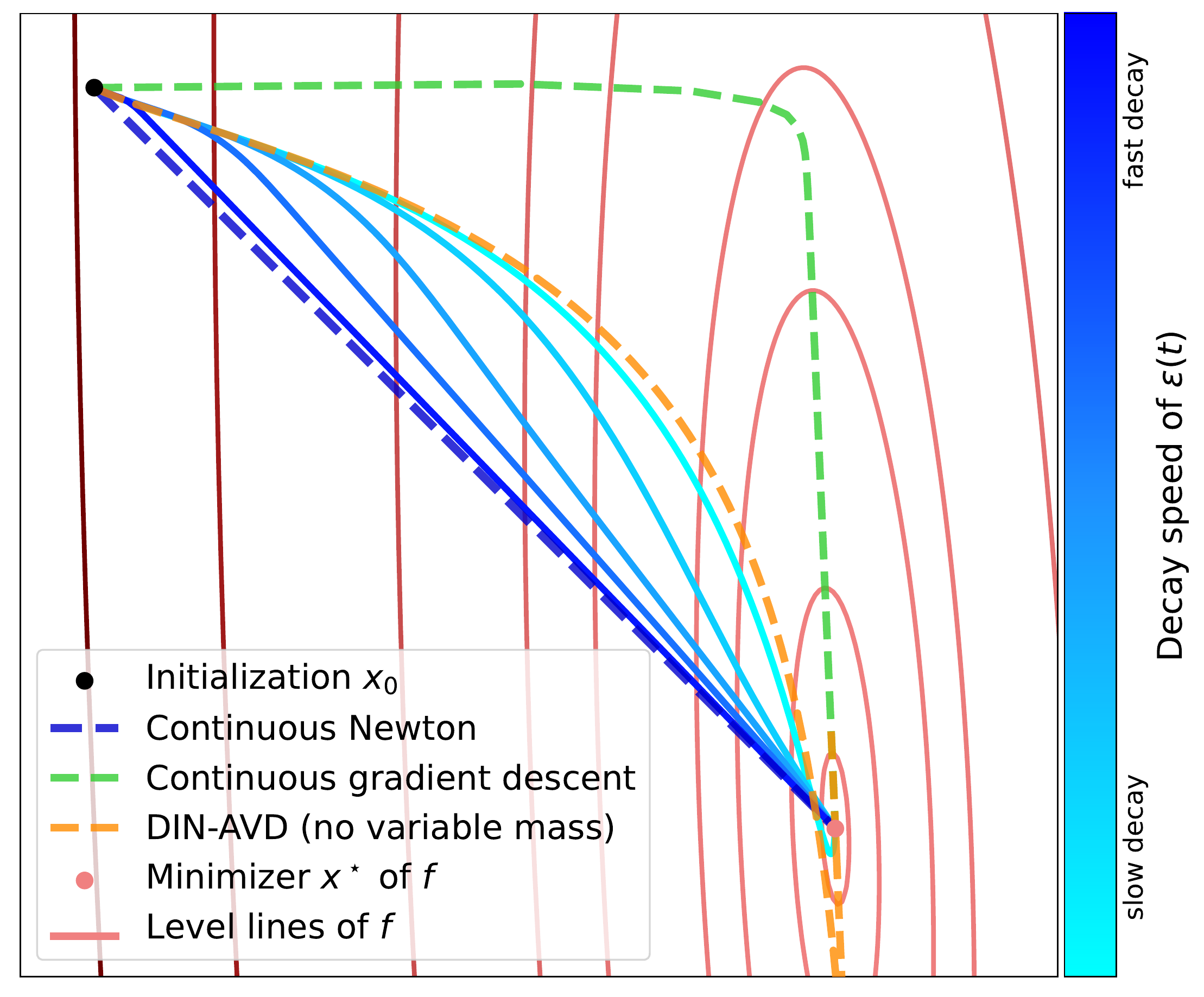}
		\end{minipage}
		\caption{Left: phase diagram on distances from \eqref{eq::VMDINAVD} to \eqref{eq::CN} and \eqref{eq::LM} (see Section~\ref{sec::Control}). The color of each patch indicates which distance is considered, and the scaling of a corresponding upper-bound on this distance is written (in white for prior work, in black for our contributions).
			The green line separates the cases $\eps\geq \alpha$ (above) and $\eps\leq \alpha$ (below).
			Right: 2D illustration of the trajectories of \eqref{eq::VMDINAVD} for several choices of $\eps$ on a quadratic function. Fast-vanishing $\eps(t)$ (dark-blue solid curves) bring solutions of \eqref{eq::VMDINAVD} close to that of \eqref{eq::CN}, making them, more robust to bad conditioning compared to first-order dynamics (e.g., gradient descent).
			\label{fig::phasediagandquad}}
	\end{figure}

	\subsection{Main Contributions} We show that the answer to this question is partially positive, and closely related to the choices of $\eps$ and $\alpha$.
	We provide general results on the role played by these two control parameters and how they can be chosen to control \eqref{eq::VMDINAVD}, and make it close to \eqref{eq::CN} \emph{for all time}, as illustrated on the right-hand side of Figure~\ref{fig::phasediagandquad}, but also to obtain fast convergence. This represents a first step towards building new fast practical algorithms. Our main contributions are the following:
	\\
	--\ We provide a first-order equivalent formulation of \eqref{eq::VMDINAVD}, and show the existence and uniqueness of the solutions of \eqref{eq::VMDINAVD} under mild assumptions.
	\\
	--\ We generalize the perturbed Newtonian property discussed above to non-constant and possibly vanishing variable masses $\eps$, and ``not too large'' positive dampings $\alpha$, and derive bounds that (formally) take the form $\norm{x(t)-x_N(t)}=O(\sqrt{\eps(t)})$. We then extend these results to larger $\alpha$ and make the connection between \eqref{eq::VMDINAVD} and \eqref{eq::LM}. This contribution is summarized in the phase diagram of Figure~\ref{fig::phasediagandquad}.
	\\
	--\ Using quadratic functions as a model for strongly convex functions, we shed light on techniques to efficiently approximate solutions of \eqref{eq::VMDINAVD}.
	We then show how $\eps$ and $\alpha$ affect the speed of convergence. Depending on their setting, the solutions of \eqref{eq::VMDINAVD} may either converge as fast as that of \eqref{eq::CN}, \emph{faster}, or rather have a \eqref{eq::LM} nature, as summarized in Table~\ref{tabl::speedsummary}.
	\\
	--\ We provide numerical experiments supporting our theoretical findings.

	\begin{table}[t]
		\small
		\centering\setlength\extrarowheight{1.5pt}
		\caption{Informal summary of Section~\ref{sec::Quads}. Comparison of \eqref{eq::VMDINAVD} with other dynamics\label{tabl::speedsummary}}
		\begin{tabular}{@{\extracolsep{2pt}}*{2}{c@{\enspace}c@{\enspace}c}}
			\toprule
			\multicolumn{2}{c}{Parameters of \eqref{eq::VMDINAVD}} &\quad  & \multicolumn{2}{c}{Speed of convergence}\\
			\cmidrule{1-2} \cmidrule{4-5}
			{Dominant parameter}& {Integrability in $+\infty$} & & w.r.t.\ \eqref{eq::CN} & w.r.t.\ \eqref{eq::LM} \\
			\midrule
			\multirow{2}{*}{variable mass $\eps$}
			& yes & & as fast & as fast   \\
			& no & & \color{Green3}faster & \color{Green3}faster \\
			\multirow{2}{*}{viscous damping $\alpha$}
			& yes & & as fast & \multirowcell{2}{\color{Seashell4}only depends\\ \color{Seashell4}on $\eps$}  \\
			& no & & \color{IndianRed1}slower & \\
			\bottomrule
		\end{tabular}
	\end{table}

	\subsection{Related work}
	The importance and challenges of finding systems that approximate and preserve the benefits of \eqref{eq::CN} were highlighted by \cite{alvarez1998dynamical}.
	The system \eqref{eq::VMDINAVD} belongs to the class of inertial systems with viscous and geometric (``Hessian-driven'') dampings, initially introduced with constant $\eps = 1$ and constant $\alpha$ in \cite{alvarez2002second} and called DIN (for Dynamical Inertial Newton-like system). Except in a few cases \citep{castera2021inertial,castera2021saddle}, most of the follow-up work then considered extensions of DIN with non-constant AVD $\alpha$, with in particular the DIN-AVD system with $\alpha(t)=\alpha_0/t$ as introduced in \cite{attouch2016fast}. The reason for this popular choice for $\alpha$ is its link with Nesterov's method \citep{su2014differential}. Non-constant choices for $\beta$ have been considered \citep{attouch2020first,alecsa2021extension,lin2022control,attouch2020fast}. We keep it constant, and rather focus on non-constant $\eps$, unlike prior work that used constant $\eps=1$.
	The mass $\eps$ was only considered in the original work \citep{alvarez2002second}, but only for fixed $\eps$, $\beta=1$ and constant $\alpha=0$.
	\ref{eq::VMDINAVD} is however related to the IGS system considered in \cite{attouch2020fast} as it is actually equivalent to the latter after dividing both sides of \eqref{eq::VMDINAVD} by $\eps(t)$. Our approach---which consists in studying the connections with other second-order dynamics as $\eps$ vanishes asymptotically---is however different from the one followed in \citep{attouch2020fast}, which is of independent interest.
	The literature on DIN is rich, let us mention further connections with Nesterov's method \citep{shi2021understanding,alecsa2021extension}, extensions with Tikhonov regularization \citep{boct2021tikhonov} and closed-loop damping \citep{attouch2022fast,lin2022control}. The non-smooth and possibly non-convex cases have been considered in  \cite{attouch2020newton,attouch2021continuous,castera2021inertial}. Finally, avoidance of strict saddle points in smooth non-convex optimization has been shown in \cite{castera2021saddle}.

	The influence of the damping $\alpha$ on the \eqref{eq::LM} dynamics has been studied in \cite{attouch2011continuous,attouch2013global}. Interestingly, the conditions enforced on $\alpha$ in these papers (formally a sub-exponential decay) are very similar to those we make on $\eps$ and $\alpha$ for \eqref{eq::VMDINAVD} (see Assumptions~\ref{ass:largeepsilon} and~\ref{ass:Onlysubexp}). The \eqref{eq::LM} dynamics is also related to the system in \citep{attouch2015dynamic}.

	Regarding the second part of our analysis, which deals with the case where $f$ is quadratic, a recent work \citep{attouch2020first} provided closed-form solutions to \eqref{eq::VMDINAVD} for $\eps\equiv 1$ and special choices of $\alpha$. Our work rather deals with approximate solutions which allows considering a wide class of functions $\eps$ and $\alpha$. We rely on the Liouville--Green (LG) method~\citep{Liouville1836,green1838} presented in Section~\ref{sec::Quads}. Generalizations of LG are also often referred to as WKB methods~\citep{wentzel1926verallgemeinerung,kramers1926wellenmechanik,brillouin1926remarques} and seem to be mostly used in physics so far. To the best of the authors' knowledge, the current work seem to be one of the first to use the LG method in optimization, and the first for DIN-like systems.

	\subsection{Organization}
	The paper is organized as follows. We discuss the existence of solutions in Section~\ref{sec::prelim}. Our main results, from a non-asymptotic control perspective, are then presented in Section~\ref{sec::Control}.
	An analysis of the role played asymptotically by $\eps$ and $\alpha$ is then carried out on quadratic functions in Section~\ref{sec::Quads}. Finally, numerical experiments are presented in Section~\ref{sec::exp}, and some conclusions are then drawn.

	\section{Existence and Uniqueness of Solutions}\label{sec::prelim}
	We always assume the following on the system \eqref{eq::VMDINAVD} and the functions $f,\eps,\alpha$ defined in the introduction.
		\begin{assumption}\label{ass:general}
			\textbullet\ $f: \R^n \to \R$ is convex, twice continuously differentiable, coercive, and strongly convex on bounded subsets of $\R^n$.
			We fix $x_0$, $\dot{x}_0\in\R^n$, such that, unless stated otherwise, \eqref{eq::VMDINAVD} has initial condition $(x(0),\dx(0))=(x_0,\dx_0)$, and \eqref{eq::CN} and \eqref{eq::LM} have initial conditions $x_{N}(0)=x_{LM}(0)=x_0$.
\\
			\textbullet\ $\alpha: \R_+ \to \R$ is non-increasing, non-negative and differentiable. $\eps: \R_+ \to \R$ is non-increasing, positive, and twice differentiable with bounded second derivative.
			We fix initial values: $\eps(0)=\eps_0>0$, $\eps'(0)=\eps'_0\leq0$ and $\alpha(0)=\alpha_0\geq 0$.
		\end{assumption}

	\begin{theorem}\label{thm::existence}
			Under Assumption~\ref{ass:general} there exists a unique global solution $x:\R_+\to \R^n$ to \eqref{eq::VMDINAVD}.
	\end{theorem}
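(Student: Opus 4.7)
The plan is to reduce \eqref{eq::VMDINAVD} to a first-order Cauchy problem on $\R^{2n}$, apply the Cauchy--Lipschitz theorem for local existence and uniqueness, and then close the argument with an energy estimate that precludes finite-time blow-up.

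First, observe that under Assumption~\ref{ass:general} the function $\eps$ is continuous, strictly positive, and non-increasing on $\R_+$, so on any compact interval $[0,T]$ it is bounded below by $\eps(T)>0$. Setting $y=(x,v)\in\R^{2n}$ with $v=\dx$ and dividing \eqref{eq::VMDINAVD} by $\eps(t)$ yields the equivalent first-order system
\begin{equation*}
\dot y(t) = F(t,y(t)), \qquad F(t,(x,v)) = \Bigl(v,\; -\tfrac{1}{\eps(t)}\bigl[\alpha(t)v + \beta\,\Hf(x)v + \gf(x)\bigr]\Bigr).
\end{equation*}
Since $f\in C^2$ and $\alpha,\eps$ are continuous with $\eps>0$, the vector field $F$ is continuous in $t$ and locally Lipschitz in $y$ on $\R_+\times\R^{2n}$. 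The Cauchy--Lipschitz theorem then provides a unique maximal solution $y\colon[0,T_{\max})\to\R^{2n}$ with the prescribed initial data.

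Next, to show $T_{\max}=+\infty$, I would introduce the natural mechanical energy
\begin{equation*}
E(t) = f(x(t)) - \min f + \tfrac{1}{2}\eps(t)\norm{\dx(t)}^2,
\end{equation*}
which is non-negative since $f$ is coercive and convex, hence admits a minimum. Differentiating and substituting $\eps(t)\ddx(t)$ from \eqref{eq::VMDINAVD} produces, after the cancellation of $\langle \gf(x),\dx\rangle$,
\begin{equation*}
\dot E(t) = \Bigl(\tfrac{1}{2}\eps'(t)-\alpha(t)\Bigr)\norm{\dx(t)}^2 - \beta\,\langle \Hf(x(t))\dx(t),\dx(t)\rangle \leq 0,
\end{equation*}
where non-positivity uses $\eps'\leq 0$, $\alpha\geq 0$, and $\Hf\succeq 0$ by convexity of $f$. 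Hence $E(t)\leq E(0)$ on $[0,T_{\max})$. Coercivity of $f$ then confines $x(t)$ to a fixed (compact) sublevel set, and on any $[0,T]\subset[0,T_{\max})$ one has $\norm{\dx(t)}^2 \leq 2E(0)/\eps(T)$, so $\dx$ is also bounded on $[0,T]$. Consequently $y(\cdot)$ stays in a compact set on every bounded time interval, and the standard maximality criterion for ODEs forces $T_{\max}=+\infty$.

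The main subtlety is the variable mass $\eps(t)$ appearing in front of $\ddx$: dividing by $\eps(t)$ is only legitimate because $\eps$ is pointwise strictly positive, and the dependence of the velocity bound on $1/\eps(T)$ highlights that although $\eps$ may vanish \emph{asymptotically} (the regime of primary interest in this paper), it cannot vanish in finite time. This is exactly what Assumption~\ref{ass:general} secures. The stronger regularity of $\eps$ (twice differentiable with bounded second derivative), though assumed globally, will only be needed for the quantitative estimates in later sections and plays no role in this basic existence/uniqueness step.
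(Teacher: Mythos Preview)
Your energy argument for global existence is essentially the paper's own Lyapunov computation (Lemma~\ref{lem::LyapLargeEps}), and your bound $\|\dx(t)\|^2\le 2E(0)/\eps(T)$ on $[0,T]$ correctly exploits that $\eps$ stays positive on compact intervals. The gap is in the local step.

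In the phase-space reduction you write $F(t,(x,v))$ with a term $\Hf(x)v$, and then claim that $f\in C^2$ makes $F$ locally Lipschitz in $(x,v)$. That inference fails: $C^2$ regularity only gives continuity of $\Hf$, and the map $(x,v)\mapsto\Hf(x)v$ is locally Lipschitz in its first argument only if $\Hf$ itself is locally Lipschitz, i.e.\ if $f\in C^{2,1}$ (or $C^3$). For instance, $f(x)=|x|^{5/2}$ on $\R$ is $C^2$ with $f''(x)=\tfrac{15}{4}|x|^{1/2}$, which is not Lipschitz at $0$, so your $F$ is merely continuous there and Picard--Lindel\"of does not apply. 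Under Assumption~\ref{ass:general} alone you therefore obtain existence (Peano) but not uniqueness.

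The paper sidesteps this by the reformulation \eqref{eq::gVMDINAVD}: using $\Hf(x(t))\dx(t)=\tfrac{\diff}{\diff t}\gf(x(t))$ and introducing an integrated auxiliary variable, the resulting first-order vector field involves only $\gf$, which is $C^1$ (hence locally Lipschitz) under $f\in C^2$. This is precisely the point of Remark~\ref{rem::nonsmooth}: the phase-space formulation would require Lipschitz continuity of $\Hf$, whereas the paper's formulation does not. Your remark that the extra regularity on $\eps$ ``plays no role'' is also slightly off: in the paper's route, boundedness of $\eps''$ enters through $\nu'$ when showing the auxiliary variable stays bounded (Appendix~\ref{app::localisglobal}); in your route it would indeed be unnecessary, but your route needs the stronger assumption on $f$ instead.
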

	The proof relies on the Cauchy--Lipschitz Theorem, we sketch the main elements. We reformulate \eqref{eq::VMDINAVD} into a first-order (in time) system by introducing an auxiliary variable $y:\R_{+}\to\R^n$. Notably, our reformulation does not involve $\nabla^2 f$, in the same fashion as \cite{alvarez2002second,attouch2016fast}.
	For all $t$, defining $\nu(t) = \alpha(t)-\eps'(t) - \frac 1 \beta \eps(t)$, we show in Appendix~\ref{app:FOS} that \eqref{eq::VMDINAVD} is equivalent to
	\begin{equation}\label{eq::gVMDINAVD}\tag{gVM-DIN-AVD}
		\begin{cases}
			\eps(t)\dx(t) + \beta\gf(x(t)) + \nu(t)x(t) + y(t)&= 0
			\\\dot y(t)+ \nu'(t)x(t) + \frac{\nu(t)}{\beta} x(t) + \frac 1 \beta y(t) &= 0
		\end{cases}
	\end{equation}
	with initial conditions $(x(0),y(0)) = \left(x_0,-\eps_0\dx_0 - \beta\gf(x_0) - (\alpha_0-\eps'_0-\frac{1}{\beta}\eps_0)x_0\right)$.
	One can notice that in the special case where $\eps$ is taken constant and equal to $1$ (that is when \eqref{eq::VMDINAVD} is simply the DIN-AVD system \citep{attouch2016fast}), we recover the same first-order formulation as that in \cite{attouch2016fast}.
	For all $t\geq0$ and $(u,v)\in\R^n\times\R^n$, define the mapping
	$ G\left(t,(u,v)\right) = \begin{pmatrix}
		\frac{1}{\eps(t)}(-\beta\gf(u)-\nu(t)u-v)
		\\ -\nu'(t)u - \frac{\nu(t)}{\beta}u - \frac{1}{\beta}v
	\end{pmatrix}, $
	so that \eqref{eq::gVMDINAVD} rewrites $(\dx(t),\dot y(t)) = G\left(t,(x(t),y(t))\right)$ for all $t\geq 0$. Since $f$ is twice continuously differentiable, one can see that $G$ is continuously differentiable w.r.t.\ its second argument $(u,v)$. Consequently $G$ is locally Lipschitz continuous w.r.t.\ $(u,v)$ and by the Cauchy--Lipschitz Theorem, for each initial condition, there exists a unique local solution to \eqref{eq::gVMDINAVD} and thus to \eqref{eq::VMDINAVD}. We then show that this solution is actually global (in Appendix~\ref{app::localisglobal}) by proving the boundedness of $(x,y)$.

	We omit the existence and uniqueness of the solutions of \eqref{eq::CN} and \eqref{eq::LM} since these are standard results, obtained with similar arguments. We conclude with the following important remark.
		\begin{remark}\label{rem::nonsmooth}
			Thanks to the first-order reformulation \eqref{eq::gVMDINAVD}, Theorem~\ref{thm::existence} does not require the Lipschitz continuity of $\nabla^2 f$ and not even that of $\nabla f$ to obtain global existence and uniqueness. In contrast, these smoothness assumptions would have been required if the standard first-order formulation of \eqref{eq::VMDINAVD} in phase-space (position-velocity) was used. In fact, \eqref{eq::gVMDINAVD} allows for a natural extension of \eqref{eq::VMDINAVD} to non-smooth convex functions as we detail in Appendix~\ref{app:nonsmooth}, and which is not the case for \eqref{eq::CN} and \eqref{eq::LM}. This allows defining an inertial Newton-like dynamics in the non-smooth setting which is important for many applications; e.g.\ \cite{castera2021inertial}.
		\end{remark}


	\section{Non-asymptotic Control of \texorpdfstring{\eqref{eq::VMDINAVD}}{VM-DIN-AVD}}\label{sec::Control}
	The purpose of this section is to understand how close $x$ might be to $x_N$ and $x_{LM}$, as a function of $\alpha$ and $\eps$.
	Since $f$ is coercive and strongly convex on bounded sets, it has a unique minimizer $x^\star\in \R^n$. Consequently, any two trajectories that converge to $x^\star$ will eventually be arbitrarily close to each other. Thus, asymptotic results of the form $\Vert x(t)-x_N(t)\Vert \xrightarrow[t\to+\infty]{} 0$ are not precise enough to claim, for example, that $x$ has a ``Newtonian behavior''.
	Instead, we will derive upper bounds on the distance between trajectories that hold \emph{for all time} $t\geq 0$, and which typically depend on $\eps$ and/or $\alpha$. We first present the case where $\alpha$ is small relative to $\eps$ and then generalize.

	\subsection{Comparison with \texorpdfstring{\eqref{eq::CN}}{CN} under Moderate Viscous Dampings}
	When the damping $\alpha$ remains moderate w.r.t.\ the variable mass $\eps$, one expects the solutions of \eqref{eq::VMDINAVD} to be close to that of \eqref{eq::CN}. We make the following assumptions.
	\begin{assumption}\label{ass:largeepsilon}
		Assumption~\ref{ass:general} holds and there exists $c_1,c_2\geq 0$ such that for all $t\geq 0$, $\vert\eps'(t)\vert\leq c_1 \eps(t)$ and $\alpha(t) \leq c_2\eps(t)$.
	\end{assumption}
	The assumption states that $\alpha$ must decrease at least as fast as $\eps$ (up to a constant).\footnote{Assumption~\ref{ass:largeepsilon} can hold only after some $t_0\geq 0$, we take $t_0=0$ for the sake of simplicity.}
	The reason for assuming $\vert\eps'(t)\vert \leq  c_1\eps(t)$ is technical and will appear more clearly in the proofs below. It formally means that $\eps$ can decrease at most exponentially fast.\footnote{This is a consequence of Gronwall's lemma, see e.g., \citep{emmrich1999discrete}.} This is a relatively mild assumption that holds, for example, for any polynomial decay $\eps_0/(t+1)^a$, $a\in\N$.
	We start with the main result of this section.
	\begin{theorem}\label{thm::mainEpsLarge}
		Let $x_N$ be the solution of \eqref{eq::CN}, and let $c_1,c_2\geq 0$. There exist $C_0,C_1,C_2\geq 0$, depending on $c_1$, $c_2$, such that for all $(\eps,\alpha)$ for which Assumption~\ref{ass:largeepsilon} holds with constants $c_1$ and $c_2$, the corresponding solution $x$ of \eqref{eq::VMDINAVD} is such that for all $t\geq 0$,
		\begin{equation}\label{eq::ResLargeEps}
			\norm{x(t)-x_N(t)}\leq C_0 e^{-\frac{t}{\beta}}\eps_0\norm{\dx_0} + C_1\sqrt{\eps(t)} +C_2 \int_{s=0}^{t} e^{\frac{1}{\beta}(s-t)} \sqrt{\eps(s)}\diff s.
		\end{equation}
	\end{theorem}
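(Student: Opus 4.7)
The key structural observation is that \eqref{eq::CN} is, under the change of variable $g_N(t):=\nabla f(x_N(t))$, the autonomous linear ODE $\beta\dot g_N+g_N=0$, so $g_N(t)=e^{-t/\beta}\nabla f(x_0)$. The plan is to obtain a similar scalar ODE for $g(t):=\nabla f(x(t))$ from \eqref{eq::VMDINAVD} (with an inhomogeneity) and compare the two. Multiplying out $\frac{d}{dt}[\beta\nabla f(x(t))]=\beta\nabla^2f(x(t))\dot x(t)$ and using \eqref{eq::VMDINAVD} to substitute, I get
\begin{equation*}
  \beta\dot g(t)+g(t)=-\eps(t)\ddot x(t)-\alpha(t)\dot x(t),\qquad g(0)=\nabla f(x_0)=g_N(0).
\end{equation*}
So the ``gradient error'' $h:=g-g_N$ satisfies $\beta\dot h+h=-\eps\ddot x-\alpha\dot x$ with $h(0)=0$, and the variation of constants formula gives a closed expression for $h(t)$ as an integral involving $\eps\ddot x+\alpha\dot x$ weighted by $e^{(s-t)/\beta}/\beta$.

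The next step is to get rid of the second derivative $\ddot x$, which one does not want to estimate directly, by integration by parts. Writing $\frac{d}{ds}[e^{(s-t)/\beta}\eps(s)]=e^{(s-t)/\beta}[\eps(s)/\beta+\eps'(s)]$ and pairing the boundary term at $s=t$ with the surviving $\alpha\dot x$ integrand, the quantity $\nu(s)=\alpha(s)-\eps'(s)-\eps(s)/\beta$ from \eqref{eq::gVMDINAVD} emerges naturally. I therefore expect to end up with
\begin{equation*}
  g(t)-g_N(t)=-\tfrac{1}{\beta}\eps(t)\dot x(t)+\tfrac{1}{\beta}e^{-t/\beta}\eps_0\dot x_0-\tfrac{1}{\beta}\int_0^t e^{(s-t)/\beta}\nu(s)\dot x(s)\diff s.
\end{equation*}
This identity already has the same structural shape as the three terms in \eqref{eq::ResLargeEps}, up to bounds on $\eps(t)\|\dot x(t)\|$ and $|\nu(s)|\cdot\|\dot x(s)\|$.

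The core estimate is then produced by a standard Lyapunov argument on the energy $E(t)=f(x(t))-f(x^\star)+\tfrac{\eps(t)}{2}\|\dot x(t)\|^2$. Differentiating and substituting $\eps\ddot x$ from \eqref{eq::VMDINAVD} cancels the $\langle\nabla f,\dot x\rangle$ term and leaves $\dot E=\bigl(\eps'/2-\alpha\bigr)\|\dot x\|^2-\beta\langle\nabla^2f(x)\dot x,\dot x\rangle\le 0$ because $\eps'\le 0$, $\alpha\ge 0$ and $f$ is convex. Hence $E(t)\le E(0)$, which yields on one hand the uniform bound $\sqrt{\eps(t)}\|\dot x(t)\|\le\sqrt{2E(0)}$ (so that $\eps(t)\|\dot x(t)\|\le\sqrt{2E(0)}\sqrt{\eps(t)}$, giving the $C_1\sqrt{\eps(t)}$ contribution), and on the other hand that $x(t)$ remains in the sublevel set $\{f\le f(x^\star)+E(0)\}$, which is bounded by coercivity. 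Along \eqref{eq::CN}, $f(x_N(\cdot))$ is non-increasing, so $x_N$ lies in the same sublevel set. On this common bounded set, $f$ is $\mu$-strongly convex for some $\mu>0$, and therefore $\|x(t)-x_N(t)\|\le\mu^{-1}\|g(t)-g_N(t)\|$.

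To conclude, Assumption~\ref{ass:largeepsilon} gives $|\nu(s)|\le(c_1+c_2+1/\beta)\eps(s)$, so that combining the integration-by-parts identity with the energy bound and the strong-convexity-on-a-ball bound yields \eqref{eq::ResLargeEps} with $C_0=1/(\mu\beta)$, $C_1=\sqrt{2E(0)}/(\mu\beta)$, and $C_2=(c_1+c_2+1/\beta)\sqrt{2E(0)}/(\mu\beta)$, each of which depends on $c_1,c_2$ through the fixed data $x_0,\dot x_0,\beta,f$. The delicate point I expect to be the main obstacle is the confinement step: one must argue that \emph{both} trajectories stay in a single bounded sublevel set uniformly in $t$ so that a common strong-convexity modulus $\mu$ can be extracted; this couples the comparison estimate to the a priori Lyapunov bound and is the reason the energy argument has to be carried out before, not after, invoking strong convexity.
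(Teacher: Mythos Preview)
Your proof is correct and follows essentially the same route as the paper's: the paper introduces the auxiliary variable $\omega(t)=\eps(t)\dot x(t)+\beta\nabla f(x(t))$, derives the first-order ODE $\dot\omega+\tfrac{1}{\beta}\omega=(\tfrac{1}{\beta}\eps+\eps'-\alpha)\dot x$ and solves it, which is exactly your variation-of-constants formula for $h=g-g_N$ after your integration by parts (the paper's $\tfrac{1}{\beta}\eps+\eps'-\alpha$ is your $-\nu$). Your Lyapunov function $E$, confinement argument, strong-convexity bound, and resulting constants $C_0,C_1,C_2$ all coincide with the paper's.
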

	This extends a previous result from \cite[Proposition~3.1]{alvarez2002second} which states a similar bound for constant $\eps$, $\alpha\equiv 0$ and $\beta=1$. Theorem~\ref{thm::mainEpsLarge} corresponds to the blue parts in the phase diagram of Figure~\ref{fig::phasediagandquad} (see also Corollary~\ref{cor::simpler} below).
	\begin{remark}
		The strength of the result comes from the fact that $C_0,C_1,C_2$ do not depend on $\eps$ and $\alpha$, and that the result is \emph{non-asymptotic}. This allows in particular for choosing $(\eps,\alpha)$ to control the distance from $x$ to $x_N$, for all time $t\geq 0$.
	\end{remark}
	\begin{remark}
		Under Assumption~\ref{ass:largeepsilon}, the dynamics \eqref{eq::VMDINAVD} is dominated by the variable mass. The damping $\alpha$ does not appear in Theorem~\ref{thm::mainEpsLarge}.
	\end{remark}
	The above theorem and remarks emphasize the ``Newtonian nature'' of \ref{eq::VMDINAVD}. We present two lemmas before proving Theorem~\ref{thm::mainEpsLarge}, and then state a simpler bound than \eqref{eq::ResLargeEps}, see Corollary~\ref{cor::simpler}.
	\begin{lemma}\label{lem::LyapLargeEps}
		Let $(\eps,\alpha)$, and let $x$ be the corresponding solution of \ref{eq::VMDINAVD}. For all $t\geq 0$, define the function,
		$U(t) = \frac{\eps(t)}{2} \norm{\dot{x}(t)}^2 + f(x(t))-f(x^\star).$
		Then $U$ is differentiable and for all $t>0$,
		\begin{equation*}
			\frac{\diff U}{\diff t} (t) = \frac{\eps'(t)}{2} \norm{\dot{x}(t)}^2 - \alpha(t) \norm{\dot{x}(t)}^2 - \beta\langle \Hf(x(t))\dot{x}(t), \dot{x}(t)\rangle\leq 0.
		\end{equation*}
		Therefore, in particular, $U$ is non-increasing.
	\end{lemma}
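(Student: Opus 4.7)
The plan is the standard Lyapunov-energy computation: differentiate $U$ along trajectories of \eqref{eq::VMDINAVD} and use the ODE to eliminate $\ddx$, then invoke the sign assumptions on $\eps'$, $\alpha$, and $\Hf$.

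First I would justify that $U$ is differentiable. Under Assumption~\ref{ass:general} the solution $x$ is $C^2$ (since \eqref{eq::VMDINAVD} is a well-posed second-order ODE by Theorem~\ref{thm::existence} and $\gf$ is $C^1$), $\eps$ is $C^2$, and $f$ is $C^2$, so both terms composing $U$ are differentiable. The chain/product rules then give
\begin{equation*}
    \frac{\diff U}{\diff t}(t) = \frac{\eps'(t)}{2}\norm{\dx(t)}^2 + \eps(t)\langle \ddx(t), \dx(t)\rangle + \langle \gf(x(t)), \dx(t)\rangle.
\end{equation*}

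Next, I would use the dynamics to replace $\eps(t)\ddx(t)$. Rearranging \eqref{eq::VMDINAVD} yields
\begin{equation*}
    \eps(t)\ddx(t) = -\alpha(t)\dx(t) - \beta\Hf(x(t))\dx(t) - \gf(x(t)),
\end{equation*}
and taking the scalar product with $\dx(t)$ gives
\begin{equation*}
    \eps(t)\langle \ddx(t),\dx(t)\rangle = -\alpha(t)\norm{\dx(t)}^2 - \beta\langle \Hf(x(t))\dx(t),\dx(t)\rangle - \langle \gf(x(t)),\dx(t)\rangle.
\end{equation*}
Substituting this into the expression for $\frac{\diff U}{\diff t}$ cancels the $\langle \gf(x(t)),\dx(t)\rangle$ term and produces exactly the claimed identity.

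Finally, for the sign: by Assumption~\ref{ass:general}, $\eps$ is non-increasing, so $\eps'(t)\leq 0$; $\alpha$ is non-negative; and since $f$ is convex and $C^2$, $\Hf(x(t))$ is positive semi-definite, so $\langle \Hf(x(t))\dx(t),\dx(t)\rangle\geq 0$. With $\beta>0$, each of the three terms in the derivative is non-positive, hence $\frac{\diff U}{\diff t}(t)\leq 0$ and $U$ is non-increasing. There is no real obstacle here; the only mild care is in checking differentiability at $t=0$ (one-sided derivative), which follows from the same $C^2$ regularity of the solution on $\R_+$ provided by Theorem~\ref{thm::existence}.
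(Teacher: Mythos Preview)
Your proof is correct and follows essentially the same approach as the paper: differentiate $U$, substitute $\eps(t)\ddx(t)$ from the ODE so that the $\langle\gf(x(t)),\dx(t)\rangle$ terms cancel, and conclude from the signs of $\eps'$, $\alpha$, and the positive semi-definiteness of $\Hf$. If anything, your write-up is slightly more explicit about regularity and about the roles of $\alpha\geq 0$ and $\beta>0$ than the paper's own argument.
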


	\begin{proof}
		Let $t\geq 0$, since $x$ is twice differentiable, $U$ is differentiable and,
		\begin{equation*}
			\frac{\diff U}{\diff t} (t) = \frac{\eps'(t)}{2} \norm{\dot{x}(t)}^2 + \eps(t)\langle\dx(t),\ddx(t)\rangle + \langle \dot{x}(t),\gf(x(t))\rangle.
		\end{equation*}
		We use the fact that $x$ is solution of \eqref{eq::VMDINAVD}, to substitute $\eps(t)\ddx(t)$ by its expression,
		$
		\frac{\diff U}{\diff t} (t) = \frac{\eps'(t)}{2} \norm{\dot{x}(t)}^2 - \alpha(t) \norm{\dot{x}(t)}^2 - \beta\langle \Hf(x(t))\dot{x}(t), \dot{x}(t)\rangle.
		$
		By assumption $\eps$ is non-increasing so for all $t>0$, $\eps'(t)\leq 0$. Furthermore $f$ is convex so $\langle \Hf(x(t))\dot{x}(t), \dot{x}(t)\rangle\geq 0$. Hence $U$ is non-increasing.
	\end{proof}
	We then state the following bound.
	\begin{lemma}\label{lem::boundeps}
		For any $(\eps,\alpha)$, the corresponding solution  $x$ of \eqref{eq::VMDINAVD} is such that for all $t\geq 0$,
		\begin{equation*}
				\eps(t)\norm{\dx(t)}\leq \sqrt{2U(0)}\sqrt{\eps(t)}.
		\end{equation*}
	\end{lemma}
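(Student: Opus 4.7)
The plan is to combine monotonicity of the Lyapunov-like function $U$ from Lemma~\ref{lem::LyapLargeEps} with the non-negativity $f(x(t))-f(x^\star)\geq 0$ (which holds since $x^\star$ is the minimizer of $f$, guaranteed unique by Assumption~\ref{ass:general}).

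First I would invoke Lemma~\ref{lem::LyapLargeEps}: for every admissible pair $(\eps,\alpha)$ and the corresponding solution $x$, the function $U$ is non-increasing on $\R_+$, so $U(t)\leq U(0)$ for all $t\geq 0$. Next, I would drop the non-negative term $f(x(t))-f(x^\star)$ from the definition of $U(t)$, yielding
\begin{equation*}
\tfrac{\eps(t)}{2}\,\|\dx(t)\|^2 \;\leq\; U(t) \;\leq\; U(0).
\end{equation*}
Multiplying both sides by $2\eps(t)$ (which is positive by Assumption~\ref{ass:general}) gives $\eps(t)^2\,\|\dx(t)\|^2\leq 2U(0)\,\eps(t)$, and taking the (positive) square root yields exactly
\begin{equation*}
\eps(t)\,\|\dx(t)\| \;\leq\; \sqrt{2U(0)}\,\sqrt{\eps(t)},
\end{equation*}
as claimed.

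There is essentially no obstacle: everything reduces to the monotonicity supplied by the preceding lemma and the defining inequality for a minimizer. The only things to double-check are that $f(x(t))-f(x^\star)\geq 0$ (immediate from $x^\star=\arg\min f$) and that $\eps(t)>0$ so that division/multiplication and the square root are unambiguous — both are guaranteed by Assumption~\ref{ass:general}. Note also that the bound does not require Assumption~\ref{ass:largeepsilon}; it holds under the general assumptions alone, which is consistent with how the statement is phrased ``for any $(\eps,\alpha)$''.
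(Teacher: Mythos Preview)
Your proof is correct and follows the same approach as the paper: use the monotonicity of $U$ from Lemma~\ref{lem::LyapLargeEps} to get $\tfrac{\eps(t)}{2}\norm{\dx(t)}^2\leq U(0)$, then multiply by $\eps(t)$ and take the square root.
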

	From Lemma~\ref{lem::LyapLargeEps}, $U$ is non-increasing so $\forall t\geq 0$, $U(t)\leq U(0)$. Then in particular
		$
		\frac{\eps(t)}{2} \norm{\dot{x}(t)}^2 \leq U(0)
		$
		and the proof follows by multiplying both sides by $\eps(t)$.

	\begin{proof}[Proof of Theorem~\ref{thm::mainEpsLarge}]
		Let $(\eps,\alpha)$ as defined in Sections~\ref{sec::pbstatement} and~\ref{sec::prelim}, and let $x$ be the corresponding solution of \eqref{eq::VMDINAVD}. Then, according to Lemma~\ref{lem::LyapLargeEps}, for all $t\geq 0$,  $U(t)\leq U(0)$, so  in particular
		$f(x(t)) \leq f(x_0) + \frac{\eps_0}{2}\Vert\dx_0\Vert^2.$
		Denoting $M_0= f(x_0) + \frac{\eps_0}{2}\Vert\dx_0\Vert^2$, the set $\mathsf{K}_0=\left\{y\in\R^n\mid f(y)\leq M_0\right\}$ is bounded, since $f$ is coercive ($\lim_{\norm{y}\to+\infty} f(y)=+\infty$). So for all $t\geq 0$, $x(t)\in\mathsf{K}_0$. Since $M_0$ (and hence $\mathsf{K}_0$) depends only on $\eps_0$, $x_0$ and $\dx_0$, we have proved that for any choice $(\eps,\alpha)$, the corresponding solution $x$ of \eqref{eq::VMDINAVD} is inside $\mathsf{K}_0$ at all times.
		Let $x_N$ be the solution of \eqref{eq::CN}. One can similarly see that for all $t\geq 0$, $f(x_N(t))\leq f(x_N(0))=f(x_0)\leq M_0$. So we also have $x_N(t)\in \mathsf{K}_0$ for all $t\geq 0$.

		Now, fix $c_1,c_2>0$, and let $(\eps,\alpha)$ such that Assumption~\ref{ass:largeepsilon} is satisfied with constants $c_1,c_2$. Let $x$ be the corresponding solution of \eqref{eq::VMDINAVD}.
		Since $f$ is strongly convex on bounded sets, it is strongly convex on $\mathsf{K}_0$. We denote $\mu_{\mathsf{K}_0}>0$ the strong-convexity parameter of $f$ on $\mathsf{K}_0$. Equivalently, we have that $\nabla f$ is strongly monotone on $\mathsf{K}_0$, that is, $\forall y_1,y_2\in\mathsf{K}_0$,
		\begin{equation}\label{eq::gfMono}
			\langle\gf(y_1)-\gf(y_2),y_1-y_2\rangle\geq \mu_{\mathsf{K}_0} \norm{y_1-y_2}^2.
		\end{equation}
		Let $t\geq 0$, since $x(t)\in\mathsf{K}_0$ and $x_N(t)\in \mathsf{K}_0$, by
		combining \eqref{eq::gfMono} with the Cauchy--Schwarz inequality, we deduce that
		\begin{equation}\label{eq::boundwithgrad}
			\norm{x(t)-x_N(t)}\leq \frac 1 {\mu_{\mathsf{K}_0}}\norm{\gf(x(t))-\gf(x_N(t))}.
		\end{equation}
		Therefore, it is sufficient to bound the difference of gradients in order to bound $\norm{x(t)-x_N(t)}$.
		First, remark that \eqref{eq::CN} can be rewritten as follows: $\frac{\diff}{\diff t} \gf(x_N(t)) + \frac 1 \beta\gf(x_N(t)) = 0$. So we can integrate, for all $t\geq0$,
		\begin{equation}\label{eq::gradN}
			\gf(x_N(t)) = e^{-\frac t \beta}\gf(x_0).
		\end{equation}

		We now turn our attention to $\gf(x(t))$, for which we cannot find a closed-form solution in general. We rewrite \eqref{eq::VMDINAVD} in the following equivalent form
		\begin{equation*}
			\frac{\diff}{\diff t}\left[\eps(t)\dx(t)+\beta\gf(x(t))\right] + \frac{1}{\beta}\eps(t)\dx(t)+\gf(x(t)) = \left(\frac{1}{\beta}\eps(t) + \eps'(t)- \alpha(t)\right) \dx(t).
		\end{equation*}
		Introducing the variable $\omega(t)=\eps(t)\dx(t)+\beta\gf(x(t))$, the latter is thus solution to
		\begin{equation*}
			\dot\omega(t)+ \frac 1 \beta \omega(t) = \left(\frac{1}{\beta}\eps(t) + \eps'(t)- \alpha(t)\right) \dx(t),\quad t\geq 0,
		\end{equation*}
		with $\omega(0) = \eps_0\dot{x}_0+\beta\gf(x_0)$. This is a non-homogeneous first-order ODE in $\omega$, whose solution can be expressed using the integrating factor
		\begin{equation*}
			\omega(t)=e^{-\frac t \beta}(\eps_0\dx_0+\beta\gf(x_0))+ e^{-\frac t \beta}\int_{0}^{t} e^{\frac s \beta} \left(\frac 1 \beta \eps(s) + \eps'(s) - \alpha(s)\right)\dx(s)\diff s.
		\end{equation*}
		We thus have the following expression for $\gf(x)$, for all $t\geq 0$,
		\begin{multline}\label{eq::gradDIN}
			\beta\gf(x(t))=\beta e^{-\frac t \beta}\gf(x_0) + e^{-\frac t \beta }\eps_0\dx_0 -\eps(t)\dx(t) \\+ e^{-\frac t \beta}\int_{0}^{t} e^{ \frac s \beta} \left(\frac 1 \beta \eps(s) + \eps'(s) - \alpha(s)\right)\dx(s)\diff s.
		\end{multline}

		We can now use \eqref{eq::gradN} and \eqref{eq::gradDIN} in \eqref{eq::boundwithgrad} to get
		\begin{multline*}
			\norm{x(t)-x_N(t)}\leq
			\\
			\frac 1 {\beta\mu_{\mathsf{K}_0}}\left\Vert e^{-\frac t \beta }\eps_0\dx_0 -\eps(t)\dx(t) + e^{-\frac t \beta}\int_{0}^{t} e^{ \frac s \beta} \left(\frac 1 \beta \eps(s) + \eps'(s) - \alpha(s)\right)\dx(s)\diff s\right\Vert.
		\end{multline*}
		Using the triangle inequality, we obtain,
		\begin{multline}\label{eq::boundInt2}
			\norm{x(t)-x_N(t)}\leq \frac{\eps_0\Vert\dx_0\Vert}{\beta\mu_{\mathsf{K}_0}} e^{-\frac t \beta } +  \frac{\eps(t)\Vert\dx(t)\Vert}{\beta\mu_{\mathsf{K}_0}}
			\\
			+ \frac{1}{\beta\mu_{\mathsf{K}_0}} \int_{0}^{t} e^{ \frac 1 \beta (s-t)} \left\vert \frac 1 \beta \eps(s) + \eps'(s) - \alpha(s)\right\vert \Vert\dx(s)\Vert\diff s.
		\end{multline}
		The first term in \eqref{eq::boundInt2} corresponds to the first one in \eqref{eq::ResLargeEps} with $C_0= 1/(\beta\mu_{\mathsf{K}_0})$. As for the second-one, by direct application of Lemma~\ref{lem::boundeps}, for all $t\geq 0$, ${\eps(t)\Vert\dx(t)\Vert}\leq \sqrt{2U(0)}\sqrt{\eps(t)}$, so we set $C_1 = \sqrt{2U(0)}/(\beta\mu_{\mathsf{K}_0})$.
		Regarding the last term in \eqref{eq::boundInt2}, using Assumption~\ref{ass:largeepsilon} and again Lemma~\ref{lem::boundeps}, it holds that, for all $s\geq 0$,
		$$
		\left\vert \frac 1 \beta \eps(s) + \eps'(s) - \alpha(s)\right\vert \norm{\dx(s)} \leq
		\left(\frac 1 \beta+c_1+c_2\right) \eps(s)\norm{\dx(s)}\leq (\frac 1 \beta+c_1+c_2)\sqrt{2U(0)\eps(s)}.
		$$
		This proves the theorem with $C_2 = \frac{\sqrt{2U(0)}}{\beta\mu_{\mathsf{K}_0}}\left(\frac 1 \beta+c_1+c_2\right)$.
	\end{proof}

	Let us analyze the bound in Theorem~\ref{thm::mainEpsLarge}. The first term in \eqref{eq::ResLargeEps} decays exponentially fast and can even be zero if the initial speed is $\dx_0=0$, the second-one decays like $\sqrt{\eps(t)}$, however, the rate at which the last term decreases is less obvious. The following corollary gives a less-tight but easier-to-understand estimate.
	\begin{corollary}\label{cor::simpler}
		Let the same assumptions and variables as in Theorem~\ref{thm::mainEpsLarge}. If furthermore $c_1<\frac{2}{\beta}$, then there exists $C_3>0$ such that for all $t\geq 0$,
		\begin{equation*}
			\norm{x(t)-x_N(t)}\leq C_0 e^{-\frac{t}{\beta}}\eps_0\norm{\dx_0} + C_3\sqrt{\eps(t)}.
		\end{equation*}
	\end{corollary}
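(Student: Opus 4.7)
The plan is to start directly from the bound \eqref{eq::ResLargeEps} and absorb the integral term into a multiple of $\sqrt{\eps(t)}$. The two exponential terms already match the desired form, so the task reduces to showing that under the extra condition $c_1 < 2/\beta$, one has
\begin{equation*}
\int_{0}^{t} e^{\frac{1}{\beta}(s-t)} \sqrt{\eps(s)}\,\diff s \;\leq\; K\,\sqrt{\eps(t)}
\end{equation*}
for some constant $K$ depending only on $c_1$ and $\beta$.

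The key step is to control $\sqrt{\eps(s)}$ in terms of $\sqrt{\eps(t)}$ for $s \leq t$ by exploiting the quasi-exponential lower bound on $\eps$ coming from Assumption~\ref{ass:largeepsilon}. Since $\eps$ is non-increasing, $\eps' \leq 0$ and $\vert\eps'(s)\vert \leq c_1 \eps(s)$ rewrites as $\tfrac{\diff}{\diff s}\ln\eps(s) \geq -c_1$. Integrating this differential inequality from $s$ to $t$ (where $s\leq t$) yields
\begin{equation*}
\ln\eps(t) - \ln\eps(s) \geq -c_1(t-s), \qquad \text{i.e.,}\qquad \sqrt{\eps(s)} \leq \sqrt{\eps(t)}\, e^{\frac{c_1}{2}(t-s)}.
\end{equation*}
Plugging this into the integral gives
\begin{equation*}
\int_{0}^{t} e^{\frac{1}{\beta}(s-t)} \sqrt{\eps(s)}\,\diff s \;\leq\; \sqrt{\eps(t)} \int_{0}^{t} e^{-\left(\frac{1}{\beta}-\frac{c_1}{2}\right)(t-s)}\,\diff s.
\end{equation*}

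The hypothesis $c_1 < 2/\beta$ is precisely what makes the exponent $\tfrac{1}{\beta}-\tfrac{c_1}{2}$ strictly positive, so the remaining integral is bounded uniformly in $t$ by $\bigl(\tfrac{1}{\beta}-\tfrac{c_1}{2}\bigr)^{-1}$. Combining this bound with the $C_1\sqrt{\eps(t)}$ term from \eqref{eq::ResLargeEps} yields the result with
\begin{equation*}
C_3 \;=\; C_1 + \frac{C_2}{\tfrac{1}{\beta}-\tfrac{c_1}{2}}.
\end{equation*}
The only potential obstacle is verifying that the condition $c_1 < 2/\beta$ is indeed what makes the integral bounded—this is what dictates the sharpness of the assumption, but no further subtlety is needed beyond the Gronwall-type bound on $\eps$ and direct integration.
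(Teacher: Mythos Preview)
Your argument is correct, but it proceeds differently from the paper. The paper sets $J(t)=\int_0^t e^{s/\beta}\sqrt{\eps(s)}\,\diff s$, integrates by parts, and uses $-\eps'/\eps\le c_1$ to obtain the self-referential inequality
\[
J(t)\;\le\;\beta e^{t/\beta}\sqrt{\eps(t)}+\tfrac{c_1\beta}{2}J(t),
\]
which it then solves for $J(t)$; the condition $c_1<2/\beta$ is what makes the coefficient $1-c_1\beta/2$ positive. You instead first turn $|\eps'|\le c_1\eps$ into the pointwise Gronwall bound $\sqrt{\eps(s)}\le\sqrt{\eps(t)}\,e^{c_1(t-s)/2}$ and then integrate directly, the condition $c_1<2/\beta$ now appearing as positivity of the exponent $\tfrac1\beta-\tfrac{c_1}{2}$. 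Both routes produce the same constant $C_3=C_1+\dfrac{2\beta}{2-c_1\beta}\,C_2$; your approach is a bit more elementary (no integration by parts, no bootstrap), while the paper's bootstrap stays entirely at the level of the integral without ever writing the pointwise comparison of $\eps(s)$ and $\eps(t)$.
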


	\begin{proof}[Proof of Corollary~\ref{cor::simpler}]
		For all $t\geq 0$, define $J(t) = \int_{0}^{t} e^{\frac{s}{\beta}} \sqrt{\eps(s)} \diff s$. An integration by parts yields
		\begin{multline}\label{eq::boundJ}
			J(t)  = \left[\beta e^{\frac{s}{\beta}}\sqrt{\eps(s)}\right]_{s=0}^t - \int_{s=0}^t \beta e^{\frac{s}{\beta}} \frac{\eps'(s)}{2\sqrt{\eps(s)}}\diff s
			\\= \beta e^{\frac{t}{\beta}}\sqrt{\eps(t)} - \beta \eps_0 + \int_{s=0}^t \beta e^{\frac{s}{\beta}} \frac{-\eps'(s)}{2\eps(s)}\sqrt{\eps(s)}\diff s.
		\end{multline}
		By assumption, $0\leq c_1<\frac{2}{\beta}$ such that for all $s>0$, $\vert\eps'(s)\vert \leq c_1\eps(s)$, which in our setting is equivalent to $\frac{-\eps'(s)}{\eps(s)}\leq c_1$.
		So we deduce from \eqref{eq::boundJ} that
		\begin{equation*}
			J(t)  \leq  \beta e^{\frac{t}{\beta}}\sqrt{\eps(t)} + c_1\frac{\beta}{2}\int_{s=0}^t e^{\frac{s}{\beta}} \sqrt{\eps(s)}\diff s = \beta e^{\frac{t}{\beta}}\sqrt{\eps(t)} + c_1\frac{\beta}{2}J(t).
		\end{equation*}
		So, $\left(1-c_1\frac \beta 2\right)J(t)  \leq   \beta e^t\sqrt{\eps(t)}$. By assumption $1-c_1\frac \beta 2>0$, therefore, $J(t)\leq 	\frac{2}{2-c_1\beta} e^{\frac t \beta}\sqrt{\eps(t)}$.
		We use this in \eqref{eq::ResLargeEps} and set $C_3 = C_1 + C_2\frac{2}{2-c_1\beta}$ to get the result.
	\end{proof}

	\begin{remark}\label{rem::extensions}
			Observe that local strong convexity is only required to get \eqref{eq::gfMono} and \eqref{eq::boundwithgrad}. In fact, local strong convexity can be greatly weakened and our claims above can be generalized to a large sub-class of strictly convex functions as we explain in Appendix~\ref{app::strictconvex}. We did not directly consider the strictly convex case to emphasize the main ideas and ease the reading.
			Following Remark~\ref{rem::nonsmooth}, it seems also that a non-smooth extension is possible using regularization techniques. Unlike the strictly convex setting, the non-smooth one would however require further investigations since the vanilla Newton method is not applicable to non-smooth functions.
	\end{remark}

	So far our results only cover the case where $\alpha$ is ``not too large'' w.r.t.\ $\eps$, and do not study \eqref{eq::LM}.
	We now state a more general result that covers these cases.

	\subsection{Generalization to Sub-exponentially Decaying Viscous Dampings}
	This time we do not assume a link between $\eps$ and $\alpha$ but only sub-exponential decays.
	\begin{assumption}\label{ass:Onlysubexp}
		Assumption~\ref{ass:general} holds and there exists $c_1,c_2\geq 0$ such that for all $t\geq 0$, $\vert\eps'(t)\vert\leq c_1 \eps(t)$ and $\vert\alpha'(t)\vert \leq c_2\alpha(t)$.
	\end{assumption}
	We are now in position to state the main result of this section.
	\begin{theorem}\label{thm::mainGenResult}
		Let $x_N$ and $\xlm$ be the solutions of \eqref{eq::CN} and \eqref{eq::LM} respectively, and let $c_1,c_2\geq 0$.
		There exist constants $C,\tilde{C}\geq 0$, depending on $c_1$, $c_2$, $\eps_0$, $\alpha_0$ and the initial conditions, such that for all $\eps$ and $\alpha$ for which Assumption~\ref{ass:Onlysubexp} holds with $c_1$, $c_2$, the corresponding solution $x$ of \eqref{eq::VMDINAVD} is such that for all $t\geq 0$,
		\begin{equation}\label{eq::BoundXN}
			\norm{x(t)-x_N(t)}\leq C\left[ e^{-\frac{t}{\beta}} + \sqrt{\eps(t)} +
			\alpha(t)
			+ \int_{s=0}^{t} e^{\frac{1}{\beta}(s-t)} (\sqrt{\eps(s)}+\alpha(s))\diff s\right],
		\end{equation}
		and,
		\begin{equation}\label{eq::BoundXLM}
			\norm{x(t)-\xlm(t)}\leq \tilde{C}\left[ e^{-\frac{t}{\beta}} + \sqrt{\eps(t)} +
			\alpha(t)
			+ \int_{s=0}^{t} e^{\frac{1}{\beta}(s-t)} (\sqrt{\eps(s)}+\alpha(s))\diff s\right].
		\end{equation}
	\end{theorem}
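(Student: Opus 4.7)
The plan is to adapt the proof of Theorem~\ref{thm::mainEpsLarge}. The new difficulty is that under Assumption~\ref{ass:Onlysubexp} the damping $\alpha$ is no longer controlled by $\eps$, so the term $\alpha(s)\dx(s)$ appearing in the integral representation of $\gf(x(t))$ can no longer be absorbed by Lemma~\ref{lem::boundeps}. My fix is a single integration by parts that replaces $\dx$ by the bounded quantity $x$. As a preliminary step I would show that $x$, $x_N$ and $\xlm$ all remain in the common sublevel set $\mathsf{K}_0=\{y\in\R^n:f(y)\leq M_0\}$ with $M_0 = f(x_0) + \frac{\eps_0}{2}\norm{\dx_0}^2$. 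For $x$ and $x_N$ this is already in the proof of Theorem~\ref{thm::mainEpsLarge}. For $\xlm$, the relation $\dxlm = -(\alpha(t) I + \beta\Hf(\xlm))^{-1}\gf(\xlm)$ coming from \eqref{eq::LM} gives $\tfrac{\diff}{\diff t}f(\xlm) = -\langle \gf(\xlm),(\alpha I + \beta\Hf)^{-1}\gf(\xlm)\rangle \leq 0$, hence $f(\xlm(t))\leq f(x_0) \leq M_0$. Writing $\mu>0$ for the strong convexity modulus of $f$ on $\mathsf{K}_0$ and $R = \sup_{y\in\mathsf{K}_0}\norm{y}$, strong monotonicity of $\gf$ on $\mathsf{K}_0$ reduces both statements to estimates of the corresponding gradient differences.

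\textbf{Bound \eqref{eq::BoundXN}.} Rerunning the integrating-factor computation of Theorem~\ref{thm::mainEpsLarge} gives
\begin{equation*}
\beta(\gf(x(t)) - \gf(x_N(t))) = e^{-\frac{t}{\beta}}\eps_0\dx_0 - \eps(t)\dx(t) + e^{-\frac{t}{\beta}}\int_0^t e^{\frac{s}{\beta}}\left(\tfrac{\eps(s)}{\beta} + \eps'(s) - \alpha(s)\right)\dx(s)\,\diff s.
\end{equation*}
I split the integrand. The $(\eps/\beta + \eps')\dx$ piece is handled exactly as in Theorem~\ref{thm::mainEpsLarge}, using $|\eps'|\leq c_1\eps$ and Lemma~\ref{lem::boundeps} to produce an $O(\sqrt{\eps(s)})$ contribution. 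The key new step is to integrate by parts on the $\alpha\dx$ piece, with $u = e^{s/\beta}\alpha(s)$ and $\diff v = \dx(s)\,\diff s$:
\begin{equation*}
\int_0^t e^{\frac{s}{\beta}}\alpha(s)\dx(s)\,\diff s = e^{\frac{t}{\beta}}\alpha(t)x(t) - \alpha_0 x_0 - \int_0^t e^{\frac{s}{\beta}}\left(\tfrac{\alpha(s)}{\beta} + \alpha'(s)\right)x(s)\,\diff s.
\end{equation*}
This trades the unbounded factor $\norm{\dx}$ (which may grow as $\eps\to 0$) for the bounded factor $\norm{x}\leq R$, at the price of a new $\alpha'$ term, which Assumption~\ref{ass:Onlysubexp} controls via $|\alpha/\beta+\alpha'|\leq(1/\beta + c_2)\alpha$. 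Collecting all terms produces pointwise contributions of orders $e^{-t/\beta}$, $\sqrt{\eps(t)}$ and $\alpha(t)$, together with an integral of order $\int_0^t e^{(s-t)/\beta}(\sqrt{\eps(s)} + \alpha(s))\diff s$; dividing by $\beta\mu$ gives \eqref{eq::BoundXN}.

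\textbf{Bound \eqref{eq::BoundXLM}.} I would proceed by the triangle inequality $\norm{x - \xlm}\leq\norm{x - x_N} + \norm{x_N - \xlm}$. For the second summand, rewrite \eqref{eq::LM} as $\beta\tfrac{\diff}{\diff t}\gf(\xlm) + \gf(\xlm) = -\alpha\dxlm$ and integrate with factor $e^{s/\beta}$ to obtain $\gf(x_N(t)) - \gf(\xlm(t)) = \tfrac{1}{\beta}e^{-\frac{t}{\beta}}\int_0^t e^{\frac{s}{\beta}}\alpha(s)\dxlm(s)\,\diff s$. Applying the same integration-by-parts trick to this integral---this time using $\xlm(s)\in\mathsf{K}_0$---yields a bound of the form $C'[e^{-t/\beta} + \alpha(t) + \int_0^t e^{(s-t)/\beta}\alpha(s)\,\diff s]$, which combined with \eqref{eq::BoundXN} produces \eqref{eq::BoundXLM}. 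The critical ingredient throughout is the integration by parts: without it, $\alpha(s)\norm{\dx(s)}$ could only be bounded by the useless quantity $\alpha(s)/\sqrt{\eps(s)}$ via Lemma~\ref{lem::boundeps}, and the condition $|\alpha'|\leq c_2\alpha$ in Assumption~\ref{ass:Onlysubexp} is exactly what is needed to re-absorb the $\alpha'$ term it creates back into a multiple of $\alpha$.
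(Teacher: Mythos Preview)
Your proposal is correct and follows essentially the same approach as the paper: the paper's key step is exactly your integration by parts on $\int_0^t e^{s/\beta}\alpha(s)\dx(s)\diff s$ (the paper phrases it via the identity $e^{s/\beta}\dx(s)=\tfrac{\diff}{\diff s}(e^{s/\beta}x(s))-\tfrac{1}{\beta}e^{s/\beta}x(s)$, but this is the same computation), trading $\dx$ for the bounded $x$ and absorbing $\alpha'$ via Assumption~\ref{ass:Onlysubexp}. The only cosmetic difference is that for \eqref{eq::BoundXLM} the paper compares $\gf(x)$ and $\gf(\xlm)$ directly (using the same integrating-factor formula $\gf(\xlm(t))=e^{-t/\beta}\gf(x_0)-\tfrac{1}{\beta}e^{-t/\beta}\int_0^t e^{s/\beta}\alpha(s)\dxlm(s)\diff s$ that you wrote), whereas you triangulate through $x_N$; both routes arrive at the same bound.
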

	The proof is omitted but key elements are presented in Appendix~\ref{supp::GenThm}. Although it follows a similar reasoning as that of Theorem~\ref{thm::mainEpsLarge}, more involved estimates are needed.

	Let us comment on these results. The bound \eqref{eq::BoundXN} generalizes Theorem~\ref{thm::mainEpsLarge}, although the constant involved will, in general, be larger than those in \eqref{eq::ResLargeEps} (see the proof of Theorem~\ref{thm::mainGenResult} in Appendix~\ref{supp::GenThm}).
	Theorem~\ref{thm::mainGenResult} allows for far more flexibility in the choice of $\eps$ and $\alpha$ in order to control $x$ and make it possibly close to $x_N$.
	The bound in \eqref{eq::BoundXLM} is the same as that in  \eqref{eq::BoundXN} (up to a constant), but this time w.r.t.\ $\xlm$, thus connecting \eqref{eq::VMDINAVD} to \eqref{eq::LM}. We make the following two important remarks. First \eqref{eq::BoundXLM} involves  $\alpha$, suggesting that making $x$ close to $\xlm$  requires not only $\eps$ but also $\alpha$ to vanish asymptotically.
	Additionally, Theorem~\ref{thm::mainGenResult} does not state to which of $x_{N}$ and $\xlm$ the solution of \eqref{eq::VMDINAVD} is the closest.
	It remains an open question to know whether one can make \eqref{eq::BoundXLM} independent of $\alpha$, and to state to which trajectory $x$ is the closest. Yet, the numerical experiments in Section~\ref{sec::exp} suggest that neither are possible. Indeed, we observe that for some functions $f$, $x$ is \emph{sometimes} closer to $x_{N}$ than to $\xlm$, even when $\eps(t)\leq \alpha(t)$.

	Nevertheless, Theorem~\ref{thm::mainGenResult} answers the question asked in the introduction: yes, \eqref{eq::VMDINAVD} is really of second-order nature since it can be brought close to the second-order dynamics \eqref{eq::CN} and \eqref{eq::LM}. Doing so, it benefits from the good properties of these methods, such as the robustness to bad conditioning, as previously illustrated on the right of Figure~\ref{fig::phasediagandquad}.
	This concludes the analysis from a control perspective. We will now derive an approximation of the solution $x$ in order to study the impact that $\eps$ and $\alpha$ have on the speed of convergence of $x$ to $x^\star$ compared to the speeds of convergence of $x_N$ and $\xlm$.


	\section{Approximate Solutions and Asymptotic Analysis on Quadratics}\label{sec::Quads}
	In addition to Assumption~\ref{ass:general}, we consider the case where $f$ is a strongly convex quadratic function in order to study the asymptotic behavior of \eqref{eq::VMDINAVD} w.r.t.\ \eqref{eq::CN} and \eqref{eq::LM}. Quadratic functions are the prototypical example of strongly convex functions. In particular, any strongly convex function can be locally approximated by a quadratic one around its minimizer, making the latter a good model for understanding the local behavior of dynamics.
	In this section, $f$ is quadratic: $f(y)=\frac{1}{2}\Vert Ay-b\Vert_2^2$ for all $y\in \R^n$, where $A\in\R^{n\times n}$ is symmetric positive definite and $b\in\R^n$. Without loss of generality, we take $b=0$, so that the unique minimum is $x^\star=0$.
	\subsection{Setting: the Special Case of Quadratic Functions}
	Quadratic functions are particularly interesting in our setting since DIN-like ODEs take a simpler form (as observed in \cite{attouch2020first,shi2021understanding}). Indeed, $\forall y\in\R^n$, $\gf(y) = A^TAy$ and $\Hf(y)=A^T A$. Since $\Hf(y)$ is independent of $y$ we can rewrite \eqref{eq::VMDINAVD} in an eigenspace\footnote{This can be generalized to the case where $A^TA$ is only semi-definite by considering orthogonal projections on an eigenspace spanned by the positive eigenvalues of $A^TA$.} of  $A^TA$. That is, we can study the ODE coordinate-wise by looking at one-dimensional problems of the form
	\begin{equation}\label{eq::QuadeDINAVD}\tag{Q1-VM-DIN-AVD}
		\eps(t)\ddx(t)+(\alpha(t)+\beta\lambda)\dx(t)+\lambda x(t)=0, \quad t\geq 0.
	\end{equation}
	Here (and throughout what follows) $\lambda>0$ denotes any eigenvalue of $A^TA$ and $x\colon\R_+\to\R$ now denotes the corresponding  coordinate (function) of the solution of \eqref{eq::VMDINAVD} in an eigenspace of $A^TA$. The dynamics \eqref{eq::QuadeDINAVD} is a \emph{linear} second-order ODE in $x$ with non-constant coefficients. Similarly, \eqref{eq::LM} can be rewritten coordinate-wise as
	\begin{equation}\label{eq::quadLM}\tag{Q1-LM}
		(\alpha(t)+\beta\lambda)\dx_{LM}(t)+\lambda x_{LM}(t)=0, \quad t\geq 0,
	\end{equation}
	where $\xlm\colon\R_+\to\R$, and \eqref{eq::CN} becomes
	\begin{equation}\label{eq::quadCN}\tag{Q1-CN}
		\beta\dx_N(t)+ x_N(t)=0, \quad t\geq 0,
	\end{equation}
	where again, $x_N\colon\R_+\to\R$ is one-dimensional.
	Observe in particular that \eqref{eq::CN} and \eqref{eq::LM} are now first-order \emph{linear} ODEs, whose solutions have closed forms: $\forall t\geq 0$,
	\begin{equation}\label{eq::closeformLMN}
		x_N(t) = x_0e^{-\frac{t}{\beta}}
		\quad \text{and}\quad
		x_{LM}(t) = x_0\exp\left(-\int_0^t \frac{\lambda}{\alpha(s)+\beta\lambda}\diff s \right).
	\end{equation}
	Since the minimizer is $x^\star=0$, we see that $x_N$ converges exponentially fast to $x^\star$, with a rate independent of $\lambda$  while the rate of $\xlm$ depends on $\lambda$ and how fast $\alpha$ vanishes.

	Unfortunately, except for some special choices of $\eps$ and $\alpha$ (see \cite{attouch2020first}), one cannot solve the second-order linear ODE \eqref{eq::QuadeDINAVD} in closed form in general.
	Additionally, it is hopeless to circumvent the difficulty by finding a closed form for $\gf(x)$, accordingly to what we did in Section~\ref{sec::Control}, since here $\gf(x)=\lambda x$.
	In order to study the speed of convergence of $x$ despite not having access to a closed form, we will approximate it with a controlled error, via a method that we now present.

	\subsection{The Liouville--Green Method}
	In what follows, we rely on the Liouville--Green method~\citep{Liouville1836,green1838}, a technique for obtaining \emph{non-asymptotic} approximations to solutions of linear second-order ODEs with non-constant coefficients. First, we give the intuition behind the method, following the presentation of \citep{olver1997asymptotics}.
	Consider the differential equation
	\begin{equation}\label{eq::GenericODE}
		\ddot{z}(t)-r(t)z(t)=0,\quad t\geq 0,
	\end{equation}
	where $r$ is real-valued, positive, and twice continuously differentiable. Any linear second-order ODE can be reformulated in the form \eqref{eq::GenericODE}, see Lemma~\ref{lem::canonForm} below. Since for all $t\geq 0$, $r(t)\neq 0$, we can use the changes of variables $\tau = \int_0^t \sqrt{r(s)}\diff s$ and $w=r^{1/4}z$ and show that $w$ is solution to
	\begin{equation}\label{eq::GenericODE2}
		\ddot{w}(\tau)-(1+\psi(\tau))w(\tau)=0,\quad t\geq 0,
	\end{equation}
	where\footnote{We express $\psi(\tau)$ via $t$ using the one-to-one correspondence between $\tau$ and $t$ to ease readability.} $\psi(\tau)=\frac{4r(t)r''(t)-5r'(t)^2}{16r(t)^3}$. The LG method consists in neglecting the term $\psi(\tau)$ in \eqref{eq::GenericODE2}, which simply yields two approximate solutions $\hat{w}_1(\tau) = e^{\tau}$ and $\hat w_2(\tau)=e^{-\tau}$. Expressing this in terms of $z$ and $t$, we obtain
	\begin{equation}\label{eq::LG}
		\hat z_1(t) = r(t)^{-1/4}\exp\left(\int_0^t \sqrt{r(s)}\diff s\right) \quad \text{and}\quad \hat z_2(t) = r(t)^{-1/4}\exp\left(\int_0^t -\sqrt{r(s)}\diff s\right).
	\end{equation}
	Those are the LG approximations of the solutions of \eqref{eq::GenericODE}. They are formally valid on any $[0,T]$, $T> 0$ when $\psi$ is ``not too large'' and if $\sqrt{r}$ is integrable on $[0,T]$.

	\begin{remark}
		There exists other (but less intuitive) ways to derive the LG approximations which allow for generalization to higher-order linear ODEs \citep[Chapter~10]{bender1999asymptotic}.
	\end{remark}
	The advantage of this approach is the possibility to estimate the error made using \eqref{eq::LG} w.r.t.\ the true solutions of \eqref{eq::GenericODE}. This is expressed in the following theorem which gathers results from \cite{blumenthal1899uber,olver1961error,taylor1982improved}.
	\begin{theorem}[\cite{olver1997asymptotics}]\label{thm::olver}
		Let $r\colon \R_{+}\to \R$ be a real, positive, twice continuously differentiable function, and define $\varphi(t) = \frac{4r(t)r''(t)-5r'(t)^2}{16r(t)^{5/2}}$ for all $t\geq 0$. Then for any $T>0$, the differential equation,
		\begin{equation}\label{eq::GenericODEthm}
			\ddot{z}(t)-r(t)z(t)=0,\quad t\in [0,T],
		\end{equation}
		has two real and twice continuously differentiable solutions defined $\forall t\in[0,T]$ by,
		\begin{equation*}
			z_1(t) = \frac{1+\delta_1(t)}{r(t)^{1/4}}\exp\left(\int_0^t \sqrt{r(s)}\diff s\right) \ \ \text{and} \ \,
			z_2(t) = \frac{1+\delta_2(t)}{r(t)^{1/4}}\exp\left(-\int_0^t \sqrt{r(s)}\diff s\right),
		\end{equation*}
		where $\displaystyle\vert\delta_1(t)\vert\leq \exp\left(\frac{1}{2} \int_0^t \vert \varphi(s)\vert\diff s\right)-1$ and $\displaystyle\vert\delta_2(t)\vert\leq \exp\left(-\frac{1}{2} \int_t^{T} \vert \varphi(s)\vert\diff s\right)-1$.
		If in addition $\displaystyle\int_0^{+\infty} \vert\varphi(s)\vert\diff s <+\infty$, then the results above also hold for $T=+\infty$.
	\end{theorem}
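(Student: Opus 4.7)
My plan is to follow the classical Liouville--Olver route: apply the Liouville transformation to reduce \eqref{eq::GenericODEthm} to a small perturbation of $\ddot w - w = 0$, then recast each candidate solution as a Volterra integral equation whose Picard iterates yield explicit exponential bounds on the remainder. The first step is to introduce $\tau(t) = \int_0^t \sqrt{r(s)}\,\diff s$ and $w(\tau) = r(t)^{1/4}z(t)$. Since $r>0$ and $r\in C^2$, the map $t\mapsto\tau$ is a $C^2$-diffeomorphism from $[0,T]$ onto $[0,T_\infty]$ with $T_\infty = \int_0^T \sqrt{r(s)}\,\diff s$. A direct computation of $\ddot z$ under this change of variables shows that $z$ solves \eqref{eq::GenericODEthm} if and only if $w$ solves $\ddot w(\tau) - (1+\psi(\tau)) w(\tau) = 0$, with $\psi(\tau) = (4rr'' - 5(r')^2)/(16 r^3)$. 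The identity $|\psi(\tau)|\,\diff\tau = |\varphi(t)|\,\diff t$ then lets me translate any bound in $\tau$ back into the stated bound in $t$.

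Next, I would seek the decaying solution in the form $w_2(\tau) = e^{-\tau}(1 + h_2(\tau))$. Substituting into the perturbed equation yields the first-order relation $(e^{-2\tau} h_2')' = e^{-2\tau}\psi(\tau)(1 + h_2(\tau))$. Imposing the endpoint conditions $h_2(T_\infty) = 0$ and $e^{-2\tau}h_2'(\tau)\to 0$ at $\tau = T_\infty$ and integrating twice (exchanging orders by Fubini) recasts the problem as the Volterra integral equation
\begin{equation*}
h_2(\tau) \;=\; \int_\tau^{T_\infty} \tfrac{1}{2}\bigl(1-e^{-2(u-\tau)}\bigr)\,\psi(u)\,(1+h_2(u))\,\diff u,
\end{equation*}
whose kernel is bounded by $1/2$ on $\{u\ge \tau\}$. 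A symmetric construction with the growing mode $w_1 = e^{\tau}(1+h_1)$ yields an integral equation with the same kernel but over $[0,\tau]$, driven by the initial data at $\tau = 0$.

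I would then solve this integral equation by Picard iteration from $h_2^{(0)}\equiv 0$. Setting $P(\tau) = \tfrac{1}{2}\int_\tau^{T_\infty}|\psi(u)|\,\diff u$, a short induction exploiting $P'(u) = -\tfrac{1}{2}|\psi(u)|$ telescopes to $|h_2^{(n+1)}(\tau) - h_2^{(n)}(\tau)| \le P(\tau)^{n+1}/(n+1)!$, so the Picard series converges to a solution with $|h_2(\tau)| \le e^{P(\tau)} - 1$. Reverting via $z_i(t) = r(t)^{-1/4} w_i(\tau(t))$ and rewriting $P(\tau) = \tfrac{1}{2}\int_t^T |\varphi(s)|\,\diff s$ (together with the analogous identity for $h_1$) delivers the claimed bounds on $\delta_1$ and $\delta_2$. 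The extension to $T = +\infty$ is automatic as soon as $\int_0^{+\infty}|\varphi|<+\infty$, which is precisely what keeps $P$ finite and the Picard series uniformly convergent on $[0,+\infty)$. The main obstacle I expect is performing the Liouville transformation cleanly enough to land on the exact form of $\psi$: computing $\ddot z$ in terms of $w'$, $w''$ and two derivatives of $r$ is the most error-prone step and determines everything else, with a secondary subtlety being the justification of the decay of $h_2$ at $T_\infty$ in the unbounded case, which forces one to construct $h_2$ first as a fixed point of the integral equation and then verify a posteriori that the resulting $w_2$ solves the ODE with the prescribed asymptotic form.
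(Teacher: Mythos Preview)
The paper does not prove this theorem at all: it is quoted as a known result, attributed to \cite{blumenthal1899uber,olver1961error,taylor1982improved} and stated in the form given by \cite{olver1997asymptotics}. The paper only sketches the \emph{intuition} behind the Liouville--Green approximation in the paragraph preceding the statement (the change of variables $\tau=\int_0^t\sqrt{r}$, $w=r^{1/4}z$, and the appearance of $\psi$), without supplying any error analysis. Your proposal is precisely the classical Olver proof---Liouville transformation followed by a Volterra integral equation and Picard iteration with the kernel bound $\tfrac12|1-e^{-2(u-\tau)}|\le\tfrac12$---and is correct in outline; in particular your identity $|\psi(\tau)|\,\diff\tau=|\varphi(t)|\,\diff t$ is exactly what converts the $\tau$-bounds into the $t$-bounds of the statement. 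So there is nothing to compare: you are reconstructing the cited proof, not an alternative to one the paper gives.

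One minor observation: your bound $|h_2(\tau)|\le e^{P(\tau)}-1$ with $P(\tau)=\tfrac12\int_t^T|\varphi|$ is the correct Olver estimate; the paper's displayed bound for $\delta_2$ carries a spurious minus sign in the exponent (which would make the right-hand side negative), so do not try to reproduce that exact inequality.
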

	\begin{remark}\label{rem::olver} We make the following remarks regarding the above result.\\
		--\ Note that $z_1$ and $z_2$ in Theorem~\ref{thm::olver} are \emph{exact} solutions to \eqref{eq::GenericODEthm}. The LG approximations $\hat z_1$ and $\hat z_2$ are obtained by neglecting the unknown functions $\delta_1$ and $\delta_2$ in $z_1$ and $z_2$. The theorem gives a \emph{non-asymptotic} bound for the errors $\vert z_1(t)-\hat{z}_1(t)\vert$ and $\vert z_2(t)-\hat{z}_2(t)\vert$, $t\geq 0$.
		\\
		--\ Since we assumed $r$ to be twice continuously differentiable and positive, $\varphi$ is continuous, so it is integrable except maybe for $t\to+\infty$.
		\\
		--\ For the sake of simplicity, the formulation of Theorem~\ref{thm::olver} slightly differs from that in \cite{olver1997asymptotics}, the original formulation can be recovered by a change of variable.
	\end{remark}

	\subsection{Liouville--Green Approximation of \texorpdfstring{\eqref{eq::VMDINAVD}}{VM-DIN-AVD}}
	We now proceed to make use of the LG method for approximating the solutions of \eqref{eq::QuadeDINAVD}. The reader only interested in the result can jump directly to the  Section~\ref{sec::mainasymptoticres}. We first make the following assumption.
	\begin{assumption}\label{ass:smooth}
		The functions $\alpha$ and $\eps$ are three times continuously differentiable, and $\eps_0$ is such that $\forall t\geq 0$, $\eps_0<\frac{(\beta\lambda)^2}{2\vert\alpha'(t)\vert+4\lambda}$.
	\end{assumption}
	\begin{remark}
		The condition on $\eps_0$ in Assumption~\ref{ass:smooth} is only technical, so that $r$ defined below is positive.
		It can be easily satisfied since $\vert\alpha'(t)\vert$ is uniformly bounded. Indeed,  $\alpha$ is non-increasing and non-negative (see Section~\ref{sec::pbstatement}), from which one can deduce that $\int_{0}^{+\infty} \vert\alpha'(s)\vert\diff s\leq \alpha_0$.
	\end{remark}
	We now rewrite \eqref{eq::QuadeDINAVD} in the form \eqref{eq::GenericODEthm}.
	\begin{lemma}\label{lem::canonForm}
		Suppose that Assumption~\ref{ass:smooth} holds, and let $x$ be the solution of \ref{eq::QuadeDINAVD}. For all $t\geq 0$, define
		\begin{equation}\label{eq::pr}
			p(t) = \frac{\alpha(t)+\beta\lambda}{\eps(t)} \quad \text{and}  \quad r(t) = \frac{p(t)^2}{4} + \frac{p'(t)}{2} - \frac{\lambda}{\eps(t)}.
		\end{equation}
		Then, $p$ and $r$ are twice continuously differentiable, $r$ is positive and the function $y$ defined for all $t\geq 0$ by $y(t) = x(t)\exp\left(\int_0^t \frac{p(s)}{2}\diff s\right)$ is a solution to
		\begin{equation}\label{eq::ODErewritten}
			\ddot y(t) - r(t)y(t) = 0,\quad t\geq 0,
		\end{equation}
		with initial condition $(y(0),\dot{y}(0) )= (x_0,\dx_0 + \frac{p(0)}{2}x_0)$.
	\end{lemma}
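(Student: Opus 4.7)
The plan is to apply the classical Liouville change of variable that removes the first-derivative term from a second-order linear ODE. Since $\eps(t)>0$ by Assumption~\ref{ass:general}, dividing \eqref{eq::QuadeDINAVD} by $\eps(t)$ yields the equivalent normalized form
\begin{equation*}
	\ddx(t) + p(t)\dx(t) + \frac{\lambda}{\eps(t)}\,x(t) = 0,
\end{equation*}
with $p$ as in \eqref{eq::pr}. Setting $\Phi(t) = \exp\!\bigl(\int_0^t \tfrac{p(s)}{2}\diff s\bigr)$ and $y(t) = x(t)\Phi(t)$ is the standard substitution for killing the $\dot{x}$-term, and I expect it to deliver \eqref{eq::ODErewritten} directly.

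Concretely, I would differentiate twice using the product rule to get $\dot y = (\dx + \tfrac{p}{2}x)\Phi$ and $\ddot y = (\ddx + p\dx + \tfrac{p'}{2}x + \tfrac{p^2}{4}x)\Phi$. Substituting $\ddx = -p\dx - \tfrac{\lambda}{\eps}x$ from the normalized equation makes the $\dx$-terms cancel, leaving $\ddot y = \bigl(\tfrac{p^2}{4} + \tfrac{p'}{2} - \tfrac{\lambda}{\eps}\bigr)\,y = r(t)y(t)$, which is precisely \eqref{eq::ODErewritten}. The initial conditions are immediate: $\Phi(0)=1$ gives $y(0)=x_0$ and $\dot y(0) = \dx_0 + \tfrac{p(0)}{2}x_0$.

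It then remains to check the claimed regularity and the positivity of $r$. Under Assumption~\ref{ass:smooth}, $\alpha,\eps\in C^3$ and $\eps>0$, so $p=(\alpha+\beta\lambda)/\eps\in C^3$, hence $r = p^2/4 + p'/2 - \lambda/\eps$ is twice continuously differentiable, as required. The only genuinely non-trivial step is the positivity of $r$. I would clear denominators:
\begin{equation*}
	4\eps(t)^2\, r(t) = (\alpha(t)+\beta\lambda)^2 + 2\eps(t)\alpha'(t) - 2(\alpha(t)+\beta\lambda)\eps'(t) - 4\lambda\eps(t).
\end{equation*}
Since $\eps'\leq 0$ (Assumption~\ref{ass:general}), the term $-2(\alpha+\beta\lambda)\eps'$ is non-negative; since $\alpha\geq 0$, one has $(\alpha+\beta\lambda)^2\geq(\beta\lambda)^2$; and since $\eps$ is non-increasing, $\eps(t)\leq\eps_0$. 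Combining these three observations yields the lower bound $4\eps(t)^2 r(t)\geq (\beta\lambda)^2 - 2\eps_0|\alpha'(t)| - 4\lambda\eps_0$, and the upper bound $\eps_0 < (\beta\lambda)^2/(2|\alpha'(t)|+4\lambda)$ imposed in Assumption~\ref{ass:smooth} is precisely what makes this quantity strictly positive for every $t\geq 0$. This is the step I expect to be the main technical hurdle, as it is the sole place where the quantitative condition on $\eps_0$ is used and where the signs of $\eps'$ and $\alpha'$ must be combined carefully.
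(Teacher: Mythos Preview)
Your proposal is correct and follows essentially the same approach as the paper: the paper also computes $\dot y$ and $\ddot y$ directly, substitutes $\ddx = -p\dx - \tfrac{\lambda}{\eps}x$, and obtains $\ddot y = r y$; for the positivity of $r$ it drops the non-negative term $-(\alpha+\beta\lambda)\eps'/\eps^2$, uses $(\alpha+\beta\lambda)^2\geq(\beta\lambda)^2$, and reduces to the inequality $\eps_0 < (\beta\lambda)^2/(2|\alpha'(t)|+4\lambda)$ exactly as you do. The only cosmetic difference is that you clear denominators to work with $4\eps(t)^2 r(t)$, whereas the paper argues with the fractions directly; your explicit mention of the $C^2$ regularity of $r$ from $\alpha,\eps\in C^3$ is a small point the paper leaves implicit.
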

	\begin{proof}
		We first check that for all $t\geq 0$, $r(t)$ is positive. Let $t>0$,
		\begin{equation}\label{eq::rpositive}
			r(t)> 0 \iff \frac{(\alpha(t)+\beta\lambda)^2}{4\eps(t)^2} + \frac{\alpha'(t)}{2\eps(t)} -  \frac{(\alpha(t)+\beta\lambda)\eps'(t)}{\eps(t)^2} - \frac{\lambda}{\eps(t)} >0.
		\end{equation}
		Since $\eps'(t)\leq 0$ and $\alpha'(t)\leq 0$, one can check that a sufficient condition for \eqref{eq::rpositive} to hold is,
		\begin{equation*}
			r(t)> 0 \impliedby \frac{(\alpha(t)+\beta\lambda)^2}{4} > \left(\frac{\vert\alpha'(t)\vert}{2}+\lambda\right)\eps(t) \impliedby \frac{(\beta\lambda)^2}{2\vert\alpha'(t)\vert+4\lambda}>\eps_0.
		\end{equation*}
		So under Assumption~\ref{ass:smooth}, for all $t\geq 0$, $r(t)>0$. We then check that $y$ is indeed solution to \eqref{eq::ODErewritten}. Let $t>0$,
		\begin{equation*}
			\dot{y}(t) = \frac{p(t)}{2}x(t)\exp\left(\int_0^t \frac{p(s)}{2}\diff s\right) + \dx(t)\exp\left(\int_0^t \frac{p(s)}{2}\diff s\right),\quad \text{and,}
		\end{equation*}
		\begin{equation*}
			\ddot{y}(t) = \exp\left(\int_0^t \frac{p(s)}{2}\diff s\right)\left[\left(\frac{p(t)^2}{4}+\frac{p'(t)}{2}\right)x(t) + p(t)\dx(t) + \ddx(t)\right].
		\end{equation*}
		Since $x$ solves \eqref{eq::QuadeDINAVD}, it holds that $\ddx(t) = -p(t)\dx(t) - \frac{\lambda}{\eps(t)}x(t)$, so,
		\begin{multline*}
			\ddot{y}(t) = \exp\left(\int_0^t \frac{p(s)}{2}\diff s\right)\left(\frac{p(t)^2}{4}+\frac{p'(t)}{2}- \frac{\lambda}{\eps(t)}\right)x(t) \\= \left(\frac{p(t)^2}{4}+\frac{p'(t)}{2}- \frac{\lambda}{\eps(t)}\right)y(t)
			= r(t)y(t).
		\end{multline*}
	\end{proof}
	Lemma~\ref{lem::canonForm} gives a reformulation of \eqref{eq::QuadeDINAVD} suited to apply Theorem~\ref{thm::olver}.
	To use the theorem for all $t\geq 0$, we need to ensure that $\varphi(t)=\frac{4r(t)r''(t)-5r'(t)^2}{16r(t)^{5/2}}$ is integrable. To this aim we make the following assumption.
	\begin{assumption}\label{ass:subexp}
		The functions $\eps$ and $\alpha$ have first, second and third-order derivatives that are integrable on $[0,+\infty[$. In addition, $\lim\limits_{t\to\infty}{\eps(t)}=0$ and $\eps'(t)^2/\eps(t)$ is integrable on $[0,+\infty[$.
	\end{assumption}
	\begin{remark}
		Assumption~\ref{ass:subexp} holds for most decays used in practice, with in particular any polynomial decay of the form $\frac{\eps_0}{(t+1)^a}$ and $\frac{\alpha_0}{(t+1)^b}$, $a\in\N\setminus \{0\}$ and $b\in\N$. Note that $\eps$ and $\alpha$ need not be integrable and $\alpha$ can even be constant.
	\end{remark}
	The next lemma states the integrability of $\varphi$ on $[0,+\infty[$.
	\begin{lemma}\label{lem::phiintegrable}
		Under Assumption~\ref{ass:smooth} and~\ref{ass:subexp}, $\int_0^{+\infty} \vert \varphi(s)\vert\diff s<+\infty$.
	\end{lemma}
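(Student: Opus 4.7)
The plan is to dominate $|\varphi(t)|$ pointwise by an integrable function. Since $\varphi = (4rr'' - 5(r')^2)/(16 r^{5/2})$ and every singularity of the integrand stems from $\eps(t)\to 0$, the approach is first to obtain a quantitative lower bound on $r$ large enough to absorb the singularities of $r'$ and $r''$, and then to expand these derivatives term by term, matching each resulting contribution with an integrability hypothesis from Assumption~\ref{ass:subexp}.

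\textbf{Step 1: Lower bound on $r$.} Revisiting the proof of Lemma~\ref{lem::canonForm}, the positivity of $r$ is obtained via
\[
r(t) \geq \frac{(\beta\lambda)^2}{4\eps(t)^2} - \frac{\lambda+|\alpha'(t)|/2}{\eps(t)},
\]
the discarded term $-(\alpha+\beta\lambda)\eps'/\eps^2$ being non-negative since $\eps'\leq 0$. The condition of Assumption~\ref{ass:smooth}, combined with $\eps(t)\leq\eps_0$, makes the subtracted term strictly smaller than the leading one; a quantitative version of this inequality yields $r(t)\geq c_r/\eps(t)^2$ for some $c_r>0$ and all $t\geq 0$. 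Consequently $r(t)^{5/2}\geq c_r^{5/2}/\eps(t)^5$.

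\textbf{Step 2: Expansion and integrable bounds.} A direct computation gives $r' = pp'/2 + p''/2 + \lambda\eps'/\eps^2$, and differentiating once more expresses $r''$ as a finite sum of rational monomials in $\alpha,\alpha',\alpha'',\alpha''',\eps,\eps',\eps'',\eps'''$ with powers of $\eps$ in the denominator. The most singular contributions come from repeated differentiation of $p^2/4$: $r'$ contains a leading term of order $\eps'/\eps^3$, while $r''$ contains leading terms of orders $(\eps')^2/\eps^4$, $\eps''/\eps^3$, and $\eps'''/\eps^2$. Dividing by $r^{5/2}\geq c_r^{5/2}/\eps^5$, the dominant contributions to $|\varphi|$ reduce, up to constants, to
\[
\frac{(\eps')^2}{\eps}, \quad \eps'', \quad \text{and} \quad \eps\,\eps''',
\]
each of which is integrable on $\R_+$ by Assumption~\ref{ass:subexp} (using boundedness of $\eps$ for the last one). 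All remaining terms are products of bounded functions---$\eps,\eps',\alpha,\alpha'$ are bounded since their own derivatives are integrable and $\eps,\alpha$ are non-increasing and non-negative---with integrable ones, hence integrable. Summing all contributions yields $|\varphi|\leq g$ with $g\in L^1(\R_+)$, proving the claim.

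\textbf{Main obstacle.} The principal difficulty is the algebraic bookkeeping: the explicit expressions for $r'$ and $r''$ contain many cross-terms mixing derivatives of $\eps$ and $\alpha$, and for each one must check that the compensating factor $\eps^5$ suffices to produce an integrable quantity. The unavoidable appearance of $(\eps')^2/\eps$ is precisely the reason Assumption~\ref{ass:subexp} explicitly includes its integrability as a separate hypothesis, beyond the integrability of the individual derivatives of $\eps$ and $\alpha$.
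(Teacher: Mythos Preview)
Your proposal is correct and follows essentially the same route as the paper: both reduce the problem to showing that $\eps^3|r''|$ and $\eps^5(r')^2$ are integrable by exploiting $r\gtrsim 1/\eps^2$, then expand term by term and match each contribution to the hypotheses of Assumption~\ref{ass:subexp}, with the $(\eps')^2/\eps$ hypothesis emerging as the critical one. The only cosmetic difference is that you extract a uniform lower bound $r(t)\geq c_r/\eps(t)^2$ valid for all $t\geq 0$, whereas the paper uses the asymptotic equivalence $r\sim(\alpha+\beta\lambda)^2/(4\eps^2)$ together with continuity of $\varphi$ on compacts; both lead to the same bookkeeping.
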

	The proof of this lemma involves long computations and is thus postponed to Appendix~\ref{app::integrability}.
	We can now use Theorem~\ref{thm::olver} to obtain an exact form for the solution of \eqref{eq::QuadeDINAVD} based on the LG approximations.
	\begin{theorem}\label{thm::MainResLG}
		Suppose that Assumptions~\ref{ass:smooth} and~\ref{ass:subexp} hold. There exists $A,B\in\R$ such that $x(0)=x_0$, $\dx(0)=\dx_0$ and for all $t\geq 0$, the solution of \eqref{eq::QuadeDINAVD} is
		\begin{align}\label{eq::formofx}
			\begin{split}
				x(t) =&
				A\frac{1+\delta_1(t)}{r(t)^{1/4}}\frac{\sqrt{\alpha(t)+\beta\lambda}}{\sqrt{\eps(t)}}
				\exp\left(\int_0^t -\frac{\lambda}{\alpha(s)+\beta\lambda} - \frac{\lambda^2\eps(s)}{(\alpha(s)+\beta\lambda)^{3}}+o(\eps(s))\diff s\right)
				\\
				+B&\frac{1+\delta_2(t)}{r(t)^{1/4}}
				\frac{\sqrt{\eps(t)}}{\sqrt{\alpha(t)+\beta\lambda}}
				\\
				&\exp\left(\int_0^t -\frac{\alpha(s)+\beta\lambda}{\eps(s)}+\frac{\lambda}{\alpha(s)+\beta\lambda}+\frac{\lambda^2\eps(s)}{(\alpha(s)+\beta\lambda)^{3}}+o(\eps(s))\diff s\right),
			\end{split}
		\end{align}
		where for all  $t\geq 0$,
		\begin{equation}\label{eq::deltabounds}
			\vert\delta_1(t)\vert \leq \exp\left(\frac{1}{2} \int_0^t \vert \varphi(s)\vert\diff s\right)-1
			\quad \text{and}\quad
			\vert\delta_2(t)\vert \leq \exp\left(-\frac{1}{2} \int_t^{+\infty} \vert \varphi(s)\vert\diff s\right)-1.
		\end{equation}
	\end{theorem}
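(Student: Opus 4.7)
The plan is to combine the reduction furnished by Lemma~\ref{lem::canonForm} with the Liouville--Green setup of Theorem~\ref{thm::olver}, and then to expand $\sqrt{r(t)}$ asymptotically in $\eps$ to recover the explicit form \eqref{eq::formofx}.

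First I would apply Lemma~\ref{lem::canonForm} to transform \eqref{eq::QuadeDINAVD} into $\ddot{y} - r(t) y = 0$ via $y(t) = x(t)\exp\bigl(\int_0^t p(s)/2\,ds\bigr)$, so that $x(t) = y(t)\exp\bigl(-\int_0^t p(s)/2\,ds\bigr)$. Since Lemma~\ref{lem::phiintegrable} guarantees $\int_0^{+\infty}|\varphi(s)|\,ds < +\infty$, Theorem~\ref{thm::olver} applies on $[0,+\infty[$ and yields two linearly independent solutions $y_1,y_2$ of the stated form, together with the bounds in \eqref{eq::deltabounds}. The general solution of the linear second-order ODE is then $y = A y_1 + B y_2$, with $A,B\in\R$ uniquely determined by the initial conditions $y(0)=x_0$ and $\dot{y}(0) = \dx_0 + (p(0)/2)x_0$ obtained in Lemma~\ref{lem::canonForm}.

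Second, the core computation is the expansion of $\sqrt{r(t)}$. Writing $r = (p^2/4)(1+u)$ with $u = \frac{2\eps^2 p'}{(\alpha+\beta\lambda)^2} - \frac{4\lambda\eps}{(\alpha+\beta\lambda)^2}$ and noting that $u = O(\eps)$ under Assumption~\ref{ass:subexp} (since $\eps^2 p' = \eps\alpha' - (\alpha+\beta\lambda)\eps'$), the Taylor expansion $\sqrt{1+u} = 1 + u/2 - u^2/8 + O(u^3)$ gives, after elementary simplification using $p = (\alpha+\beta\lambda)/\eps$,
\begin{equation*}
\sqrt{r(t)} \;=\; \frac{p(t)}{2} \;+\; \frac{p'(t)}{2 p(t)} \;-\; \frac{\lambda}{\alpha(t)+\beta\lambda} \;-\; \frac{\lambda^2\eps(t)}{(\alpha(t)+\beta\lambda)^3} \;+\; o(\eps(t)),
\end{equation*}
where one checks by direct computation that $pu/4$ simplifies \emph{exactly} to $p'/(2p) - \lambda/(\alpha+\beta\lambda)$, and the leading part of $-pu^2/16$ is $-\lambda^2\eps/(\alpha+\beta\lambda)^3$ (all further contributions being $O(\eps^2)$ and therefore $o(\eps)$).

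Third, I would integrate this expansion from $0$ to $t$. The primitive of $p'/(2p)$ is $\tfrac{1}{2}\log p$, producing the prefactor $\sqrt{p(t)} = \sqrt{(\alpha(t)+\beta\lambda)/\eps(t)}$ (up to the multiplicative constant $1/\sqrt{p(0)}$ that gets absorbed into $A$). Multiplying $y_1$ by $\exp(-\int_0^t p/2)$ cancels the $\int p/2$ contribution in the exponent and yields the first summand of \eqref{eq::formofx}; multiplying $y_2$ by the same factor doubles the $-\int p/2$ contribution into $-\int_0^t (\alpha(s)+\beta\lambda)/\eps(s)\,ds$, while the opposite sign of $\int p'/(2p)$ produces the reciprocal prefactor $\sqrt{\eps(t)/(\alpha(t)+\beta\lambda)}$, giving the second summand. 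The main obstacle will be controlling the $\sqrt{r}$ expansion rigorously: one must confirm that all omitted contributions (involving $\alpha', \alpha'', \eps', \eps''$ and higher powers of $\eps$) are indeed $o(\eps(t))$ pointwise, and that Assumptions~\ref{ass:smooth} and~\ref{ass:subexp} are strong enough that the integral of this remainder does not spoil the exponential. Once the expansion is secured, the rest is bookkeeping of constants and assembling the two solutions into $x = (Ay_1 + By_2)\exp(-\int_0^t p/2)$.
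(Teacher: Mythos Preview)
Your proposal is correct and follows essentially the same route as the paper's proof: apply Lemma~\ref{lem::canonForm}, invoke Lemma~\ref{lem::phiintegrable} to use Theorem~\ref{thm::olver} on $[0,+\infty[$, then Taylor-expand $\sqrt{r}=\tfrac{p}{2}\sqrt{1+u}$ and integrate, with the $p'/(2p)$ term producing the $\sqrt{(\alpha+\beta\lambda)/\eps}$ prefactor. The only cosmetic difference is that the paper splits $p'/(2p)$ into $\tfrac{\alpha'}{2(\alpha+\beta\lambda)}-\tfrac{\eps'}{2\eps}$ before integrating and isolates the identification of the $-\lambda^2\eps/(\alpha+\beta\lambda)^3$ term as a separate lemma (Lemma~\ref{lem::remainderterm}), whereas you integrate $p'/(2p)$ directly and absorb that check into the main argument.
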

	Thanks to the bounds \eqref{eq::deltabounds}, we now have an approximation of $x$.  We will use it in particular to compare $x$ asymptotically to the solutions of \eqref{eq::quadLM} and \eqref{eq::quadCN}.
	Before this, we prove Theorem~\ref{thm::MainResLG}.

	\begin{proof}[Proof of Theorem~\ref{thm::MainResLG}]
		Let $x$ be the solution of \eqref{eq::QuadeDINAVD} define $p,r$ as in \eqref{eq::pr}. Let us also define $y(t)\stackrel{\textrm{def}}{=}x(t)\exp\left(\int_0^t \frac{p(s)}{2}\diff s\right)$. According to Lemma~\ref{lem::canonForm}, $r$ is positive and $y$ is solution to \eqref{eq::ODErewritten}. Then, from Lemma~\ref{lem::phiintegrable}, $\int_t^{T} \vert \varphi(s)\vert\diff s<+\infty$, so we can apply Theorem~\ref{thm::olver} to $y$ on $[0,+\infty[$. Therefore, there exists $A,B\in\R$, such that $\forall t\geq 0$,
		$$
		y(t)= A\frac{1+\delta_1(t)}{r(t)^{1/4}}\exp\left(\int_0^t \sqrt{r(s)}\diff s\right) + B\frac{1+\delta_2(t)}{r(t)^{1/4}}\exp\left(\int_0^t -\sqrt{r(s)}\diff s\right),
		$$
		where $A$ and $B$ are determined by the initial conditions, and $\delta_1$, $\delta_2$ are such that \eqref{eq::deltabounds} holds.
		Going back to $x(t) = y(t)\exp\left(\int_0^t -\frac{p(s)}{2}\diff s\right)$, we obtain 	that for all $t\geq 0$,
		\begin{multline}\label{eq::formofx0}
			x(t) = A\frac{1+\delta_1(t)}{r(t)^{1/4}}\exp\left(\int_0^t -\frac{p(s)}{2} +\sqrt{r(s)}\diff s\right) \\ + B\frac{1+\delta_2(t)}{r(t)^{1/4}}\exp\left(\int_0^t -\frac{p(s)}{2} -\sqrt{r(s)}\diff s\right).
		\end{multline}
		It now remains to expand the terms in the two exponentials in \eqref{eq::formofx0} in order to obtain \eqref{eq::formofx}. To this aim, we approximate $\sqrt{r(s)}$, let $s\geq 0$,
		\begin{align}\label{eq::sqrtr}
			\begin{split}
				\sqrt{r(s)}&=\frac{p(s)}{2}\sqrt{1+ \frac{2p'(s)}{p(s)^2} - \frac{4\lambda}{\eps(s)p(s)^2}}
				\\&= \frac{p(s)}{2}\left(1 + \frac{p'(s)}{p(s)^2} - \frac{2\lambda}{\eps(s)p(s)^2} - \frac{1}{8}\left(\frac{2p'(s)}{p(s)^2} - \frac{4\lambda}{\eps(s)p(s)^2}\right)^2 + o(\eps(s)^2) \right)
				\\ & = \frac{p(s)}{2} +\frac{ p'(s)}{2p(s)} - \frac{\lambda}{\eps(s)p(s)} - \frac{1}{16}\left(\frac{2p'(s)}{p(s)^{3/2}} - \frac{4\lambda}{\eps(s)p(s)^{3/2}}\right)^2 + o(\eps(s))
				\\  = \frac{p(s)}{2} &+ \frac{p'(s)\eps(s)}{2(\alpha(s)+\beta\lambda)} - \frac{\lambda}{\alpha(s)+\beta\lambda} - \frac{1}{16}\left(\frac{2p'(s)}{p(s)^{3/2}} - \frac{4\lambda\sqrt{\eps(s)}}{(\alpha(s)+\beta\lambda)^{3/2}}\right)^2 + o(\eps(s))
				\\  = \frac{p(s)}{2} &+ \frac{\alpha'(s)/2-\lambda}{\alpha(s)+\beta\lambda} -\frac{\eps'(s)}{2\eps(s)}  - \frac{1}{16}\left(\frac{2p'(s)}{p(s)^{3/2}} - \frac{4\lambda\sqrt{\eps(s)}}{(\alpha(s)+\beta\lambda)^{3/2}}\right)^2 + o(\eps(s))
			\end{split}
		\end{align}
		To ease the readability, we denote $h(t)=\left(\frac{2p'(s)}{p(s)^{3/2}} - \frac{4\lambda\sqrt{\eps(s)}}{(\alpha(s)+\beta\lambda)^{3/2}}\right)^2$. Focusing on the first exponential term in \eqref{eq::formofx0}, we deduce from \eqref{eq::sqrtr} that for all $t\geq 0$,
		\begin{align*}
			\begin{split}
				&\exp\left(\int_0^t -\frac{p(s)}{2} +\sqrt{r(s)}\diff s\right)
				\\&= \exp\left(\int_0^t \frac{\alpha'(s)/2}{\alpha(s)+\beta\lambda} -\frac{\eps'(s)}{2\eps(s)} -\frac{\lambda}{\alpha(s)+\beta\lambda} - \frac{1}{16}h(t) + o(\eps(s))\diff s\right)
				\\&=  \frac{\sqrt{\alpha(t)+\beta\lambda})}{\sqrt{\alpha_0+\beta\lambda}} \frac{\sqrt{\eps_0}}{\sqrt{\eps(t)}} \exp\left(
				\int_0^t \frac{-\lambda}{\alpha(s)+\beta\lambda} -
				\frac{1}{16}h(t) + o(\eps(s))\diff s\right)
				\\&=  \frac{\sqrt{\alpha(t)+\beta\lambda})}{\sqrt{\alpha_0+\beta\lambda}} \frac{\sqrt{\eps_0}}{\sqrt{\eps(t)}} \exp\left(
				\int_0^t \frac{-\lambda}{\alpha(s)+\beta\lambda} -
				\frac{\lambda^2\eps(s)}{(\alpha(s)+\beta\lambda)^{3}} + o(\eps(s))\diff s\right),
			\end{split}
		\end{align*}
		where the last line relies on further computations postponed to Lemma~\ref{lem::remainderterm} in Appendix~\ref{app::integrability}. Performing the exact same type of computations on $\exp\left(\int_0^t -\frac{p(s)}{2} -\sqrt{r(s)}\diff s\right)$, and up to redefining $A$ and $B$ so as to encompass all the constants, we obtain \eqref{eq::formofx} and the result is proved.
	\end{proof}

	\subsection{Comparison of \texorpdfstring{$x$}{x} with \texorpdfstring{$x_{LM}$}{xLM} and \texorpdfstring{$x_N$}{xN}}\label{sec::mainasymptoticres}

	We now have an expression for $x$ which is almost explicit: we do not know $\delta_1$ and $\delta_2$ in closed form, but they are uniformly bounded (by Lemma~\ref{lem::phiintegrable}). We will now  compare the asymptotic behavior of \eqref{eq::formofx} with those of the solutions of \eqref{eq::quadLM} and \eqref{eq::quadCN} that we denoted $x_{LM}$ and $x_N$ respectively.
	Our main result of Section~\ref{sec::Quads} is the following, where $\sim_{+\infty}$ denotes the asymptotic equivalence\footnote{Two real-valued functions $g_1$ and $g_2$ are asymptotically equivalent in $+\infty$ if and only if $\lim_{t\to\infty}\frac{g_1(t)}{g_2(t)}=1$.} between two functions as $t\to\infty$.
	\begin{theorem}\label{thm::MainResAsymptotic}
		Let $x$ be the solution of \eqref{eq::QuadeDINAVD}, given in \eqref{eq::formofx}, and $x_{LM}$ and $x_N$ whose closed forms are stated in \eqref{eq::closeformLMN}. Under Assumptions~\ref{ass:smooth} and~\ref{ass:subexp}, there exists $C>0$ such that the following asymptotic equivalences hold:
		\begin{align}\label{eq::equivx}
			\begin{split}
				x(t) &\sim_{+\infty}
				x_{LM}(t)C\exp\left(\int_0^t - \frac{\lambda^2\eps(s)}{(\alpha(s)+\beta\lambda)^{3}}+o(\eps(s))\diff s\right),\quad \text{and}
				\\x(t)&\sim_{+\infty}
				x_{N}(t)C\exp\left(\int_0^t \frac{\alpha(s)}{\beta(\alpha(s)+\beta\lambda)}- \frac{\lambda^2\eps(s)}{(\alpha(s)+\beta\lambda)^{3}}+o(\eps(s))\diff s\right).
			\end{split}
		\end{align}
		As a consequence, the convergence of $x$ to $x^\star$ is:
		\begin{enumerate}[label=(\roman*)]
			\item \emph{Faster} than that of $x_{LM}$
			if $\eps$ is non-integrable and as fast otherwise.
			\item Slower than that of $x_{N}$ if $\alpha$ is non-integrable and as fast if $\alpha$ is integrable, in the case where $\forall t\geq 0$, $\alpha(t)>\eps(t)$.
			\item \emph{Faster} than that of $x_{N}$ if $\eps$ is non-integrable and as fast if $\eps$ is integrable, in the case where $\forall t\geq 0$, $\alpha(t)< \eps(t)$.
		\end{enumerate}
	\end{theorem}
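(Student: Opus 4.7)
The plan is to extract the leading asymptotics of $x(t)$ from the representation \eqref{eq::formofx} of Theorem~\ref{thm::MainResLG}, then reduce each of (i)--(iii) to an integrability dichotomy for the resulting exponent. My first step is to argue that the second (``B'') term in \eqref{eq::formofx} is negligible compared to the first: its exponent contains the dominant contribution $-\int_0^t (\alpha(s)+\beta\lambda)/\eps(s)\,\diff s$, which diverges to $-\infty$ at rate $\gtrsim 1/\eps(t)$ since $\alpha+\beta\lambda$ is bounded above and below by positive constants, whereas the A-term exponent is only $O(\int_0^t \lambda/(\alpha(s)+\beta\lambda)\,\diff s)=O(t)$. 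The discrepancy in the rational prefactors, of order $\eps(t)/(\alpha(t)+\beta\lambda)$ versus $\sqrt{(\alpha(t)+\beta\lambda)/\eps(t)}$, is then overwhelmed.

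Next I simplify the A-term prefactor. From \eqref{eq::pr}, as $\eps(t)\to 0$ (Assumption~\ref{ass:subexp}), $r(t)=(\alpha(t)+\beta\lambda)^2/(4\eps(t)^2)\cdot(1+o(1))$, so $r(t)^{1/4}\sim\sqrt{\alpha(t)+\beta\lambda}/\sqrt{2\eps(t)}$, hence $\sqrt{\alpha(t)+\beta\lambda}/(r(t)^{1/4}\sqrt{\eps(t)})\to\sqrt{2}$. Assuming that $\delta_1(t)$ has a finite limit $\delta_1^\infty$ as $t\to\infty$ (see below), the full A-term prefactor converges to a positive constant $C$, giving $x(t)\sim C\exp\!\bigl(\int_0^t -\lambda/(\alpha(s)+\beta\lambda)-\lambda^2\eps(s)/(\alpha(s)+\beta\lambda)^{3}+o(\eps(s))\,\diff s\bigr)$. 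The first equivalence in \eqref{eq::equivx} follows by dividing by the closed form of $x_{LM}$ in \eqref{eq::closeformLMN}; the second follows by further multiplying by $x_{LM}(t)/x_N(t)=\exp\!\bigl(\int_0^t \alpha(s)/(\beta(\alpha(s)+\beta\lambda))\,\diff s\bigr)$, an algebraic identity from $1/\beta-\lambda/(\alpha+\beta\lambda)=\alpha/(\beta(\alpha+\beta\lambda))$.

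For consequences (i)--(iii), I reduce each to a sign-and-integrability analysis of the exponent. In (i), the integrand of the $x/x_{LM}$ exponent, $-\lambda^2\eps(s)/(\alpha(s)+\beta\lambda)^3+o(\eps(s))$, is pinched between constant multiples of $-\eps(s)$ (since $\alpha+\beta\lambda$ is uniformly bounded above and below); hence the integral converges iff $\eps$ is integrable, yielding a positive finite limit, and diverges to $-\infty$ otherwise, making the ratio vanish. In (ii)--(iii), the $x/x_N$ integrand $\alpha(s)/(\beta(\alpha(s)+\beta\lambda))-\lambda^2\eps(s)/(\alpha(s)+\beta\lambda)^3+o(\eps(s))$ is bounded by constant multiples of $\alpha$ and $\eps$ in its two contributions. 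Under $\alpha(t)>\eps(t)$ the positive $\alpha$-term dominates, so integrability of $\alpha$ governs the conclusion (same rate vs.\ slower); under $\alpha(t)<\eps(t)$ the negative $\eps$-term dominates, yielding the reversed conclusion (same rate vs.\ faster).

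The main obstacle is establishing the existence of $\delta_1^\infty=\lim_{t\to\infty}\delta_1(t)$, which Theorem~\ref{thm::olver} does not directly assert (it only bounds $|\delta_1|$). This is handled by recalling that in Olver's construction, $\delta_1$ satisfies a Volterra-type integral equation whose kernel is controlled by $|\varphi|$; since $\int_0^{+\infty}|\varphi(s)|\,\diff s<+\infty$ by Lemma~\ref{lem::phiintegrable}, the associated Picard iterates converge absolutely and uniformly, so $\delta_1(t)$ has a finite limit at infinity. A secondary technicality is ensuring that the $o(\eps(s))$ remainders inside the exponents are uniform in $t$ and do not corrupt the integrability dichotomies above; this is settled by revisiting the Taylor expansion of $\sqrt{r(s)}$ in the proof of Theorem~\ref{thm::MainResLG} and verifying that the discarded terms are bounded by a multiple of $\eps(s)^{2}$, hence integrable under Assumption~\ref{ass:subexp}.
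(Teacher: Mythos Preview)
Your proposal is correct and follows essentially the same route as the paper: identify the $B$-term as negligible via the dominant $-\int_0^t(\alpha+\beta\lambda)/\eps$ in its exponent, reduce the $A$-term prefactor to a constant using $r(t)^{1/4}\sim\sqrt{\alpha(t)+\beta\lambda}/\sqrt{2\eps(t)}$, read off the $x_{LM}$ equivalence directly and obtain the $x_N$ one via the algebraic identity $1/\beta-\lambda/(\alpha+\beta\lambda)=\alpha/(\beta(\alpha+\beta\lambda))$, then settle (i)--(iii) by sign-and-integrability of the exponent. Your explicit treatment of $\lim_{t\to\infty}\delta_1(t)$ via Olver's Volterra construction is in fact more careful than the paper, which simply asserts that the $A$-prefactor is asymptotically equivalent to a constant from the uniform boundedness of $\delta_1$; your remark on the $o(\eps)$ remainder is sound, though note that for (i)--(iii) mere $o(\eps)$ (not $O(\eps^2)$) already suffices, since it cannot alter the divergence/convergence of the leading $\eps$-integral.
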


	While the results of Section~\ref{sec::Control} were related to the closeness of \eqref{eq::VMDINAVD} w.r.t.\ \eqref{eq::CN} and \eqref{eq::LM} from a control perspective, Theorem~\ref{thm::MainResAsymptotic} provides a different type of insight. First, the results are asymptotic, so they only allow controlling \eqref{eq::VMDINAVD} for large $t$. They provide however a clear understanding of the nature of the solutions of \eqref{eq::VMDINAVD} and their convergence. The conclusions (summarized in Table~\ref{tabl::speedsummary}) are in accordance with what we would expect: when the viscous damping is larger than the variable mass,
	\eqref{eq::VMDINAVD} behaves more like the Levenberg--Marquardt method than the Newton one, but it actually becomes an accelerated Levenberg--Marquardt dynamics when $\eps$ is non-integrable but vanishing. However, when the variable mass $\eps$ is larger than $\alpha$, the dynamics is closer to the one of the Newton method, and can actually be an accelerated Newton dynamics, again for non-integrable $\eps$. This is analogous to the necessary condition that $\alpha$ must be non-integrable in order to accelerate first-order methods in convex optimization (see \cite{attouch2017asymptotic}). We conclude this section by proving Theorem~\ref{thm::MainResAsymptotic}.

	\begin{proof}[Proof of Theorem~\ref{thm::MainResAsymptotic}]
		Thanks to Assumptions~\ref{ass:smooth} and~\ref{ass:subexp}, Theorem~\ref{thm::MainResLG} tells us that $x$ has the form \eqref{eq::formofx}. We now analyze the two terms in \eqref{eq::formofx}.

		First, we know from Theorem~\ref{thm::MainResLG} that $\delta_1(0)=0$ and $\lim_{t\to+\infty}\delta_2(t)=0$. In addition, by Lemma~\ref{lem::phiintegrable}, $\delta_1$ and $\delta_2$ are uniformly bounded by some positive constant.
		Then $r(t)^{-1/4}$ decays asymptotically like $\sqrt{\eps(t)}$ and $\alpha$ is bounded. So
		$A\frac{1+\delta_1(t)}{r(t)^{1/4}}\frac{\sqrt{\alpha(t)+\beta\lambda}}{\sqrt{\eps(t)}}$ is asymptotically equivalent to some constant $c_1\in\R$ as $t\to +\infty$. Similarly, the factor $B\frac{1+\delta_2(t)}{r(t)^{1/4}}\frac{\sqrt{\eps(t)}}{\sqrt{\alpha(t)+\beta\lambda}}$ is equivalent to $c_2\eps(t)$, with $c_2\in\R$.

		We now analyze the ``exponential factors'' in \eqref{eq::formofx}.
		On the one hand, $\frac{\lambda}{\alpha(s)+\beta\lambda}+\frac{\lambda^2\eps(s)}{(\alpha(s)+\beta\lambda)^{3}}+o(\eps(s))$ converges to $\frac{1}{\beta}$ as $s\to \infty$, while $\frac{\alpha(s)+\beta\lambda}{\eps(s)}$ diverges to $+\infty$. Therefore, we deduce that,
		\begin{multline*}
			\exp\left(\int_0^t -\frac{\alpha(s)+\beta\lambda}{\eps(s)}+\frac{\lambda}{\alpha(s)+\beta\lambda}+\frac{\lambda^2\eps(s)}{(\alpha(s)+\beta\lambda)^{3}}+o(\eps(s))\diff s\right)
			\\= o\left(\exp\left(\int_0^t -\frac{\lambda}{\alpha(s)+\beta\lambda} - \frac{\lambda^2\eps(s)}{(\alpha(s)+\beta\lambda)^{3}}+o(\eps(s))\diff s\right)\right).
		\end{multline*}
		As a consequence, the second term in \eqref{eq::formofx} will decrease to $0$ faster than the first-one (let alone the additional $\eps(t)$ decay that we have just discussed). The asymptotic behavior of $x$ will thus be governed by the first term in \eqref{eq::formofx}.

		Let us now focus on the first term in \eqref{eq::formofx}. Observe that
		$\exp\left(\int_0^t -\frac{\lambda}{\alpha(s)+\beta\lambda}\diff s\right)$ is exactly the decay of $x_{LM}$ in \eqref{eq::closeformLMN}. Thus, we have proved that there exists $C>0$, such that the following asymptotic equivalence holds,
		\begin{multline*}
			A\frac{1+\delta_1(t)}{r(t)^{1/4}}\frac{\sqrt{\alpha(t)+\beta\lambda}}{\sqrt{\eps(t)}}\exp\left(\int_0^t -\frac{\lambda}{\alpha(s)+\beta\lambda} - \frac{\lambda^2\eps(s)}{(\alpha(s)+\beta\lambda)^{3}}+o(\eps(s))\diff s\right)
			\\ \sim_{+\infty}
			x_{LM}(t)C\exp\left(\int_0^t - \frac{\lambda^2\eps(s)}{(\alpha(s)+\beta\lambda)^{3}}+o(\eps(s))\diff s\right),
		\end{multline*}
		which proves the first part of \eqref{eq::equivx}. The second equivalence in \eqref{eq::equivx} is obtained using the following identity,
		\begin{equation}\label{eq::rewritteLM}
			\int_0^t -\frac{\lambda}{\alpha(s)+\beta\lambda}\diff s = \int_0^t -\frac{1}{\beta} + \frac{\alpha(s)}{\beta(\alpha(s)+\beta\lambda)}\diff s = -\frac{t}{\beta} + \int_0^t  \frac{\alpha(s)}{\beta(\alpha(s)+\beta\lambda)}\diff s
		\end{equation}
		and $e^{-t/\beta}$ is precisely the rate at which $x_N$ decreases. So \eqref{eq::equivx} holds.

		It finally remains to deduce the conclusions of the theorem from \eqref{eq::equivx}.
		\\--\ Regarding the comparison with $x_{LM}$, the integral $\int_0^t - \frac{\lambda^2\eps(s)}{(\alpha(s)+\beta\lambda)^{3}}+o(\eps(s))\diff s$ converges if and only if $\eps$ is integrable on $[0,+\infty[$, and diverges to $-\infty$ when $\eps$ is not. So $x$ converges to $0$ at least as fast as $x_{LM}$ and faster when $\eps$ is not integrable.
		\\-- As for the comparison with $x_N$, if $\alpha(s)>\eps(s)\geq 0$ for all $s\geq 0$, then the integral $\int_0^t \frac{\alpha(s)}{\beta(\alpha(s)+\beta\lambda)}- \frac{\lambda^2\eps(s)}{(\alpha(s)+\beta\lambda)^{3}}+o(\eps(s))\diff s$ is convergent in $+\infty$ if and only if $\alpha$ is integrable and diverges to $+\infty$ when $\alpha$ is non-integrable. So when $\alpha$ is integrable, the speed of convergence of $x$ is the same as that of $x_N$. When $\alpha$ is not integrable, the convergence to $0$ is slower but still holds. Indeed, for all $s\geq 0$
		$\frac{\alpha(s)}{\beta(\alpha(s)+\frac{1}{\beta})}< \frac{\alpha(s)}{\beta\alpha(s)}= \frac{1}{\beta}$. Thus for all $t>0$, $-\frac{t}{\beta} + \int_0^t \frac{\alpha(s)}{\beta(\alpha(s)+\beta\lambda)}\diff s<0$.
		\\-- Finally, the comparison with $x_N$ in the case $\eps(s)> \alpha(s)$ is exactly the same as the comparison with $x_{LM}$ using \eqref{eq::rewritteLM}.
	\end{proof}

	\begin{remark}
		As discussed in the beginning of this section, the main motivation for studying quadratic functions is that strongly convex functions with Lipschitz continuous gradient can be tightly both upper- and lower-bounded by quadratic functions around their minimizers. The intuitive attempt to extend our analysis to this class of functions consists therefore in applying Theorem~\ref{thm::MainResLG} to the upper and lower quadratic bounds. Such analysis is hence only possible for initial conditions close-enough to the minimizer, and even in that case we did not manage to derive precise-enough asymptotic estimates for the solution of \eqref{eq::VMDINAVD} using this strategy. Whether this is possible or not remains an open question.
	\end{remark}

	\section{Numerical Experiments}\label{sec::exp}
	\begin{figure}[htb]
		\begin{minipage}{\linewidth}
			\centering
			\includegraphics[width=0.7\linewidth]{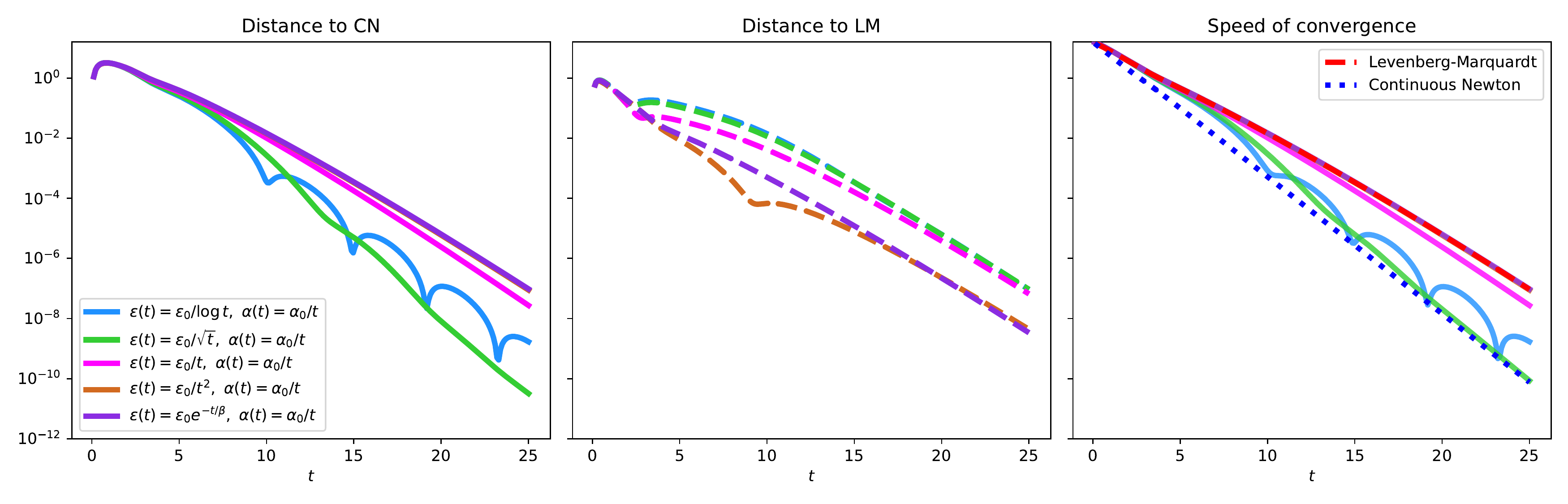}
			\\
			\includegraphics[width=0.7\linewidth]{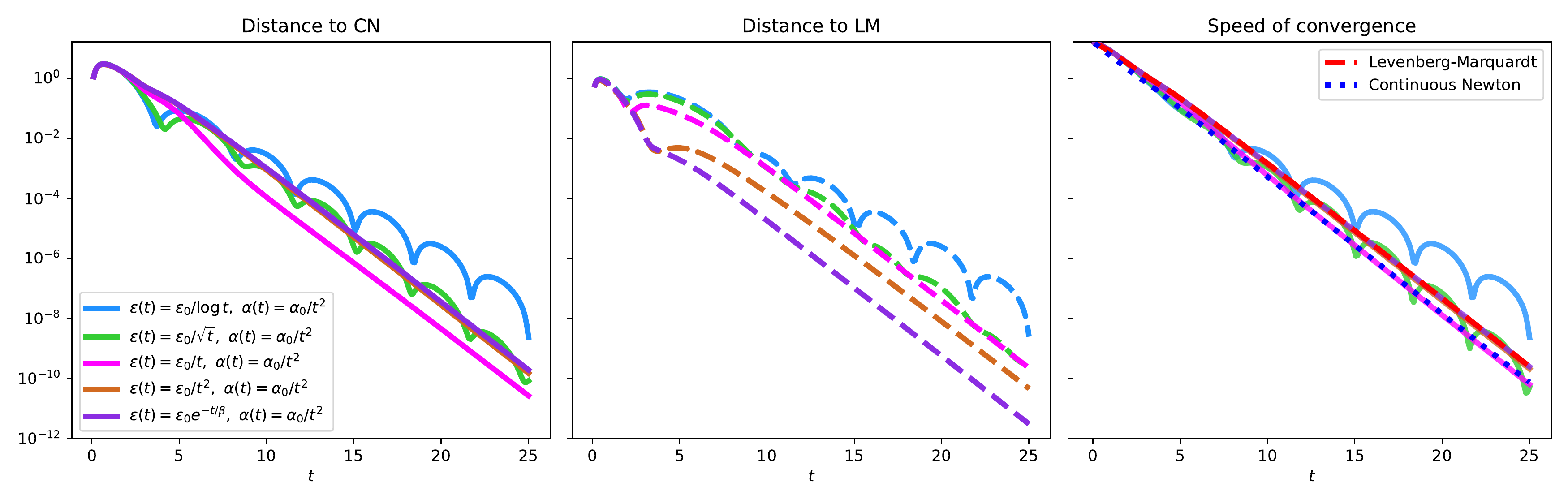}
		\end{minipage}
		\caption{\label{fig::expmt}
			Comparison of the solutions $x_N$, $x_{LM}$ and $x$ of \eqref{eq::CN}, \eqref{eq::LM} and \eqref{eq::VMDINAVD} respectively, for a strongly convex function of the form $f(x)=e^{-\Vert x\Vert^2} + \frac 1 2 \norm{Ax}^2 $.
			Left figures: distance $\norm{x(t)-x_N(t)}$ versus time $t$, each curve corresponds to a different choice of $\eps$; middle figures: distance $\norm{x(t)-x_{LM}(t)}$, again for several $\eps$. Right figures: distance to the optimum $x^\star$ for reference, $x_N$ and $x_{LM}$ are in dotted and dashed lines, other curves correspond to \eqref{eq::VMDINAVD} for several choices of $\eps$. The brown curve is often hidden behind the purple (and sometimes the pink) curve.
			Top and bottom rows show results respectively for non-integrable and integrable viscous dampings $\alpha$.
			The theoretical bounds from Theorem~\ref{thm::mainGenResult} are only displayed on Figure~\ref{fig::LP} below, for the sake of readability.}
	\end{figure}

	\begin{figure}[htb]
		\begin{minipage}{\linewidth}
			\centering
			\includegraphics[width=0.7\linewidth]{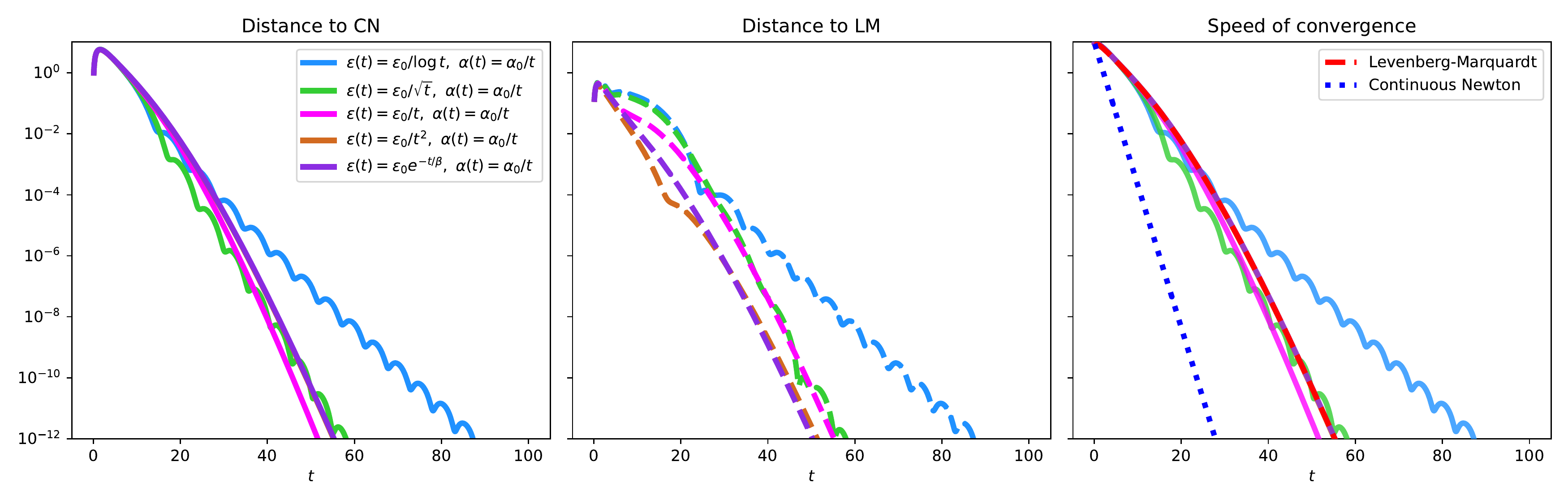}
			\includegraphics[width=0.7\linewidth]{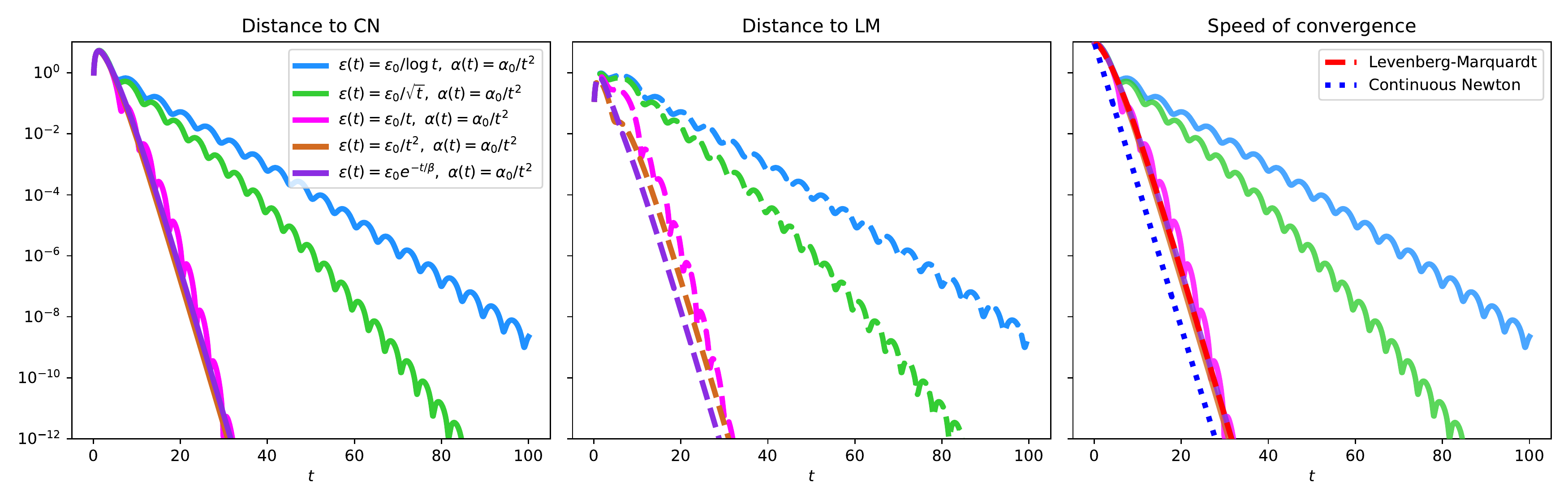}
		\end{minipage}
		\caption{\label{fig::logsumexpexp} Similar experiment and figures as those described in Figure~\ref{fig::expmt}, but for the function $f(x)=\log\left(\sum_{i=1}^n e^{x_i} + e^{-x_i}\right) + \frac 1 2 \norm{Ax}^2 $.}
	\end{figure}

	\begin{figure}[htb]
		\begin{minipage}{\linewidth}
			\centering
			\includegraphics[width=0.7\linewidth]{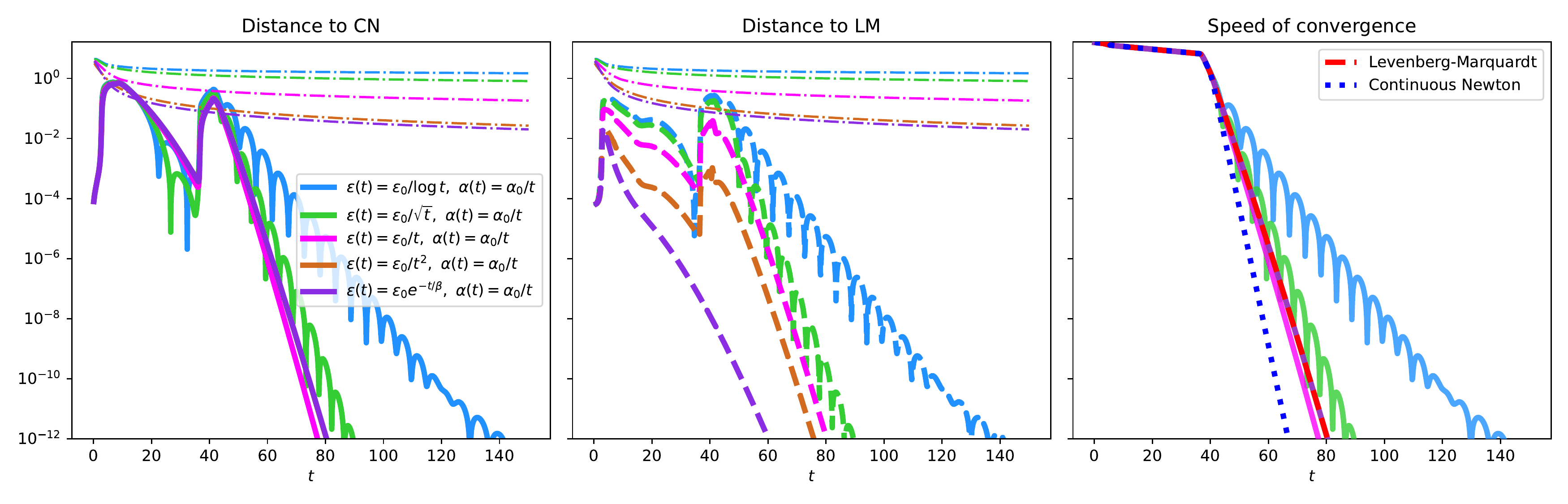}
			\\
			\includegraphics[width=0.7\linewidth]{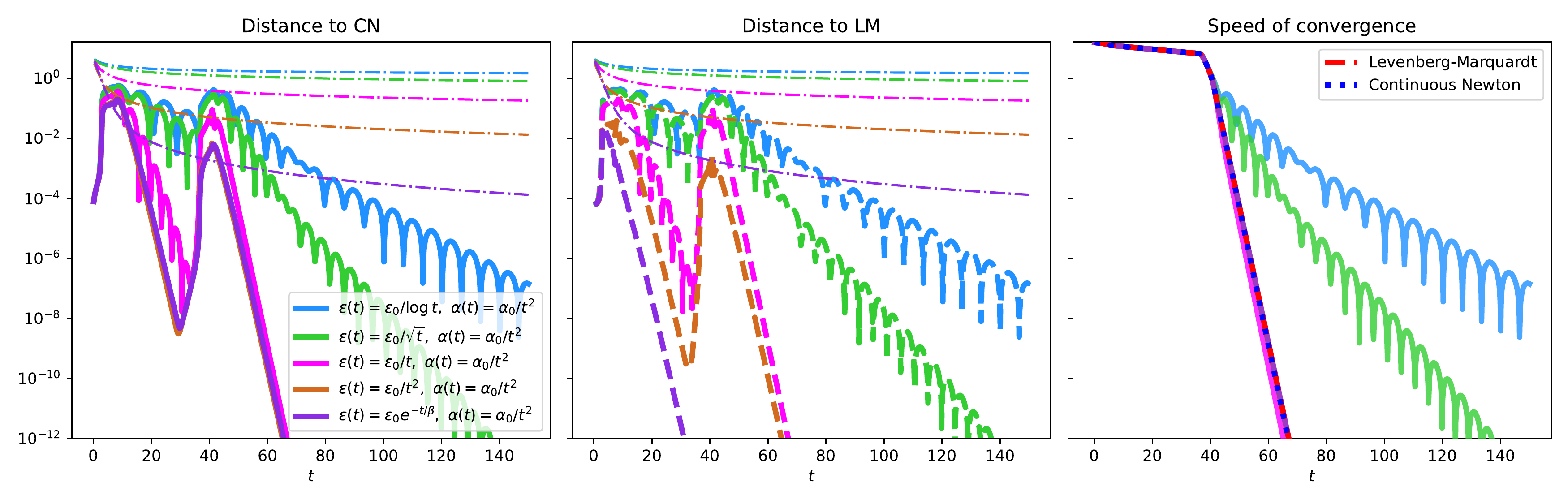}
		\end{minipage}
		\caption{\label{fig::LP} Similar experiment and figures as those described in Figure~\ref{fig::expmt}, but for the function $f(x)=\sum_{i=1}^n x_i^{50} + \frac 1 2 \norm{Ax}^2 $.
			The thin ``dash dotted'' curves represent approximations of the theoretical bounds from Theorem~\ref{thm::mainGenResult} for each choice of $(\eps,\alpha)$ considered.}
	\end{figure}

	\begin{figure}[htb]
		\centering
		\includegraphics[width=0.25\linewidth]{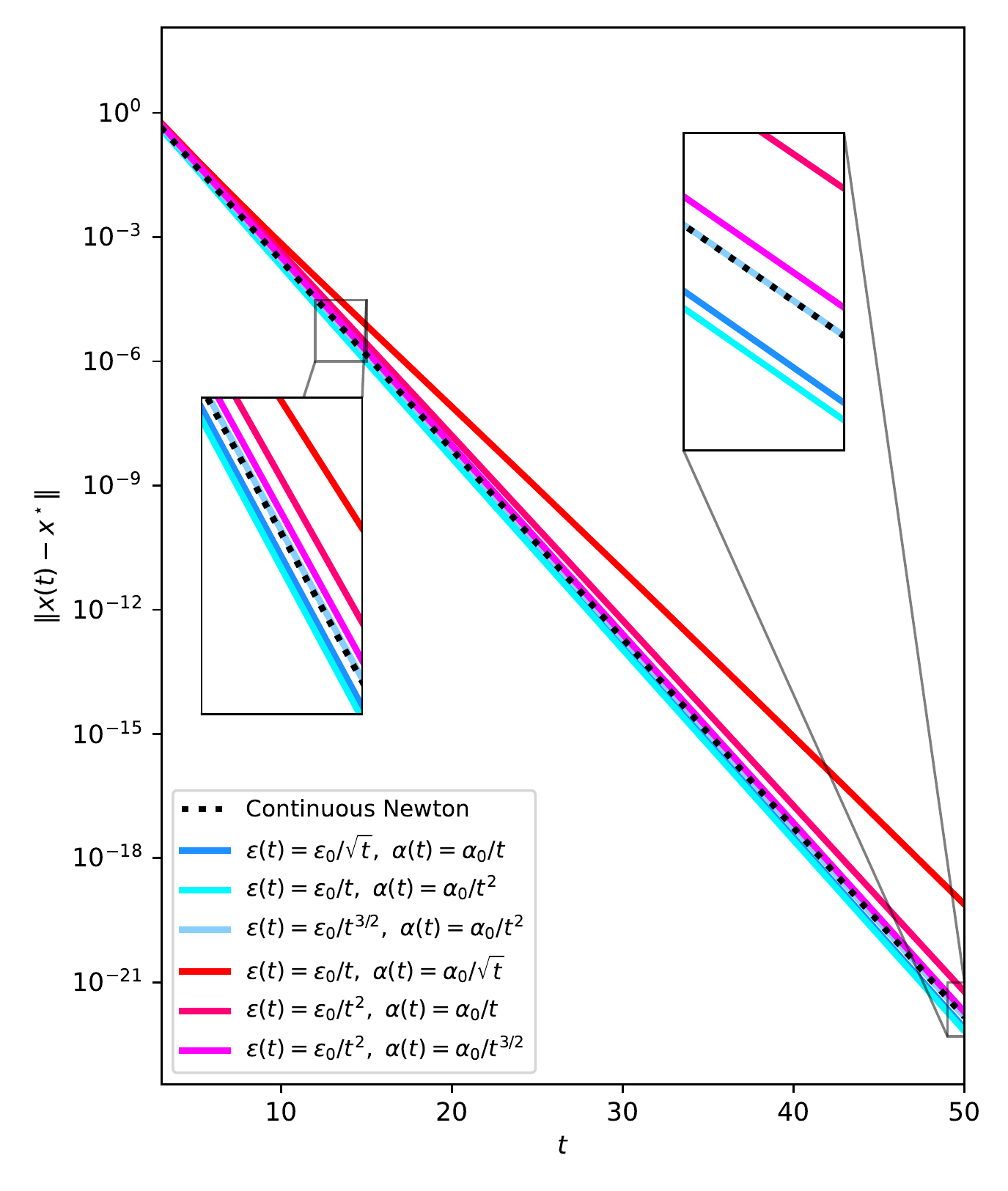} \hspace{0.1cm}\includegraphics[width=0.25\linewidth]{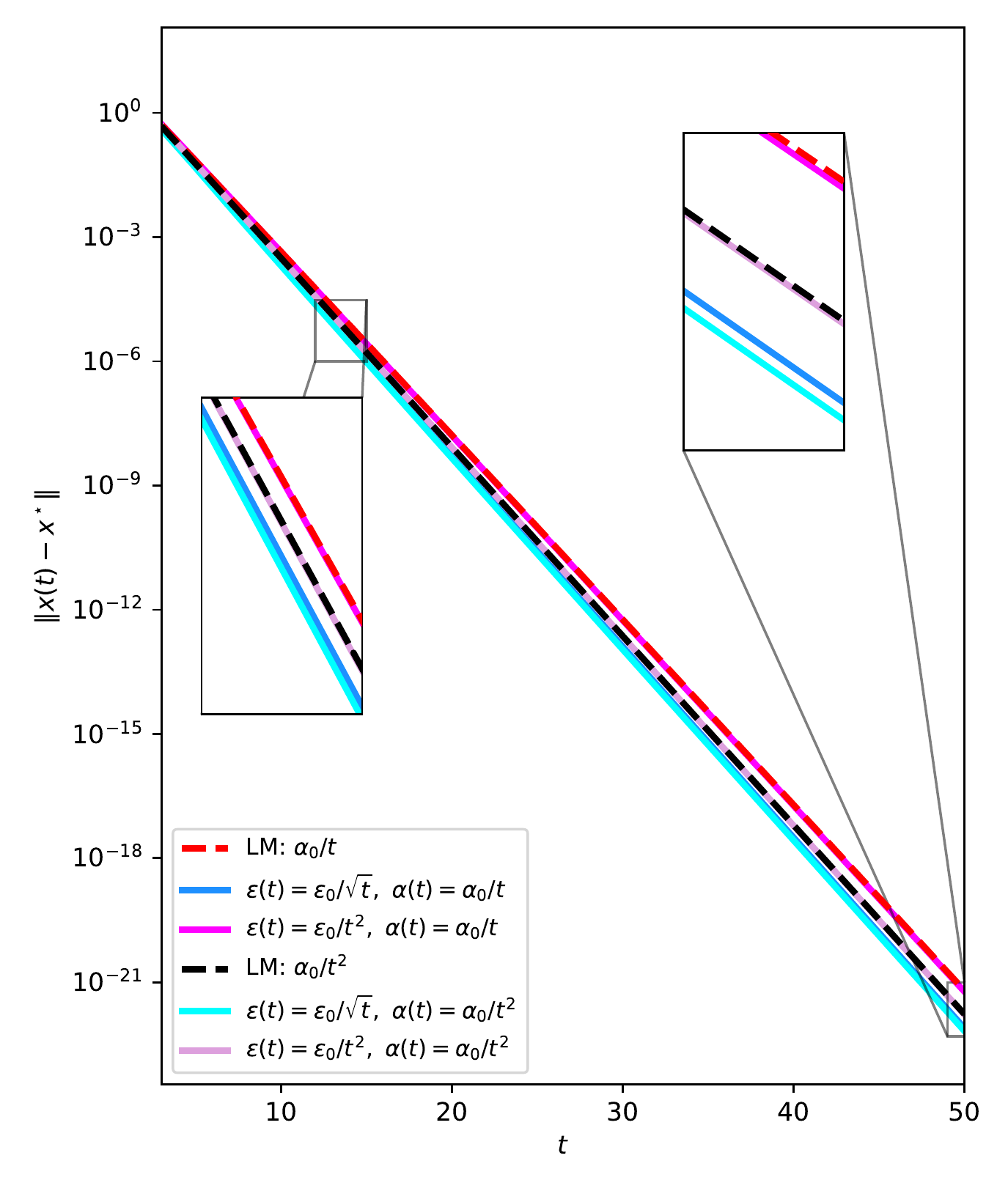}
		\caption{\label{fig::quadspeed} Numerical validation of Theorem~\ref{thm::MainResAsymptotic}: distance to the optimum $x^\star$ as a function of time on a quadratic function $f(x)=\frac{1}{2}\norm{Ax}^2$. Left: speed comparison w.r.t.\ \eqref{eq::CN} for several choices of $\eps$ and $\alpha$. Right: Comparison with LM for $\alpha$ integrable or not and several choices of $\eps$. Shades of blue represent cases where $\eps(t)>\alpha(t)$ while shades of red represent the opposite setting.}
	\end{figure}

	\begin{figure}[htb]
		\centering
		\includegraphics[width=0.24\linewidth]{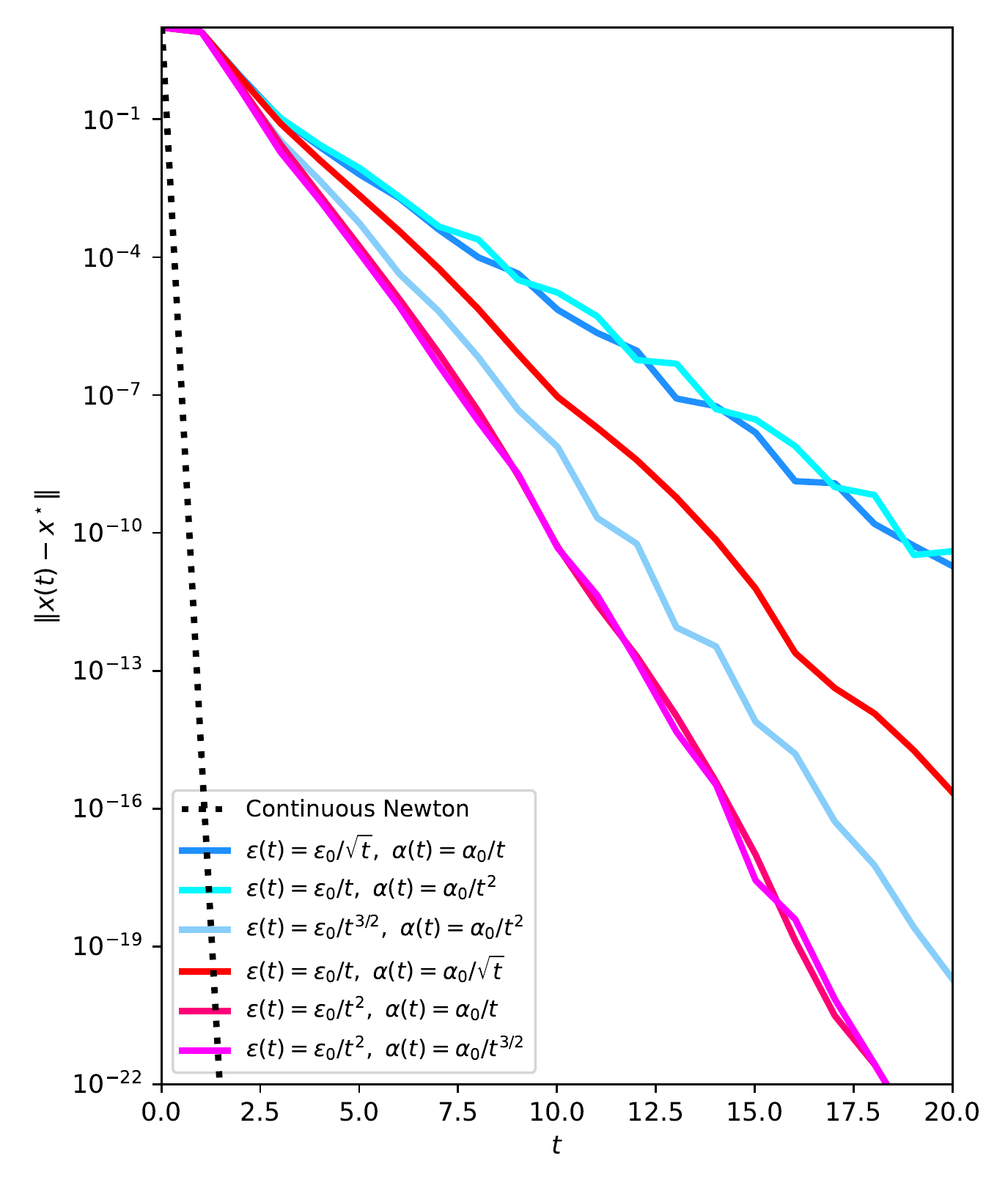}
		\includegraphics[width=0.24\linewidth]{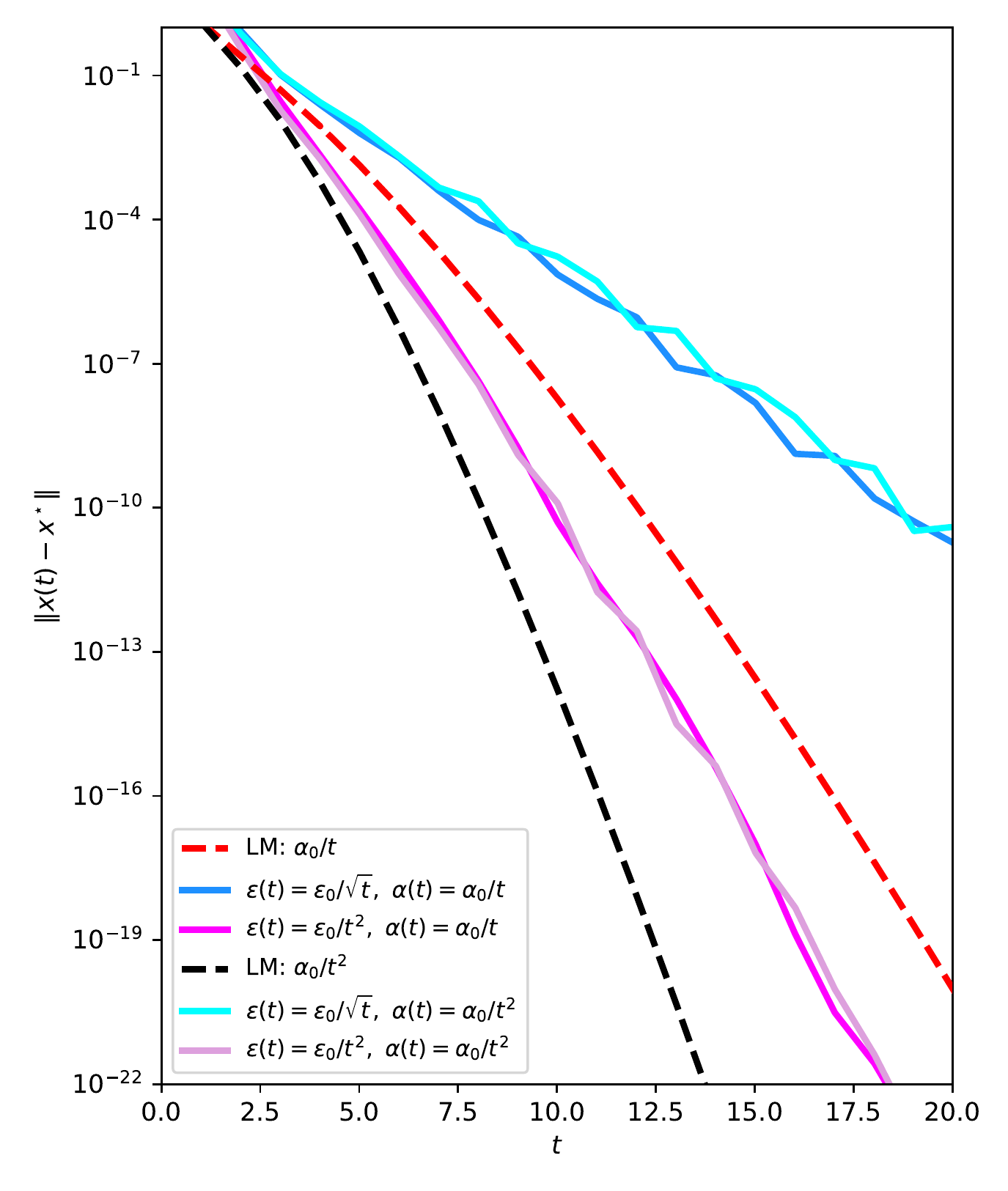}
		\includegraphics[width=0.24\linewidth]{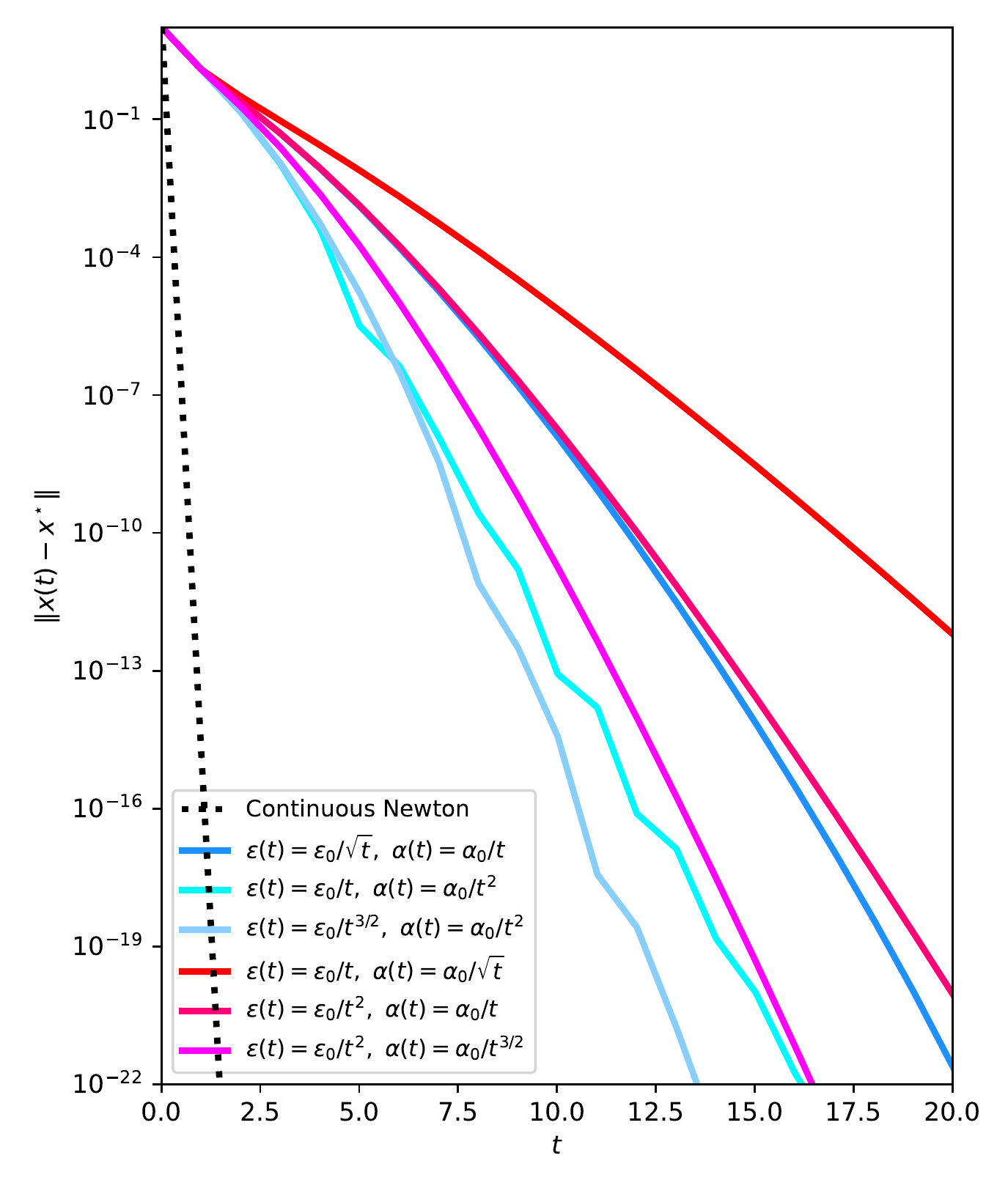}
		\includegraphics[width=0.24\linewidth]{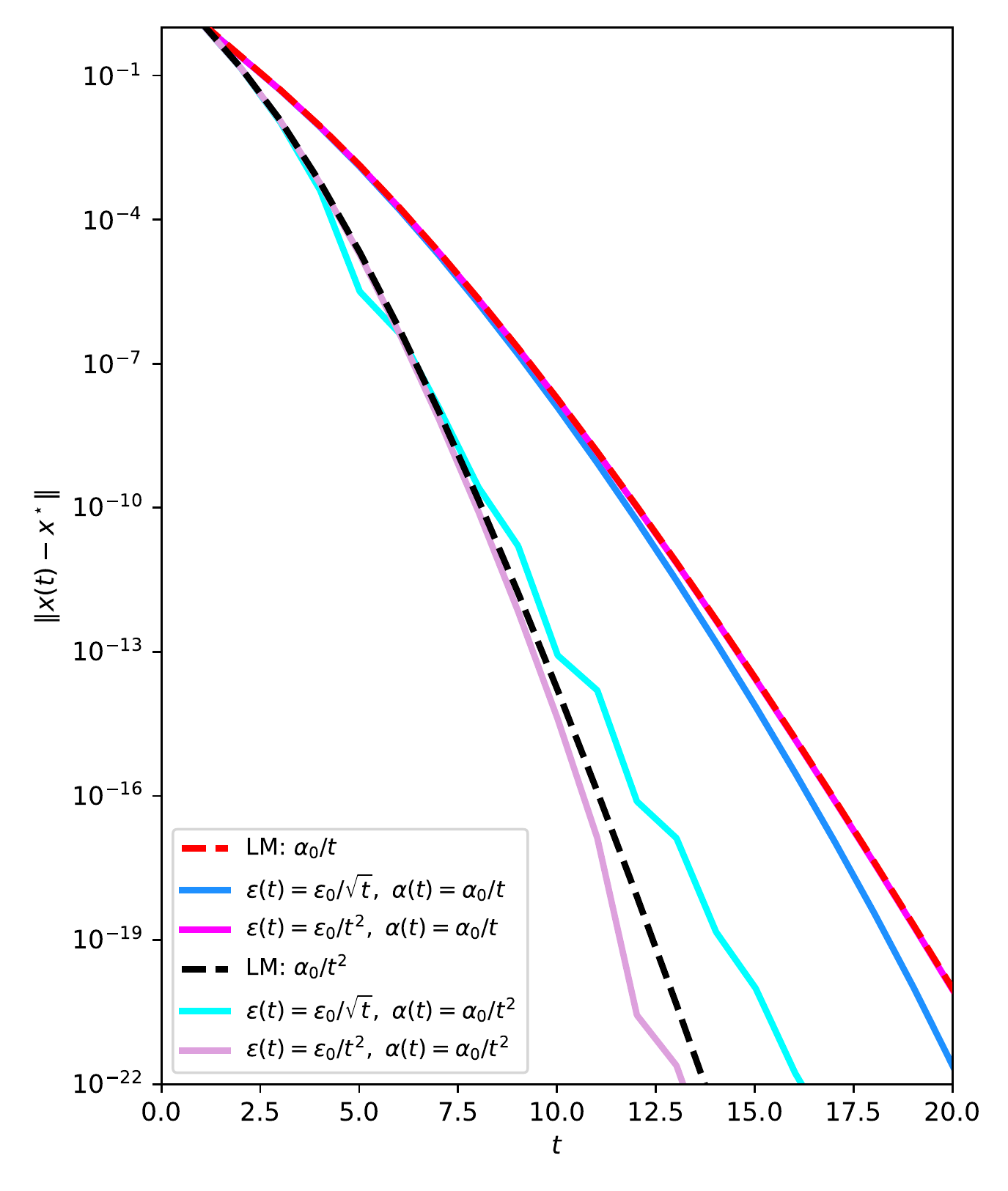}
		\caption{\label{fig::quadspeed2} Same experiments as in Figure~\ref{fig::quadspeed} but in the setting $\gamma=\beta=1$ (so that Newton's method converges in one iteration), for large values of $\eps_0$ (left figures) and small $\eps_0$ (right). The use of a large step-size $\gamma$ makes this setting out of the scope of our theoretical results. Due to fast convergence, different behaviors can be observed for large values of $\eps_0$ and smaller ones.}
	\end{figure}
	We present two set of experiments that illustrate our main results from Sections~\ref{sec::Control} and~\ref{sec::Quads}. We first detail the general methodology.
	\subsection{Methodology}
	We compare the solutions of \eqref{eq::CN}, \eqref{eq::LM} and \ref{eq::VMDINAVD} obtained for strongly convex functions in dimension $n=100$. Since closed-form solutions are not available, they are estimated via discretization schemes with small\footnote{In practice Newton's method would converge faster when used with $\gamma=\beta=1$, here we take a smaller step-size $\gamma$ to more accurately approximate the solutions of the ODEs.} step-sizes $\gamma=10^{-1}$.
	All ODEs are initialized at $x_0\in\R^n$, where each coordinate of $x_0$ belongs to $\{-1,1\}$. For all functions this means that $x_0$ is approximately at distance $\sqrt{n}$ of $x^\star$, hence not very close to $x^\star$. We then approximated the ODEs on time intervals that are long-enough to observe both non-asymptotic and asymptotic convergence behaviors.
	We used Euler semi-explicit schemes obtained by solving linear systems, for the sake of stability.
	The resulting algorithms are detailed in Appendix~\ref{supp::Exp}.
	\subsection{First Experiment: Distance between Trajectories}
	We begin with an empirical validation of the results of Section~\ref{sec::Control} on the distance between $x$, $x_{LM}$ and $x_N$.
	Each of Figures~\ref{fig::expmt}, \ref{fig::logsumexpexp} and~\ref{fig::LP}
	corresponds to a different strongly convex function, specified below its corresponding figure. To ensure strong convexity, each function contains a quadratic term of the form $\Vert Ax\Vert^2$, where $A$ is symmetric positive definite.

	Several observations can be made from the numerical results, but we first note on the right plots of Figures~\ref{fig::expmt} to~\ref{fig::LP} that $x_N$ always converges asymptotically linearly (i.e., exponentially fast). This is also the case for $x$ and $x_{LM}$ in some (but not all) cases. This is important because $\norm{x(t)-x_N(t)}\leq \norm{x(t)-x^\star} + \norm{x_N(t)-x^\star}$, so if both $x$ and $x_N$ converge linearly, then the bounds of Theorems~\ref{thm::mainEpsLarge} and~\ref{thm::mainGenResult} need not be asymptotically tight. That being said, the strength of these results is to be non-asymptotic and this is highlighted by the experiments as we now explain.

	The left and middle plots of Figures~\ref{fig::expmt}, \ref{fig::logsumexpexp} and~\ref{fig::LP} are consistent with Theorems~\ref{thm::mainEpsLarge} and~\ref{thm::mainGenResult},  since the distances $\norm{x(t)-x_{N}(t)}$ and $\norm{x(t)-x_{LM}(t)}$ decrease relatively fast to zero. Again, when $x$ converges rapidly to $x^\star$ this is not very insightful, however, the main interest of our theorems appears on the left of Figures~\ref{fig::logsumexpexp} and~\ref{fig::LP}: the blue and green curves, corresponding to slowly decaying choices of $\eps$, converge more slowly than other trajectories. However, when taking faster decays, we recover fast convergence and closeness to $x_N$ (this is particularly true for the purple curve).
	Very similar observations are made w.r.t.\ $x_{LM}$ on the middle plots. Despite not being stated in the theorems of Section~\ref{sec::Control}, the experiments match the intuition that when $\eps> \alpha$, $x$ may be closer to $x_N$ and when $\eps\leq\alpha$, $x$ would rather be closer to $x_{LM}$. This is more noticeable on the top rows of the figures, where $\alpha$ is not integrable.

	Figure~\ref{fig::LP} suggests that the bounds in Theorem~\ref{thm::mainGenResult} are rather tight for small $t$, since, for example, the blue and green curves on the left show a relatively slow vanishing of $\norm{x(t)-x_N(t)}$ for slowly decaying $\eps$. The bounds seem however often too pessimistic for large $t$, for which the second part of our study provides better insights (see Section~\ref{sec::Quads} and below).
	Interestingly, slow decays of $\eps$ might result in faster convergence for $x$ than fast decays (and also faster convergence than $x_{LM}$), notably on Figure~\ref{fig::expmt}. We also note that $\eps(t)=\eps_0/t$ combined either with $\alpha(t)=\alpha_0/t$ or $\alpha(t)=\alpha_0/t^2$ seems to very often yield fast convergence in these experiments.

	\subsection{Second Experiment: Empirical Validation of Theorem~\ref{thm::MainResAsymptotic}\label{sec::secondexp}}
	We now turn our attention to the solutions $x$, $x_N$ and $x_{LM}$ for a quadratic function of the form $f(y)=\frac{1}{2}\Vert Ay\Vert^2$, $y\in\R^n$, and for several choices of $\eps$ and $\alpha$. The results in Figure~\ref{fig::quadspeed} exactly match  the expected behavior summarized in Table~\ref{tabl::speedsummary}. Indeed, looking first at the right-hand side of Figure~\ref{fig::quadspeed}, $x$ is as fast as the corresponding\footnote{That is, the solution of \eqref{eq::LM} for the same $\alpha$ as that considered for \eqref{eq::VMDINAVD}.} $x_{LM}$ when $\eps$ is not integrable and regardless of $\alpha$, and $x$ is faster when $\eps$ is non-integrable. Then on the left-hand side, when comparing to $x_N$, $x$ is slower in settings where $\alpha$ is larger than $\eps$ and non-integrable (red curves), or almost as fast when $\alpha$ is integrable (pink curve). However, acceleration w.r.t.\ to $x_N$ is indeed achieved for non-integrable $\eps$ regardless of $\alpha$ (first-two blue curves), and the rate is the same as that of $x_N$ when $\eps$ is integrable (third blue curve). Finally, on Figure ~\ref{fig::quadspeed2} we empirically investigate the case of large step-sizes $\gamma$ and take $\gamma=\beta=1$ (optimal for Newton's method on quadratics). Due to large step-sizes our theoretical results do not hold and the choice of $\eps_0$ matters more. This evidences a trade-off between $\eps_0$ and $\gamma$ in the case of discrete algorithms which is not present in the analysis of ODEs and is worth investigating in a future work.

	\section{Conclusions and Perspectives}
	We introduced a general ODE \ref{eq::VMDINAVD} featuring variable mass, and provided a deep understanding on the behavior of its solutions w.r.t.\ time dependent control parameters $\eps$ and $\alpha$, both, asymptotically and non-asymptotically. We can conclude that \eqref{eq::VMDINAVD} is indeed of  (regularized) Newton type, since it can be controlled to be  close to both \eqref{eq::CN} and \eqref{eq::LM}. Yet we also showed that \eqref{eq::VMDINAVD} fundamentally differs from the other two dynamics in its nature. In particular, Theorem~\ref{thm::MainResAsymptotic} and the numerical experiments emphasized that $\eps$ and $\alpha$ can accelerate (or slow down) \eqref{eq::VMDINAVD} w.r.t.\ \eqref{eq::CN} and \eqref{eq::LM}. We also note that our bounds in Theorems~\ref{thm::mainEpsLarge} and~\ref{thm::mainGenResult} seem relatively tight, in particular for functions with large gradients (see Figure~\ref{fig::LP}).
	Our contribution yields a complete and satisfying picture on the relation between the three systems, which was only partially understood. We believe that our results build a strong foundation for the development of algorithms that combine the best properties of first- and second-order optimization methods.

	As for future work, we showed that \eqref{eq::VMDINAVD} is promising from an optimization perspective. So far we approximated solutions of \eqref{eq::VMDINAVD} via schemes that required solving linear systems (this is also true for \eqref{eq::CN} and \eqref{eq::LM}). Our new understanding on $(\eps,\alpha)$ paves the way towards designing new Newton-like algorithms with a significantly reduced computational cost, which is crucial for large-scale optimization. Another open question is whether it is possible to preserve the properties evidenced in this work when $\eps$ is defined in a closed-loop manner (formally depending on $x$ rather than on $t$). Finally, it would be worth investigating how the current work can be extended to general convex and/or non-smooth functions.

	\FloatBarrier

	\section*{Acknowledgment}
	C. Castera, J. Fadili and P. Ochs are supported by the ANR-DFG joint project TRINOM-DS under numbers  ANR-20-CE92-0037-01 and OC150/5-1. The numerical experiments were made thanks to the development teams of the following libraries: Python \citep{rossum1995python}, Numpy \citep{walt2011numpy} and
	Matplotlib \citep{hunter2007matplotlib}.

	\newpage
	\appendix
	\begin{flushleft}
		{\LARGE \textbf{Appendices}}
	\end{flushleft}


	\section{Equivalent First-order System and Existence of Solutions}\label{app:FOS}
	\subsection{First-order Equivalent Formulation}
	We reformulate \eqref{eq::VMDINAVD} as a system of ODE involving only first-order time derivatives and the gradient of $f$. For this purpose, notice that for all $t>0$ \eqref{eq::VMDINAVD} can be rewritten as,
	\begin{align}\label{eq::intVMDINAVD}
		\begin{split}
			\frac{\diff}{\diff t} \left[\eps(t)\dx(t)\right] + \beta \frac{\diff}{\diff t}\nabla f(x(t)) + \alpha(t)\dx(t) - \eps'(t)\dx(t)+ \nabla f(x(t)) = 0, \quad t\geq0.
		\end{split}
	\end{align}
	We then integrate \eqref{eq::intVMDINAVD} for all $t\geq 0$,
	\begin{equation}\label{eq::intVMDINAVD2}
		\eps(t)\dx(t) + \beta \gf(x(t)) - \eps_0\dx_0 - \beta \gf(x_0) + \int_{0}^t (\alpha(s)-\eps'(s))\dx(s)+ \nabla f(x(s)) \diff s= 0.
	\end{equation}
	For all $t\geq 0$, we define the variable,
	$$z(t)=\int_{0}^t (\alpha(s)-\eps'(s))\dx(s)+ \nabla f(x(s)) \diff s - \eps_0\dx_0 - \beta \gf(x_0).$$
	We differentiate $z$, for all $t>0$,
	$\dot z(t)=(\alpha(t)-\eps'(t))\dx(t)+ \nabla f(x(t))$, so that we can rewrite \eqref{eq::intVMDINAVD2} as,
	\begin{equation*}
		\begin{cases}
			\eps(t)\dx(t) + \beta \gf(x(t)) + z(t)= 0
			\\\dot z(t) - (\alpha(t)-\eps'(t))\dx(t) - \nabla f(x(t)) = 0
		\end{cases}, \quad t\geq0.
	\end{equation*}
	We substitute the first line in the second-one,
	\begin{equation}\label{eq::intDINAVD4}
		\begin{cases}
			\eps(t)\dx(t) + \beta \gf(x(t)) + z(t)= 0
			\\\beta\dot z(t) - \beta(\alpha(t)-\eps'(t) - \frac 1 \beta \eps(t))\dx(t) + z(t) = 0
		\end{cases}, \quad t\geq0.
	\end{equation}
	To ease the readability, we recall the notation $\nu(t) = \alpha(t)-\eps'(t) - \frac 1 \beta \eps(t)$ from Section~\ref{sec::prelim}. Then define for all $t\geq 0$, $y(t)= z(t)- \nu(t) x(t)$, and differentiate,
	$\dot y(t)=\dot z(t)-\nu(t)\dx(t) - \nu'(t)x(t)$.
	We finally rewrite \eqref{eq::intDINAVD4} as,
	\begin{equation*}
		\begin{cases}
			\eps(t)\dx(t) + \beta\gf(x(t)) &+ \nu(t)x(t) + y(t)= 0
			\\\dot y(t)+ \nu'(t)x(t) &+ \frac{\nu(t)}{\beta} x(t) + \frac 1 \beta y(t) = 0
		\end{cases}.{}
	\end{equation*}
	which is \eqref{eq::gVMDINAVD}. Finally, the initial condition on $y$ is $$y(0) = z(0)-\nu(0)x(0) = -\eps_0\dx_0 - \beta\gf(x_0) - (\alpha_0-\eps'_0-\frac{1}{\beta}\eps_0)x_0.$$

	\begin{remark}
		Notice that the quantity $\nu(t)= \alpha(t)-\eps'(t) - \frac 1 \beta \eps(t)$ involved in \eqref{eq::gVMDINAVD} also plays a key role in our analysis of Section~\ref{sec::Control}, see e.g., \eqref{eq::boundInt2}. In particular the sign of $\nu(t)$ changes the nature of \eqref{eq::VMDINAVD} and is related to Assumption~\ref{ass:largeepsilon}.
	\end{remark}

	\subsection{Local Solutions are Global}\label{app::localisglobal}
	Using the formulation \eqref{eq::gVMDINAVD}, we proved local existence and uniqueness of solutions of \eqref{eq::VMDINAVD} in Section~\ref{sec::prelim}. Using the same notations, we justify that the local solution $(x,y)$ actually exists globally.
	According to Lemma~\ref{lem::LyapLargeEps}, the Lyapunov function $U(t) = \frac{\eps(t)}{2} \norm{\dot{x}(t)}^2 + f(x(t))-f(x^\star)$ is non-negative and decreasing. Thus, it is uniformly bounded on $\R_+$ and the same holds for $t\mapsto f(x(t))$ since for all $t\geq 0$, $U(t)\geq f(x(t))$. Then, $f$ is coercive by assumption, so $x$ is uniformly bounded on $\R_+$ (otherwise $f(x)$ could not remain bounded).
	We now prove that $y$ is also uniformly bounded. From \eqref{eq::gVMDINAVD}, for all $t>0$, $\dot y(t) = -\frac 1 \beta y(t) - (\frac{\nu(t)}{\beta} + \nu'(t))x(t)$ so we can use the following integrating factor,
	$$y(t) = e^{-\frac{t}{\beta}}y_0 - e^{-\frac{t}{\beta}}\int_0^t \frac 1 \beta e^{\frac{s}{\beta}}(\nu(s) + \beta\nu'(s))x(s)\diff s.$$
	Using triangle inequalities, for all $t\geq 0$,
	\begin{multline}\label{eq::boundy}
		\Vert y(t)\Vert \leq e^{-\frac{t}{\beta}}\Vert y_0\Vert + \sup_{s\geq 0}  \Vert (\nu(s) + \beta\nu'(s)) x(s)\Vert e^{-\frac{t}{\beta}} \int_0^t \frac 1 \beta e^{\frac{s}{\beta}}\diff s
		\\
		\leq \Vert y_0\Vert + \sup_{s\geq 0}  \Vert (\nu(s) + \beta\nu'(s)) x(s)\Vert.
	\end{multline}
	Using the definition of $\eps$ and $\alpha$ from Sections~\ref{sec::pbstatement} and~\ref{sec::prelim}, observe that $\eps$, $\alpha$, $\eps'$ and $\alpha'$ are all bounded on $\R_+$, and $\eps''$ is assumed to be bounded. So $\nu$ and $\nu'$ are bounded, and since we also proved that $x$ is uniformly bounded on $\R_+$, we deduce from \eqref{eq::boundy} that $y$ is uniformly bounded as well. Hence, the unique local solution $(x,y)$ is global.

		\subsection{Existence and Uniqueness for Non-smooth Functions}\label{app:nonsmooth}
		Following Remark~\ref{rem::nonsmooth}, we here provide the main arguments to extend \eqref{eq::VMDINAVD} to the non-smooth setting. Assume, in this section only, that $f$ is convex, proper, lower semi-continuous, and (without loss of generality) that $\mathrm{dom}(f)=\R^n$. Consider
		\begin{equation}\label{eq::nonsmoothVM}
			\begin{cases}
				\eps(t)\dx(t) + \beta\partial f(x(t)) + \nu(t)x(t) + y(t)&\ni 0,
				\\\dot y(t)+ \nu'(t)x(t) + \frac{\nu(t)}{\beta} x(t) + \frac 1 \beta y(t) &= 0,
			\end{cases} \quad\text{ for a.e. } t\geq 0,
		\end{equation}
		where $\partial f$ is the sub-differential of $f$.
		Note that when $f$ is differentiable, \eqref{eq::nonsmoothVM} boils down to \eqref{eq::gVMDINAVD}.
		An absolutely continuous mapping $(x,y):\R_+\to\R^n\times\R^n$ that satisfies \eqref{eq::nonsmoothVM} is called solution.
		Remark that defining $\tilde G(t,x(t),y(t)) = (\frac{\nu(t)x(t)+y(t)}{\eps(t)},\nu'(t)x(t)+\frac{\nu(t)}{\beta} x(t) + \frac{1}{\beta}y(t) )$ and  $F(t,x(t),y(t)) = (\frac{\beta}{\eps(t)} f(x(t)),0)$, \eqref{eq::nonsmoothVM} rewrites $(\dx(t),\dot y(t)) + \partial F(t,x(t),y(t)) + \tilde G(t,x(t),y(t))\ni 0$. Then under our assumptions on $\eps$ and $\alpha$, $\tilde G(t,\cdot,\cdot)$ is Lipschitz continuous which allows showing existence of a solution via the theory of non-linear semigroups; see \cite[Proposition~3.12]{brezis1973ope}. Note that the chain rule on $(x,y)$ holds for a.e. $t\geq 0$ thanks to absolute continuity, which allows extending our Lyapunov analysis as well.

\section{Generalization to Strictly Convex Functions}\label{app::strictconvex}
		Following Remark~\ref{rem::extensions}, we detail how to generalize Section~\ref{sec::Control} beyond strongly convex functions.
		Assume, here only, that $f$ is strictly convex. Since $f$ is continuous and the trajectories are contained in some compact set $\mathsf{K}_0$ (see the proof of Theorem~\ref{thm::mainEpsLarge}), it follows that $f$ is uniformly convex on $\mathsf{K}_0$; see \cite[Proposition~10.15]{bauschke2011convex}. This implies that $\nabla f$ is uniformly monotone \citep[Example~22.3]{bauschke2011convex} on $\mathsf{K_0}$. That is, there exist an increasing function $\phi:\R_+\to \R_+$ such that $\phi(0)=0$, and $\forall y_1,y_2\in\mathsf{K_0}$, $\langle \nabla f(y_1)-\nabla f(y_2), y_1-y_2\rangle\geq \phi(\norm{y_1-y_2})$. Note that when $f$ is strongly convex, $\phi(s)=\mu_\mathsf{K_0} s^2$, $\forall s\geq 0$.
		We deduce that
		$
		\frac{\phi(\norm{y_1-y_2})}{\norm{y_1-y_2}}  \leq  \norm{\nabla f(y_1)-\nabla f(y_2)}
		$, which allows extending \eqref{eq::boundwithgrad} and our analysis to strictly convex $f$ whose associated $\phi$ is such that $\phi(s)/s\xrightarrow[s\to 0]{}0$. In that case, $\phi$ is called a growth function, and this covers many applications; see \cite{drusvyatskiy2021nonsmooth,ochs2019}.


	\section{Proof of Theorem~\ref{thm::mainGenResult}: Key arguments}\label{supp::GenThm}
	This section is devoted to proving the general result of Section~\ref{sec::Control}. Fix some constants $c_1,c_2>0$ and let $\eps$ and $\alpha$ such that Assumption~\ref{ass:Onlysubexp} is satisfied with these constants.
	Let $x$ be the corresponding solution of \eqref{eq::VMDINAVD}, $x_N$, and $\xlm$ that of \eqref{eq::CN} and \eqref{eq::LM}, respectively.
	Following the same arguments as in the beginning of the proof of Theorem~\ref{thm::mainEpsLarge}, for all $t\geq 0$, $x(t)$, $x_N(t)$ and $x_{LM}(t)$ belong to the bounded set $\mathsf{K}_0$ defined in that proof.
	Since $f$ is $\mu$-strongly convex on $\mathsf{K}_0$, the proof relies again on bounding differences of gradients like in \eqref{eq::boundwithgrad}.
	Following the exact same steps as in the proof of Theorem~\ref{thm::mainEpsLarge}, we know the closed form of $\gf(x_N)$ given in \eqref{eq::gradN}, and an expression for $\gf(x)$ given in \eqref{eq::gradDIN}.
	For all $s\geq 0$, we have the identity,
	\begin{equation}\label{eq::alphatrick}
		e^{ \frac s \beta}\dx(s)  =   e^{ \frac s \beta} \dx(s)  + \frac{1}{\beta} e^{ \frac s \beta}x(s) - \frac{1}{\beta} e^{ \frac s \beta}x(s) = \frac{\diff}{\diff s}(e^{ \frac s \beta}x(s)) - \frac{1}{\beta} e^{ \frac s \beta }x(s),
	\end{equation}
	which we use to perform an integration by parts
	\begin{equation*}
		\int_{0}^{t} e^{ \frac s \beta} \alpha(s) \dx(s)\diff s = \left[\alpha(s)e^{ \frac s \beta }x(s)\right]_0^t -\int_{0}^{t} \left(\alpha'(s)+\frac{\alpha(s)}{\beta}\right) e^{ \frac s \beta }x(s)\diff s.
	\end{equation*}
	Therefore,
	\begin{equation}\label{eq::alphatrick3}
		e^{ -\frac t \beta}\int_{0}^{t} e^{ \frac s \beta } \alpha(s) \dx(s)\diff s = \alpha(t)x(t) - e^{ -\frac t \beta}\alpha_0 x_0 -\int_{0}^{t}e^\frac{s-t}{\beta} \left(\alpha'(s)+\frac{\alpha(s)}{\beta}\right) x(s)\diff s,
	\end{equation}
	and we can substitute in \eqref{eq::gradDIN},
	\begin{multline}\label{eq::gradDIN3}
		\beta\gf(x(t))=\beta e^{-\frac t \beta}\gf(x_0) + e^{-\frac t \beta }\eps_0\dx_0 -\eps(t)\dx(t)  + \int_{0}^{t} e^\frac{s-t}{\beta} \left(\frac 1 \beta \eps(s) + \eps'(s)\right)\dx(s)\diff s
		\\
		- \alpha(t)x(t) + e^{ -\frac t \beta}\alpha_0 x_0  + \int_{0}^{t} e^\frac{s-t}{\beta}\left(\alpha'(s)+\frac{\alpha(s)}{\beta}\right)x(s)\diff s.
	\end{multline}
	One then proves \eqref{eq::BoundXN} by inserting \eqref{eq::gradDIN3} in \eqref{eq::boundwithgrad} and by using the uniform boundedness of $x$ and Assumption~\ref{ass:Onlysubexp}. The proof of \eqref{eq::BoundXLM} follows the same arguments based on the fact that $\forall t\geq 0$,
	\begin{equation*}
		\gf(\xlm(t))=e^{-\frac t \beta}\gf(x_0) - e^{-\frac t \beta}\int_{0}^{t} \frac{1}{\beta}e^{ \frac s \beta} \alpha(s) \dxlm(s)\diff s.
	\end{equation*}


	\section{Integrability of \texorpdfstring{$\varphi$}{φ} and Additional Asymptotic Computations}\label{app::integrability}
	Below we prove Lemma~\ref{lem::phiintegrable}.
	\begin{proof}
		We suppose that Assumptions~\ref{ass:smooth} and~\ref{ass:subexp} hold. As stated in Remark~\ref{rem::olver}, since $\varphi$ is continuous, we only need to check its integrability when $t$ tends to $+\infty$. Let $t>0$, we first establish some useful identities, we omit the dependence on $t$ for the sake of readability.
		\begin{align*}
			p' = \frac{\alpha'\eps-(\alpha+\beta\lambda)\eps'}{\eps^2},
			\quad \text{and}\quad
			p'' = \frac{\alpha''\eps^2-2\alpha'\eps'\eps-(\alpha+\beta\lambda)\eps''\eps + 2(\alpha+\beta\lambda)(\eps')^2}{\eps^3}.
		\end{align*}
		Then,
		\begin{align}\label{eq::factorR}
			\begin{split}
				r  &= \frac{p^2}{4}\left(1+ \frac{2p'}{p^2} - \frac{4\lambda}{\eps p^2}\right) = \frac{(\alpha+\beta\lambda)^2}{4\eps^2}\left(1+\frac{2p' \eps ^2}{(\alpha +\beta\lambda)^2} - \frac{4\lambda\eps }{(\alpha +\beta\lambda)^2}\right)
				\\
				&=\frac{(\alpha +\beta\lambda)^2}{4\eps ^2}\left(1+\frac{2\alpha' \eps }{(\alpha +\beta\lambda)^2} - \frac{2\eps' }{(\alpha +\beta\lambda)} - \frac{4\lambda\eps }{(\alpha +\beta\lambda)^2}\right).
			\end{split}
		\end{align}
		An important consequence of Assumption~\ref{ass:subexp} is that $\vert\eps'(t)\vert = o(\eps(t))$, $\vert\eps''(t)\vert = o(\eps'(t))$ (and the same holds for $\alpha$ w.r.t.\ to its derivatives). Therefore, we deduce from \eqref{eq::factorR} that
		\begin{equation*}
			r(t)\sim_{+\infty}  \frac{(\alpha(t)+\beta\lambda)^2}{4\eps(t)^2},
		\end{equation*}
		and we note that $1/r$ decays at the same speed as $\eps^2$, which will be useful later. In order to study $\varphi$, we now differentiate $r$,
		\begin{align}\label{eq::rprime}
			\begin{split}
				r' &= \frac{p'p}{2}\left(1+ \frac{2p'}{p^2} - \frac{4\lambda}{\eps p^2}\right) + \frac{1}{4}\left( 2p'' - \frac{4(p')^2}{p} + \frac{8\lambda p'}{\eps p}+ \frac{4\lambda\eps'}{\eps^2} \right)
				\\&= \frac{2p'}{p}r + \frac{1}{4}\left( 2p'' - \frac{4(p')^2}{p} +\frac{8\lambda p'}{\eps p} + \frac{4\lambda\eps'}{\eps^2}  \right),\quad \text{and,}
			\end{split}
		\end{align}

		\begin{multline}\label{eq::rsec}
			r'' = 2\frac{p''p-(p')^2}{p^2}r  + \frac{2p'}{p}r'
			\\
			+ \frac{1}{4}\left( 2p''' + 4\frac{(p')^3-2p''p'p}{p^2}+8 \lambda\frac{p''p\eps - (p')^2\eps - p'p\eps'}{\eps^2 p^2} + \frac{4\lambda\eps''}{\eps^2} - \frac{8\lambda(\eps')^2}{\eps^3}  \right).
		\end{multline}
		Then, to justify that $\varphi$ is integrable, we prove that $\frac{r''}{r^{3/2}}$ and $\frac{(r')^2}{r^{5/2}}$ are integrable. Since we know that $1/r$ decays at the same speed as $\eps^2$, we can equivalently show that $\eps^3r''$ and $\eps^5(r')^2$ are integrable.
		To this aim we fully expand all the terms in \eqref{eq::rprime} and \eqref{eq::rsec}.
		We first deal with \eqref{eq::rprime}, it holds that:
		\begin{align*}
			r'^2&\eps^{5} =
			\left[- \frac{ (\alpha+\beta\lambda)^{2} \left(- \frac{4 \lambda \eps }{ (\alpha+\beta\lambda)^{2}} + 1 + \frac{\left(- {2  (\alpha+\beta\lambda) \eps' } + {2 \alpha'\eps }\right) }{ (\alpha+\beta\lambda)^{2}}\right) \eps' }{2 \sqrt{\eps}}\right.
			\\ &\left.
			+ \frac{ (\alpha+\beta\lambda)^{2}\sqrt{\eps}}{4} \left(- \frac{4 \lambda \eps' }{ (\alpha+\beta\lambda)^{2}} + \frac{8 \lambda \alpha'  \eps }{ (\alpha+\beta\lambda)^{3}} + \frac{2 \left(- \frac{2  (\alpha+\beta\lambda) \eps' }{\eps } + {2 \alpha'\eps }\right) \eps'   }{ (\alpha+\beta\lambda)^{2}}
			\right.\right.
			\\&\left.\left.
			+ \frac{\left(\frac{4  (\alpha+\beta\lambda) \eps'^{2} }{\eps } - {2  (\alpha+\beta\lambda) \eps'' } - {4 \alpha'  \eps' } + {2 \alpha''\eps }\right) }{ (\alpha+\beta\lambda)^{2}} - \frac{2 \left(-{2  (\alpha+\beta\lambda) \eps' } + {2 \alpha'\eps }\right) \alpha'   }{ (\alpha+\beta\lambda)^{3}}\right)
			\right.\\
			&\left. + \frac{ (\alpha+\beta\lambda)}{2} { \left(- \frac{4 \lambda \eps }{ (\alpha+\beta\lambda)^{2}} + 1 + \frac{\left(- {2  (\alpha+\beta\lambda) \eps' } + {2 \alpha'\eps }\right) }{ (\alpha+\beta\lambda)^{2}}\right)} \alpha' \sqrt{\eps}
			\phantom{\frac{\frac{\beta}{\beta}}{\beta}}
			\hspace{-1.7cm}\phantom{\frac{\left( \frac{\frac{a'}{a'}}{a'}\right)}{a^2}{\sqrt{.}}
			}\right]^{2}.
		\end{align*}
		The computations for $\eps^2r''$ are omitted since they are longer but very similar.

		We analyze the integrability of each of the terms above (and those of $\eps^2r''$). By Assumption~\ref{ass:subexp}, $\eps'$, $\eps''$ and $\eps'''$ are integrable, as well as $\alpha'$, $\alpha''$ and $\alpha'''$, this justifies the integrability of most of the terms. We also need $\frac{(\eps')^2}{\eps}$ and $\frac{(\eps')^3}{\eps}$ to be integrable, which holds by Assumption~\ref{ass:subexp}. Overall, $\varphi$ is integrable on $\R_{+}$.
	\end{proof}

	We now state and prove the a result used at the end of the proof of Theorem~\ref{thm::MainResLG}.
	\begin{lemma}\label{lem::remainderterm}
		Under Assumptions~\ref{ass:smooth} and~\ref{ass:subexp}, for all $s\geq 0$, $$\frac{1}{16}\left(\frac{2p'(s)}{p(s)^{3/2}} - \frac{4\lambda\sqrt{\eps(s)}}{(\alpha(s)+\beta\lambda)^{3/2}}\right)^2 = \frac{\lambda^2\eps(s)}{(\alpha(s)+\beta\lambda)^{3}}+o(\eps(s)).$$
	\end{lemma}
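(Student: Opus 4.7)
The plan is to substitute the explicit form of $p$ and $p'$ into the left-hand side, expand the square, and identify which terms are $o(\eps(s))$ using the regularity conditions of Assumption~\ref{ass:subexp}. For brevity set $D(s) = \alpha(s)+\beta\lambda$ (bounded away from $0$ since $\lambda > 0$) and drop the argument $s$. From $p = D/\eps$ I will compute
\begin{equation*}
\frac{2p'}{p^{3/2}} = \frac{2\alpha'\sqrt{\eps}}{D^{3/2}} - \frac{2\eps'}{\sqrt{\eps}\sqrt{D}},
\end{equation*}
so that, after subtracting $4\lambda\sqrt{\eps}/D^{3/2}$, the quantity inside the square becomes
\begin{equation*}
\frac{2(\alpha'-2\lambda)\sqrt{\eps}}{D^{3/2}} - \frac{2\eps'}{\sqrt{\eps D}}.
\end{equation*}

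Next I would square this expression. The three resulting terms are, after dividing by $16$: a ``leading'' term $(\alpha'-2\lambda)^2 \eps / (4D^3)$, a cross term $-(\alpha'-2\lambda)\eps'/(2D^2)$, and a tail term $(\eps')^2/(4\eps D)$. Expanding the leading term splits it further into $\frac{(\alpha')^2\eps}{4D^3} - \frac{\lambda\alpha'\eps}{D^3} + \frac{\lambda^2\eps}{D^3}$; the third piece is exactly the desired main order, and I must show that everything else is $o(\eps)$.

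To control the error, I will invoke the two consequences of Assumption~\ref{ass:subexp} that were already used in the proof of Lemma~\ref{lem::phiintegrable}: namely $\eps'(s) = o(\eps(s))$ and $\alpha'(s) = o(1)$ as $s\to\infty$ (the latter following from $\alpha$ being non-increasing, non-negative, with integrable derivative, so $\alpha$ converges and $\alpha'\to 0$). With $D$ uniformly bounded above and bounded below by $\beta\lambda$, this immediately gives $(\alpha')^2\eps/D^3 = o(\eps)$ and $\lambda \alpha'\eps/D^3 = o(\eps)$. For the cross term, $\eps' = o(\eps)$ and $\alpha'-2\lambda$ bounded yield $(\alpha'-2\lambda)\eps'/D^2 = o(\eps)$. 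Finally, for the tail term I write $(\eps')^2/\eps = \eps(\eps'/\eps)^2 = o(\eps)$, since $\eps'/\eps \to 0$, so $(\eps')^2/(\eps D) = o(\eps)$ as well.

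Assembling these bounds yields precisely $\frac{\lambda^2\eps}{D^3} + o(\eps)$, proving the lemma. I do not expect any genuine obstacle here: the computation is mechanical, and the only subtlety is making sure that the consequences of Assumption~\ref{ass:subexp} used above ($\eps' = o(\eps)$ and $\alpha' \to 0$) are justified, which was already done in Lemma~\ref{lem::phiintegrable}. Should one wish to avoid invoking $\alpha'\to 0$ pointwise, one could instead factor $(\alpha'-2\lambda)^2 = 4\lambda^2(1 + O(\alpha'/\lambda))$ and absorb the $O(\alpha')$ part into the error using only the pointwise boundedness of $\alpha'$ combined with $\alpha'\to 0$ along a sequence, but the streamlined version above is cleaner.
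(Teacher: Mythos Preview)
Your proposal is correct and follows essentially the same route as the paper: both compute $p'$ explicitly, expand the square, isolate the term $\lambda^2\eps/(\alpha+\beta\lambda)^3$, and show the remaining terms are $o(\eps)$ using $\eps'=o(\eps)$ and $\alpha'\to 0$ from Assumption~\ref{ass:subexp}. The only cosmetic difference is that you simplify $2p'/p^{3/2}$ in terms of $\alpha',\eps',D$ \emph{before} squaring, whereas the paper squares first and substitutes afterwards; the resulting terms and the error analysis are identical.
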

	\begin{proof}
		We omit the time dependence on $s\geq 0$ for the sake of readability. Using Assumption~\ref{ass:smooth} we can define and expand the following quantity,
		\begin{align*}
			&\frac{1}{16}\left(\frac{2p'}{p^{3/2}} - \frac{4\lambda\sqrt{\eps}}{(\alpha+\beta\lambda)^{3/2}}\right)^2
			=\frac{\lambda^2\eps}{(\alpha+\beta\lambda)^{3}}  - \frac{p'\lambda\sqrt{\eps}}{p^{3/2}(\alpha+\beta\lambda)^{3/2}} + \frac{(p')^2}{4p^{3}}
			\\&= \frac{\lambda^2\eps}{(\alpha+\beta\lambda)^{3}}
				- \frac{\lambda(\alpha'\eps - (\alpha+\beta\lambda)\eps')}{(\alpha+\beta\lambda)^{3}}
				+ \frac{(\alpha')^2\eps + \frac{(\eps')^2}{\eps}(\alpha+\beta\lambda)^2 - 2 \alpha'\eps'(\alpha+\beta\lambda)}{4(\alpha+\beta\lambda)^{3}}.
		\end{align*}
		Assumption~\ref{ass:subexp}, implies in particular that $\vert\eps'(t)\vert=o(\eps(t))$ and that $\alpha'(t)\to 0$, which we use in the equality above to obtain the desired conclusion.
	\end{proof}


	\section{Additional Experimental Details}\label{supp::Exp}
	We used Euler discretization schemes with fixed step-size $\gamma>0$, for approximating the solutions of the three ODEs considered in Section~\ref{sec::exp}. For a trajectory $x$, at times $t_k=\gamma k$, $k\in\N$, we denote $x(t_k)\stackrel{\textrm{def}}{=} x^{(k)}$.
	We approximated \eqref{eq::CN} by explicit discretization, $\forall k\in\N$,
	\begin{equation}\label{eq::discNewton}
		x_N^{(k+1)} = x_N^{(k)} - \gamma \left[\beta\Hf(x_N^{(k)})\right]^{-1}\gf(x_N^{(k)}).
	\end{equation}
	Then, defining $\eps_k = \eps(t_k)$ and $\alpha_k = \alpha(t_k)$, \eqref{eq::LM} and \eqref{eq::VMDINAVD} are obtained via Euler semi-implicit discretization. The solution of \eqref{eq::LM} is approximated by,
	\begin{equation}\label{eq::discLM}
		x_{LM}^{(k+1)} = x_{LM}^{(k)} - \gamma \left[\alpha_k I_n+\beta\Hf(x_{LM}^{(k)})\right]^{-1}\gf(x_{LM}^{(k)}),
	\end{equation}
	where $I_n$ is the identity matrix on $\R^n$. The solution of \eqref{eq::VMDINAVD} is similarly,
	\begin{equation}\label{eq::discVM}
		x^{(k+1)} = x^{(k)} +  \left[(\eps_k+\gamma\alpha_k) I_n+\gamma\beta\Hf(x^{(k)})\right]^{-1}\left(\eps_k(x^{(k)}-x^{(k-1)})-\gamma^2\gf(x^{(k)})\right).
	\end{equation}
	As sanity check, one can see that for $\eps_k=0$, \eqref{eq::discVM} is equivalent to \eqref{eq::discLM}, which is itself equivalent to \eqref{eq::discNewton} when $\alpha_k=0$.

	\FloatBarrier
	\bibliographystyle{plainnat}
	\bibliography{biblio.bib}

\end{document}